\newtheorem{thm}{Theorem}[section]
\newtheorem{lem}[thm]{Lemma}
\newtheorem{prp}[thm]{Proposition}
\newtheorem{cor}[thm]{Corollary}
\newtheorem{dfn}[thm]{Definition}
\newtheorem{baseexample}[thm]{Example} 
\newtheorem{baseremark}[thm]{Remark} 
\newenvironment{example}
{\begin{baseexample}\rm}{\end{baseexample}}
\newenvironment{remark}
{\begin{baseremark}\rm}{\end{baseremark}}
\newcommand{\rem}[1]{}
\newcommand{\Step}[1]{\noindent {\bf Step #1.}} 
\newcommand{\F}{\mathbb{F}}
\newcommand{\N}{\mathbb{N}}
\newcommand{\Z}{\mathbb{Z}}
\newcommand{\calCapital}{
\newcommand{\calA}{{\mathcal{A}}}
\newcommand{\calB}{{\mathcal{B}}}
\newcommand{\calC}{{\mathcal{C}}}
\newcommand{\calD}{{\mathcal{D}}}
\newcommand{\calE}{{\mathcal{E}}}
\newcommand{\calF}{{\mathcal{F}}}
\newcommand{\calG}{{\mathcal{G}}}
\newcommand{\calH}{{\mathcal{H}}}
\newcommand{\calI}{{\mathcal{I}}}
\newcommand{\calJ}{{\mathcal{J}}}
\newcommand{\calK}{{\mathcal{K}}}
\newcommand{\calL}{{\mathcal{L}}}
\newcommand{\calM}{{\mathcal{M}}}
\newcommand{\calN}{{\mathcal{N}}}
\newcommand{\calO}{{\mathcal{O}}}
\newcommand{\calP}{{\mathcal{P}}}
\newcommand{\calQ}{{\mathcal{Q}}}
\newcommand{\calR}{{\mathcal{R}}}
\newcommand{\calS}{{\mathcal{S}}}
\newcommand{\calT}{{\mathcal{T}}}
\newcommand{\calU}{{\mathcal{U}}}
\newcommand{\calV}{{\mathcal{V}}}
\newcommand{\calW}{{\mathcal{W}}}
\newcommand{\calX}{{\mathcal{X}}}
\newcommand{\calY}{{\mathcal{Y}}}
\newcommand{\calZ}{{\mathcal{Z}}}
}
\newcommand{\bbCapital}{
\newcommand{\bbA}{{\mathbb{A}}}
\newcommand{\bbB}{{\mathbb{B}}}
\newcommand{\bbC}{{\mathbb{C}}}
\newcommand{\bbD}{{\mathbb{D}}}
\newcommand{\bbE}{{\mathbb{E}}}
\newcommand{\bbF}{{\mathbb{F}}}
\newcommand{\bbG}{{\mathbb{G}}}
\newcommand{\bbH}{{\mathbb{H}}}
\newcommand{\bbI}{{\mathbb{I}}}
\newcommand{\bbJ}{{\mathbb{J}}}
\newcommand{\bbK}{{\mathbb{K}}}
\newcommand{\bbL}{{\mathbb{L}}}
\newcommand{\bbM}{{\mathbb{M}}}
\newcommand{\bbN}{{\mathbb{N}}}
\newcommand{\bbO}{{\mathbb{O}}}
\newcommand{\bbP}{{\mathbb{P}}}
\newcommand{\bbQ}{{\mathbb{Q}}}
\newcommand{\bbR}{{\mathbb{R}}}
\newcommand{\bbS}{{\mathbb{S}}}
\newcommand{\bbT}{{\mathbb{T}}}
\newcommand{\bbU}{{\mathbb{U}}}
\newcommand{\bbV}{{\mathbb{V}}}
\newcommand{\bbW}{{\mathbb{W}}}
\newcommand{\bbX}{{\mathbb{X}}}
\newcommand{\bbY}{{\mathbb{Y}}}
\newcommand{\bbZ}{{\mathbb{Z}}}
}
\newcommand{\catCapital}{
\newcommand{\catA}{{\mathscr{A}}}
\newcommand{\catB}{{\mathscr{B}}}
\newcommand{\catC}{{\mathscr{C}}}
\newcommand{\catD}{{\mathscr{D}}}
\newcommand{\catE}{{\mathscr{E}}}
\newcommand{\catF}{{\mathscr{F}}}
\newcommand{\catG}{{\mathscr{G}}}
\newcommand{\catH}{{\mathscr{H}}}
\newcommand{\catI}{{\mathscr{I}}}
\newcommand{\catJ}{{\mathscr{J}}}
\newcommand{\catK}{{\mathscr{K}}}
\newcommand{\catL}{{\mathscr{L}}}
\newcommand{\catM}{{\mathscr{M}}}
\newcommand{\catN}{{\mathscr{N}}}
\newcommand{\catO}{{\mathscr{O}}}
\newcommand{\catP}{{\mathscr{P}}}
\newcommand{\catQ}{{\mathscr{Q}}}
\newcommand{\catR}{{\mathscr{R}}}
\newcommand{\catS}{{\mathscr{S}}}
\newcommand{\catT}{{\mathscr{T}}}
\newcommand{\catU}{{\mathscr{U}}}
\newcommand{\catV}{{\mathscr{V}}}
\newcommand{\catW}{{\mathscr{W}}}
\newcommand{\catX}{{\mathscr{X}}}
\newcommand{\catY}{{\mathscr{Y}}}
\newcommand{\catZ}{{\mathscr{Z}}}
}
\newcommand{\veps}{\varepsilon}
\newcommand{\vphi}{\varphi}
\newcommand{\derives}{\Longrightarrow}
\newcommand{\suchthat}{\,:\,}
\newcommand{\where}{\,|\,}
\newcommand{\quo}[1]{\overline{#1}}
\DeclareMathOperator{\Cent}{Cent} %
\DeclareMathOperator{\Char}{char} %
\DeclareMathOperator{\End}{End} %
\DeclareMathOperator{\Hom}{Hom} %
\DeclareMathOperator{\id}{id} %
\DeclareMathOperator{\im}{im} %
\DeclareMathOperator{\ind}{ind} %
\DeclareMathOperator{\Jac}{Jac} %
\DeclareMathOperator{\Mor}{Mor} %
\newcommand{\op}{\mathrm{op}} %
\newcommand{\nGL}[2]{\mathrm{GL}_{#2}({#1})}
\newcommand{\nMat}[2]{\mathrm{M}_{#2}(#1)}
\newcommand{\invlim}{\underleftarrow{\lim}\,}
\newcommand{\units}[1]{{#1^\times}}
\newcommand{\rproj}[1]{{\mathrm{proj}}\textrm{-}{#1}}
\newcommand{\Quad}[2][]{\mathrm{UQ}^{#1}(#2)}     
\newcommand{\TDA}[2][]{{\mathrm{A\widetilde{r}}}_{2#1}({#2})}       
\renewcommand{\rproj}[1]{\catP(#1)}               
\newcommand{\lAd}[1]{L_{#1}}                      
\newcommand{\rdnorm}[1][]{\mathrm{Nrd}_{#1}}      
\newcommand{\Sesq}[1]{\calS_{#1}}                 
\newcommand{\efinv}[3]{({}_{#1}{#3}_{#2})^{\circ}}
\numberwithin{equation}{section}
\renewcommand{\Step}[1]{\noindent {\it Step #1.}} 
\title[Witt's Extension Theorem]{Witt's Extension Theorem for Quadratic Spaces over Semiperfect Rings}
\author{Uriya A.\ First}
\address{Hebrew University of Jerusalem, Israel.}
\date{\today}
\thanks{
The  author was  supported by  an ERC grant \#226135 and an SNFS grant \#IZK0Z2\_151061.
}
\keywords{quadratic form, Witt's Theorem, reflection, isometry group, semiperfect ring, semilocal ring, Dickson's invariant,
hermitian category, sesquilinear form.}
\begin{document}

\maketitle

\begin{abstract}
    We prove that every  isometry of between (not-ne\-ces\-sari\-ly orthogonal) summands
    of a unimodular quadratic space over a \emph{semiperfect} ring can be extended an isometry of the whole quadratic space.
    The same result was proved by Reiter for the broader class of \emph{semilocal} rings, but with certain restrictions on
    the base modules, which cannot
    be removed in general.

    Our result implies that unimodular quadratic spaces over semiperfect rings cancel from orthogonal sums.
    This improves a  cancellation result  of Quebbemann, Scharlau and Schulte, which applies
    to quadratic spaces over \emph{hermitian categories}.
    Combining this with other known results yields further  cancellation theorems. For instance,
    we prove cancellation of (1) \emph{systems} of sesquilinear forms over
    henselian local rings, and (2) \emph{non-unimodular} hermitian forms over (arbitrary) valuation rings.

    Finally, we determine the group generated by the reflections
    of a unimodular quadratic space over a semiperfect ring.
\end{abstract}

\section{Introduction}

    Let $F$ be a field of characteristic not $2$ and let $(V,q)$ be a nondegenerate quadratic space
    over $F$. The following theorem, known as \emph{Witt's Theorem} or \emph{Witt's Extension Theorem}, is
    fundamental in the theory of quadratic forms.

    \begin{thm}[Witt]
        Let $V_1,V_2$ be subspaces of $V$ and let $\psi:q|_{V_1}\to q|_{V_2}$ be an isometry. Then
        $\psi$ extends to an isometry $\psi'$ of $q$. Furthermore, $\psi'$ is a product of reflections.
    \end{thm}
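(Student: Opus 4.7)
The plan is to induct on $\dim V_1$, producing the extending isometry as an explicit product of reflections. Recall that for the polar form $B$ of $q$ and any anisotropic $v \in V$, the reflection $\tau_v(x) = x - \tfrac{2B(v,x)}{q(v)}\,v$ is an involutive isometry of $(V,q)$; these will be the building blocks.

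For the base case $\dim V_1 = 1$, I would take a generator $v$ of $V_1$ and set $w = \psi(v)$, so that $q(v) = q(w)$. If $v$ is anisotropic, the identity $q(v-w) + q(v+w) = 4q(v)$ (where $\Char F \neq 2$ is used crucially) guarantees that at least one of $v \pm w$ is anisotropic, and a short computation shows that $\tau_{v-w}(v) = w$ when $q(v-w) \neq 0$, while $\tau_w \tau_{v+w}(v) = w$ when $q(v+w) \neq 0$. If $v$ is isotropic, I would invoke nondegeneracy of $V$ to choose an anisotropic auxiliary $u$ with $v+u$ and $w+u$ both anisotropic, and then apply the previous subcase twice, to the pairs $(v,v+u)$ and $(v+u,w)$.

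For the inductive step, I would pick a nonzero $v \in V_1$, use the base case to construct a product of reflections $\sigma$ with $\sigma(v) = \psi(v)$, and then consider $\sigma^{-1}\psi$, which fixes $v$ and restricts to an isometry between $V_1 \cap v^\perp$ and $\sigma^{-1}(V_2) \cap v^\perp$. The inductive hypothesis, applied to this restriction, should yield an extension $\tilde\sigma$ to an isometry of $V$; after arranging that $\tilde\sigma$ fixes $v$ itself, the composite $\sigma\tilde\sigma$ is the desired extension of $\psi$ as a product of reflections.

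The main obstacle will be the isotropic case when $v \in V_1$ satisfies $q(v) = 0$: then $v \in v^\perp$, the form on $v^\perp$ is degenerate, and the inductive hypothesis does not apply cleanly in the form stated above. My plan for handling this is to first embed $v$ in a hyperbolic plane $H \subseteq V$ (using nondegeneracy of $q$), complete $v$ to a basis of $H$ with an anisotropic vector, and thereby reduce the isotropic case to the anisotropic one treated above. This is precisely where the nondegeneracy hypothesis on $(V,q)$ is used in an essential way.
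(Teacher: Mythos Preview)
First, a framing note: the paper does not actually prove this statement. It is quoted in the introduction as Witt's classical theorem over a field, serving as motivation for the paper's own generalizations to semiperfect rings. So there is no direct ``paper's proof'' to compare against, although the paper's Theorem~\ref{TH:Witt-I} specialized to a field of characteristic $\neq 2$ does recover it.

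Your anisotropic base case is correct and standard. The isotropic base case, however, has a genuine gap: you propose to apply the anisotropic subcase to the pairs $(v,v+u)$ and $(v+u,w)$, but that subcase requires the two vectors to have the same $q$-value (it comes from an isometry), and here $q(v)=0$ while $q(v+u)\neq 0$ by your own choice of $u$. So the step fails as written. Your fallback plan (embed $v$ in a hyperbolic plane and ``reduce to the anisotropic case'') is in the right spirit but is not yet a proof: you have not said what the reduction actually is. The inductive step has a related loose end that you partially flag: even when $v$ is anisotropic, the extension $\tilde\sigma$ produced by the inductive hypothesis on $V_1\cap v^\perp$ need not fix $v$, and ``arranging'' this requires either running the induction inside the nondegenerate space $v^\perp$ (a stronger hypothesis) or a separate argument. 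The cleanest fixes are either to first reduce to the case where $V_1$ is nondegenerate (so an anisotropic $v\in V_1$ always exists and one inducts inside $v^\perp$), or to use a uniform two-reflection construction as in the paper's Lemma~\ref{LM:action-of-reflections}(ii): when $q(x-y)=0$, find \emph{any} $z$ with $q(z)\neq 0$ such that $h_\beta(y,\,y-s_z(x))\neq 0$, and then $s_{y-s_z(x)}\,s_z$ sends $x$ to $y$ regardless of whether $x$ is isotropic.
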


    Among the theorem's consequences are cancellation of nondegenerate quadratic spaces and
    the fact that $O(V,q)$, the isometry group of $(V,q)$, acts transitively on maximal totally isotropic subspaces
    of $V$.

\medskip

    The works of Bak \cite{Bak69}, Wall \cite{Wall70} and others have led to defining a notion
    of quadratic forms over arbitrary (non-commutative) rings, and also to an appropriate definition
    of reflections (see \cite{Kne69}, \cite{Knes70}, \cite{Reiter75}).
    In this context, Witt's Extension Theorem was generalized by Reiter \cite{Reiter75} to semilocal rings, but with certain
    restrictions on the quadratic spaces (see also \cite{Kne69} and \cite{Knes70} for earlier results).
    Cancellation of unimodular quadratic spaces was likewise generalized to various families of semilocal rings $A$ including the cases
    where $A$ is commutative (\cite{Roy68}, \cite{Kne69} or
    \cite[\S3.4.3]{Keller88}), local (\cite[Rm.~3.4.2]{Keller88},
    for instance), or $A=\invlim \{A/\Jac(A)^n\}_{n\in\N}$
    (\cite[\S3.4]{QuSchSch79}; this case includes all one-sided artinian rings).
    However, despite the previous evidence, Keller
    \cite{Keller88} has demonstrated that cancellation fails over arbitrary semilocal rings, implying that
    the restrictions in Reiter's Theorem cannot be removed in general.
    (Most of the results mentioned here can also be found in \cite[Ch.~VI]{Kn91}.)
    
\medskip

    In this paper, we restrict our attention to a family of semilocal rings called \emph{semiperfect} rings, and study to
    what extent Witt's Extension Theorem holds in this setting. Recall
    that a ring $A$ is called \emph{semiperfect} if it is semilocal and its Jacobson radical, $\Jac(A)$,
    is idempotent lifting. For example, local rings and semilocal rings satisfying $A=\invlim \{A/\Jac(A)^n\}_{n\in\N}$
    are semiperfect. See \ref{subsection:semiperfect} below for further examples and details.

    Let $(P,[\beta])$ be a unimodular quadratic space over a semiperfect ring (the definition is recalled
    below) and let $Q,S$ be summands of $P$. Our main results are:
    \begin{enumerate}
        \item[(1)] Every isometry $(Q,[\beta|_Q])\to (S,[\beta|_S])$ extends to an isometry of $(P,[\beta])$
        (Corollary~\ref{CR:main-I}, Theorem~\ref{TH:Witt-II}).
        \item[(2)] Under mild assumptions, every isometry of $(P,[\beta])$ is a product of \emph{quasi-reflections}
        (Corollary~\ref{CR:main-III}, Theorem~\ref{TH:Witt-I}).
        \item[(3)] We determine the subgroup of $O(P,[\beta])$ generated by \emph{reflections} (Theorem~\ref{TH:gen-by-reflections}).
        Apart from an obvious exception
        in which there are no reflections, this subgroup is always of finite index in $O(P,[\beta])$.
    \end{enumerate}
    The proofs are based on Reiter's ideas with certain improvements. In particular, we
    generalize Reiter's \emph{$e$-reflections} (see \cite{Reiter75}).
    We also stress that (1)--(3)  hold without assuming $2$ is invertible.

\medskip

    Our results imply that unimodular quadratic spaces over semiperfect rings cancel from orthogonal sums.
    (Note that the base ring in Keller's counterexample \cite[\S2]{Keller88} is semilocal but not semiperfect.)
    This in turn leads to other cancellation theorems as follows: In \cite{BayerFain96}, \cite{BayerMold12}
    and \cite{BayFiMol13}, it was shown that \emph{systems} of (not-necessarily unimodular) \emph{sesquilinear} forms can be treated as
    (single) unimodular hermitian forms over a different base ring. Thus, cancellation holds
    when this base ring is semiperfect. Using this, we show that cancellation holds for
    \begin{enumerate}
        \item[(a)] arbitrary (i.e.\ not-necessarily unimodular) hermitian forms over involutary valuation rings (Corollary~\ref{CR:non-unimod}),
        \item[(b)] systems of sesquilinear forms over involutary henselian valuation rings (Corollary~\ref{CR:systems-II}).
    \end{enumerate}
    We also strengthen a cancellation theorem of Quebbeman, Scharlau and Schulte \cite[\S3.4]{QuSchSch79}
    which applies to quadratic spaces over \emph{hermitian categories} (Corollary~\ref{CR:cancellation-herm-cats}).
    Specifically, the cancellation of \cite[\S3.4]{QuSchSch79} assumes that the underlying hermitian category satisfies:
    (i) all idempotents split, (ii) every object is the direct sum of objects with local endomorphism ring, and (iii)
    if $A$ is the endomorphism ring of an object, then $A=\invlim \{A/\Jac(A)^n\}_{n\in\N}$.
    We show that cancellation holds even without assuming condition (iii).

\medskip

    The paper is organized as follows:
    In section~\ref{section:preliminaries}, we recall the definitions of quadratic forms over rings and several results to be used
    throughout. Section~\ref{section:reflection} introduces quasi-reflections and reflections. In
    section~\ref{section:Witt-thm}, we prove our version of Witt's Extension Theorem and discuss its applications.
    Finally, in section~\ref{section:gen-by-ref}, we describe the group spanned by the reflections of a unimodular
    quadratic space (over a semiperfect ring).

\section{Preliminaries}
\label{section:preliminaries}

    This section collects several preliminary
    topics that will be used throughout the paper: We recall quadratic forms over unitary rings,  several facts concerning them,
    a notion of orthogonality for unitary rings, and several facts about semiperfect rings.

\subsection{Quadratic Forms}
\label{subsection:unitary-rings}

    We start with recalling quadratic forms. The definitions
    go back to Bak \cite{Bak69} and Wall \cite{Wall70}. See \cite{Bass73AlgebraicKThyIII},
    \cite[Ch.~7]{SchQuadraticAndHermitianForms}, or \cite{Kn91}
    for an extensive discussion.

\medskip

    Let $A$ be a ring. An \emph{anti-structure} on $A$ consists of a pair $(\sigma,u)$ such
    that $\sigma:A\to A$ is an anti-automorphism (written exponentially) and $u\in \units{A}$
    satisfies $u^\sigma u=1$ and $a^{\sigma\sigma}=uau^{-1}$ for all $a\in A$.

    Denote by $\rproj{A}$ the category of finitely generated projective right $A$-modules.
    As usual, a \emph{sesquilinear space} is a pair $(P,\beta)$
    such that $P\in\rproj{A}$ and $\beta:P\times P\to A$ is a biadditive map
    satisfying
    $
    \beta(xa,yb)=a^\sigma\beta(x,y)b
    $ for all  $x,y\in P$, $a,b\in A$.
    In this case, we call $\beta$ a \emph{sesquilinear form}.
    The form $\beta$ is called \emph{$u$-hermitian} if it also satisfies
    $
    \beta(x,y)=\beta(y,x)^\sigma u
    $.

    We say that $(P,\beta)$ is \emph{unimodular} if the map $\lAd{\beta}:P\to P^*:=\Hom_A(P,A)$
    given by sending $x\in P$ to $[y\mapsto \beta(x,y)]\in P^*$ is an isomorphism.
    Note that $P^*$ can be made into a \emph{right} $A$-module by setting
    $
    (f a)x=a^\sigma(fx)
    $
    for all $f\in P^*$, $a\in A$, $x\in P$. This makes $\lAd{\beta}$ is a homomorphism of $A$-modules.
    There is a \emph{natural isomorphism} $\omega_P:P\to P^{**}$ give by
    $
    (\omega_P f)x=(fx)^\sigma u
    $
    for all $f\in P^*$, $x\in P$.

\medskip

    Next, set
    $
    \Lambda^{\min}=\{a-a^\sigma u\where a\in A\}$
    and
    $\Lambda^{\max}= \{a\in A\suchthat a^\sigma u=-a\}
    $.
    A \emph{form parameter}
    (for $(A,\sigma,u)$) consists of an additive group $\Lambda$ such that
    \[\Lambda^{\min}\subseteq\Lambda\subseteq\Lambda^{\max}\qquad \text{and}\qquad
    a^\sigma\Lambda a\subseteq\Lambda\qquad\forall\, a\in A\ .
    \]
    In this case, the quartet $(A,\sigma,u,\Lambda)$ is called a \emph{unitary ring}.

\medskip

    For $P\in\rproj{A}$,  let $\Sesq{P}$ denote the abelian group of sesquilinear
    forms on $P$, and let $\Lambda_P$ denote
    the subgroup consisting of sesquilinear forms $\gamma\in\Sesq{P}$ satisfying
    $\gamma(x,y)=-\gamma(y,x)^\sigma u$ and $\gamma(x,x)\in\Lambda$ for all $x,y\in P$.
    The image of $\beta\in \Sesq{P}$ in $\Sesq{P}/\Lambda_P$ is denoted $[\beta]$.

    A \emph{quadratic space} (over $(A,\sigma,u,\Lambda)$) is a pair $(P,[\beta])$ with $P\in\rproj{A}$
    and $[\beta]\in\Sesq{P}/\Lambda_P$. Associated with $[\beta]$ are the $u$-hermitian form
    \[h_\beta(x,y)=\beta(x,y)+\beta(y,x)^\sigma u\]
    and the
    quadratic map $\hat{\beta}:P\to A/\Lambda$
    given by
    \[\hat{\beta}(x)=\beta(x,x)+\Lambda\ .\]
    Both $h_\beta$ and $\hat{\beta}$ are determined by the class $[\beta]$, and conversely,
    $[\beta]$ is determined by $h_\beta$ and $\hat{\beta}$.
    We also have
    \begin{equation}\label{EQ:beta-h-connection}
    \hat{\beta}(x+y)=\hat{\beta}(x)+\hat{\beta}(y)+h_\beta(x,y)\qquad\forall\, x,y\in P\ .
    \end{equation}
    We say that $(P,[\beta])$ is \emph{unimodular} if $h_\beta$ is a unimodular
    $u$-hermitian form, i.e.\ if $\lAd{[\beta]}:=\lAd{h_\beta}$ is an isomorphism ($\beta$ itself may be non-unimodular).

\medskip

    Isometries between quadratic (resp.\ sesquilinear, $u$-hermitian) spaces
    are defined in the standard way (cf.~\cite[\S{}I.2.2, \S{}I.5.2]{Kn91}).
    We let $O(P,[\beta])$ denote the group of  isometries  of $(P,[\beta])$.
    The category of \emph{unimodular}  quadratic spaces over $(A,\sigma,u,\Lambda)$
    (with isometries as morphisms)
    is denoted by $\Quad[u,\Lambda]{A,\sigma}$.

    \begin{remark}
        When $2\in \units{A}$, we have $\Lambda^{\min}=\Lambda^{\max}$, so there
        is only one form parameter $\Lambda$. Furthermore, in this case,
        $\Quad[u,\Lambda]{A,\sigma}$ is isomorphic to the
        the category of unimodular $u$-hermitian forms over $(A,\sigma,u)$.
        Indeed, $[\beta]$
        can be recovered from $h_\beta$ via $[\beta]=[\frac{1}{2}h_\beta]$.
    \end{remark}

    \begin{remark}
        The ``classical'' notion of  a quadratic space
        over a commutative ring $A$ (see \cite{Roy68}, for instance) occurs in the special case $\sigma=\id_A$, $u=1$ and $\Lambda=0$.
        Then, the quadratic map $\hat{\beta}$ determines $h_\beta$ by \eqref{EQ:beta-h-connection}.
    \end{remark}

    \begin{remark}\label{RM:herm-category}
        The triple $(\rproj{A},*,\{\omega_P\}_{P\in\rproj{A}})$
        is a \emph{hermitian category} and the pair $(1,\underline{\Lambda}:=\{\Lambda_P\}_{P\in\rproj{A}})$
        is a \emph{form parameter} (see \cite{QuSchSch79} or \cite[\S{}II.2]{Kn91} for the definitions).
        There is a one-to-one correspondence between quadratic spaces over $(A,\sigma,u,\Lambda)$
        and quadratic spaces over $(\rproj{A},1,\underline{\Lambda})$
        given by $(P,[\beta])\mapsto (P,[L_\beta])$.
    \end{remark}

\subsection{Conjugation and Transfer}

    We now introduce two well-known procedures that we refer to as \emph{conjugation} and
    \emph{$e$-transfer}. They allow one to alter a unitary ring $(A,\sigma,u,\Lambda)$
    while maintaining data about isometries between quadratic forms (isometry
    groups in particular). We shall use these manipulations several times in the sequel.

    \begin{prp}[``Conjugation'']\label{PR:conjugation-of-unitary-rings}
        Let $v\in\units{A}$. Define
        $(\sigma',u',\Lambda')$ by
        \[
        a^{\sigma'}=va^\sigma v^{-1},\qquad u'=v(v^{\sigma})^{-1}u,\qquad \Lambda'=v\Lambda\ .
        \]
        We call $(\sigma',u',\Lambda')$ the \emph{conjugation} of $(\sigma,u,\Lambda)$ by $v$.
        Then $(A,\sigma',u',\Lambda')$ is a unitary ring and $\Quad[u,\Lambda]{A,\sigma}\cong \Quad[u',\Lambda']{A,\sigma'}$.
    \end{prp}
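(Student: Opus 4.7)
The plan is to verify first that the quartet $(A,\sigma',u',\Lambda')$ satisfies the axioms of a unitary ring, then produce an explicit functor $\Quad[u,\Lambda]{A,\sigma}\to\Quad[u',\Lambda']{A,\sigma'}$ implementing the claimed isomorphism of categories, together with an obvious inverse.

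For the unitary-ring axioms, I would proceed as follows. That $\sigma'$ is an anti-automorphism is immediate since it is the composition of $\sigma$ with conjugation by $v$. To check $(u')^{\sigma'}u'=1$ and $a^{\sigma'\sigma'}=u'a(u')^{-1}$, one expands using only the identities $u^\sigma u=1$ and $a^{\sigma\sigma}=uau^{-1}$; the key auxiliary computation is $(v^\sigma)^{-\sigma}=(v^{\sigma\sigma})^{-1}=uv^{-1}u^{-1}$, after which both identities fall out by cancellation. For the form parameter, the crucial observation is that left multiplication by $v$ carries $\Lambda^{\min}$ onto $(\Lambda')^{\min}$ and $\Lambda^{\max}$ onto $(\Lambda')^{\max}$. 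Indeed, writing an element of $A$ as $vc$, one computes $vc-(vc)^{\sigma'}u' = v(c-c^\sigma u)$, which gives $(\Lambda')^{\min}=v\Lambda^{\min}$; an analogous substitution in the defining condition of $\Lambda^{\max}$ gives $(\Lambda')^{\max}=v\Lambda^{\max}$. Hence $\Lambda^{\min}\subseteq\Lambda\subseteq\Lambda^{\max}$ translates to the corresponding chain for $\Lambda'=v\Lambda$, and the stability condition $a^{\sigma'}\Lambda'a=va^\sigma v^{-1}\cdot v\Lambda\cdot a=v(a^\sigma\Lambda a)\subseteq v\Lambda$ is immediate.

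For the category equivalence, define a functor $F$ that is the identity on modules and on morphisms and sends $(P,[\beta])$ to $(P,[v\beta])$, where $(v\beta)(x,y):=v\cdot\beta(x,y)$. Sesquilinearity relative to $\sigma'$ is the routine check $(v\beta)(xa,yb)=va^\sigma\beta(x,y)b=a^{\sigma'}v\beta(x,y)b$. One then verifies that $F$ carries $\Lambda_P$ isomorphically onto $\Lambda'_P$ (using $v\beta(y,x)^{\sigma'}u'=v\beta(y,x)^\sigma u$, which collapses $v^\sigma(v^\sigma)^{-1}$), so the class $[v\beta]$ is well-defined and $F$ is compatible with isometries. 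Finally, $h_{v\beta}=vh_\beta$ and left multiplication by the unit $v$ is an $A$-module automorphism of $A$, whence $\lAd{v\beta}$ is an isomorphism precisely when $\lAd{\beta}$ is, so unimodularity is preserved. The functor with $v$ replaced by $v^{-1}$ and $(\sigma,u,\Lambda)$ replaced by $(\sigma',u',\Lambda')$ gives the two-sided inverse.

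No single step is genuinely difficult; the only real obstacle is disciplined bookkeeping. The most error-prone point is checking $(u')^{\sigma'}u'=1$ and confirming that the new form parameter $\Lambda'_P$ on each module $P$ coincides with $v\Lambda_P$, because both involve simultaneously the transformed anti-automorphism and the transformed unit. I would organize these as two short lemmas before assembling the main statement, so that the functoriality of $F$ reduces to substitution.
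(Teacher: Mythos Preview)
Your proposal is correct and follows precisely the approach sketched in the paper: verify the unitary-ring axioms by direct computation and build the equivalence via $(P,[\beta])\mapsto(P,[v\beta])$, with isometries sent to themselves. The paper compresses all of this into ``everything follows by straightforward computation'' and a citation to Reiter, whereas you have spelled out the individual checks (including the key identity $(v^\sigma)^{-\sigma}=uv^{-1}u^{-1}$ and the translation $(\Lambda')^{\min}=v\Lambda^{\min}$, $(\Lambda')^{\max}=v\Lambda^{\max}$), so your write-up is simply a more detailed version of the same argument.
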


    \begin{proof}
        This proposition is essentially \cite[Lm.~1.6]{Reiter75}; everything follows by  straightforward computation.
        The categorical equivalence is constructed as follows:
        For any $P\in\rproj{A}$ and $\beta\in\Sesq{P}$, define $v\beta:P\times P\to A$
        by
        \[
        (v\beta)(x,y)=v\cdot \beta(x,y)\ .
        \]
        Then $(P,v\beta)$ is s sesquilinear
        form over $(A,\sigma')$, and the
        assignment $(P,[\beta])\mapsto (P,[v\beta])$ defines  the required categorical isomorphism (isometries are mapped to themselves).
    \end{proof}

    \begin{prp}[``$e$-transfer'']\label{PR:transfer-special-case}
        Let $e\in A$ be an idempotent satisfying $e^\sigma=e$ and $AeA=A$.
        For every sesquilinear form $\beta: P\times P\to A$, denote by $\beta_e$ the restriction
        of $\beta$ to $Pe\times Pe$.
        Then:
        \begin{enumerate}
            \item[(i)] $(B,\tau,v,\Gamma):=(eAe,\sigma|_{eAe},eu,e\Lambda e)$
            is a unitary ring.
            \item[(ii)] $(Pe,[\beta_e])$ is a quadratic space over $(B,\tau,v,\Gamma)$.
            It is unimodular if and only if $(P,[\beta])$ is unimodular.
            \item[(iii)] The map $[\beta]\mapsto [\beta_e]:\Sesq{P}/\Lambda_P\to\Sesq{Pe}/\Lambda_{Pe}$
            is an abelian group isomorphism. In particular, $[\beta]=[0]$ $\iff$ $[\beta_e]=[0]$.
            \item[(iv)] The assignment $(P,[\beta])\mapsto (Pe,[\beta_e])$, called \emph{$e$-transfer},
            gives rise to an equivalence of categories $\Quad[u,\Lambda]{A,\sigma}\sim\Quad[v,\Gamma]{B,\tau}$.
        \end{enumerate}
    \end{prp}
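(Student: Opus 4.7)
My plan is to lift the Morita equivalence $F = (-)e \colon \rproj{A} \to \rproj{B}$, where $B := eAe$ (which is an equivalence because $AeA = A$), to the hermitian setting using the condition $e^\sigma = e$. The crucial preliminary observation is that $e$ and $u$ commute: applying $a^{\sigma\sigma} = uau^{-1}$ to $a = e$ and using $e^{\sigma\sigma} = (e^\sigma)^\sigma = e^\sigma = e$ yields $ueu^{-1} = e$, i.e.\ $eu = ue$. Every subsequent computation rests on this identity.

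For part (i), I would verify the axioms for $(B,\tau,v,\Gamma)$ by direct computation using $eu = ue$: the restriction $\tau = \sigma|_B$ is an anti-automorphism of $B$ since $(eae)^\sigma = ea^\sigma e \in B$; the element $v = eu = eue$ lies in $B$, satisfies $v^\tau v = u^\sigma e \cdot eu = u^\sigma u e = e = 1_B$, and has $B$-inverse $eu^{-1}$; and the relation $a^{\tau\tau} = vav^{-1}_B$ for $a \in B$ reduces, after commuting $e$ past $u$, to $a^{\sigma\sigma} = uau^{-1}$. For the form parameter $\Gamma = e\Lambda e$, the inclusions $\Gamma^{\min}_B \subseteq \Gamma \subseteq \Gamma^{\max}_B$ and $b^\tau \Gamma b \subseteq \Gamma$ follow by sandwiching the corresponding relations for $\Lambda$ between $e$'s, for instance $eae - (eae)^\tau v = e(a - a^\sigma u)e \in e\Lambda^{\min}e \subseteq \Gamma$. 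For part (ii), the sesquilinearity of $\beta_e$ over $(B,\tau)$ is immediate, and the implication $\gamma \in \Lambda_P \Rightarrow \gamma_e \in \Lambda_{Pe}$ uses $eu = ue$ together with $\gamma_e(x,x) = e\gamma(x,x)e \in e\Lambda e = \Gamma$, so $[\beta] \mapsto [\beta_e]$ is well-defined.

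For part (iii), I would identify $\Sesq{P}$ with $\Hom_A(P, P^*)$ via $\beta \mapsto \lAd{\beta}$, apply $F$ to get $\Hom_A(P, P^*) \cong \Hom_B(Pe, P^* e)$, and then identify $P^* e \cong (Pe)^*_B$ via the restriction map $f \mapsto f|_{Pe}$ (whose inverse extends $g \colon Pe \to B$ along the Morita identification $P \cong Pe \otimes_B eA$). The composition is a natural abelian group isomorphism $\Sesq{P} \cong \Sesq{Pe}$, and one verifies it matches the concrete assignment $\beta \mapsto \beta_e$; since $\Lambda_P$ corresponds to $\Lambda_{Pe}$ under this bijection (by (ii) applied in both directions), the induced map on quotients is an isomorphism. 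Unimodularity transfers because, under the above identification, $\lAd{h_{\beta_e}} = F(\lAd{h_\beta})$, and an equivalence of categories preserves and reflects isomorphisms. Part (iv) then follows: the assignment $(P,[\beta]) \mapsto (Pe,[\beta_e])$ is fully faithful and essentially surjective because $F$ is, with isometries corresponding to isometries via (iii). The hard part will be carefully verifying that the identification $P^* e \cong (Pe)^*_B$ is $B$-linear with respect to the right actions induced by the anti-structures; the dual-module conventions (which involve the anti-automorphism) make sign and conjugation errors easy, and the commutation $eu = ue$ forced by $e^\sigma = e$ is precisely what makes the hermitian Morita transfer coherent.
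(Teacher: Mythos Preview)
Your proposal is correct and follows essentially the same approach as the paper. Both arguments rest on the Morita equivalence $F=(-)e:\rproj{A}\to\rproj{B}$ (valid since $AeA=A$) together with the natural isomorphism $\phi_P:P^*e\to (Pe)^*_B$ given by $f\mapsto f|_{Pe}$, and then check that this data is compatible with the dualities and form parameters. The paper packages the verification by invoking the notion of a \emph{strictly duality preserving functor} from \cite[\S2]{QuSchSch79} and appealing to \cite[Lm.~2.1]{QuSchSch79} for (iv), whereas you unwind those same checks by hand; your explicit observation $eu=ue$ (from $e^{\sigma\sigma}=e$) is exactly the elementary fact underlying the compatibility of $\phi_P$ with the right $B$-module structures induced by the anti-structures. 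The one place where your sketch is slightly glib is the claim that $\Lambda_P$ corresponds to $\Lambda_{Pe}$ ``by (ii) applied in both directions'': the forward direction is immediate, but the converse (that $\gamma_e\in\Lambda_{Pe}$ forces $\gamma\in\Lambda_P$) requires using the bijection $\Sesq{P}\cong\Sesq{Pe}$ together with the fact that the $(-u)$-hermitian condition on $\gamma$ is detected by $F$ and that $\gamma(x,x)\in\Lambda$ can be recovered from $\gamma_e$ via a decomposition $1=\sum x_iey_i$---this is routine but not literally ``(ii) in reverse,'' and is precisely what the paper outsources to the duality-preserving-functor formalism.
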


    \begin{proof}[Proof (sketch)]
        Part (i) is straightforward.

        For parts (ii), (iii) and (iv), view $\rproj{A}$ and $\rproj{B}$
        as \emph{hermitian categories with a form parameter} as in Remark~\ref{RM:herm-category}.
        Let $F:\rproj{A}\to \rproj{B}$ be the functor
        given by $FP=P\otimes_A Ae\cong Pe$. By Morita Theory (see \cite[\S18D]{La99}  for instance),
        $F$ is an equivalence of categories.
        In addition,
        there is a natural isomorphism $\phi_P$
        from $F(P^*)=P^*e$
        to $(FP)^* =\Hom_{eAe}(Pe,eAe)$ given by $\phi(f)\mapsto f|_{Pe}$ (check this for $P=A_A$,
        the general case follows by additivity). It is routine to check that
        $(F,\phi)$ is \emph{strictly duality preserving functor} from $\rproj{A}$ to $\rproj{B}$ (see \cite[\S{}2]{QuSchSch79}).
        This means parts (ii) and (iii) hold tautologically, and part (iv)
        follows from \cite[Lm.~2.1]{QuSchSch79} (for instance).
    \end{proof}

\subsection{Simple Unitary Rings}
\label{subsection:simple-art-unit-rings}

    A unitary ring $(A,\sigma,u,\Lambda)$ is called \emph{simple} if the only ideals of $A$ which are invariant
    under $\sigma$ are $0$ and $A$. It is not hard to show that in this case, $A$ is either simple, or
    $A\cong B\times B^\op$, where $B$ is a simple ring, and $\sigma$ is given by $(a,b^\op)^\sigma=(tbt^{-1},a^\op)$ for
    some $t\in\units{B}$.

\medskip

    Assume $(A,\sigma,u,\Lambda)$ is simple and $A$ is artinian. Then
    the Artin-Wedderburn Theorem implies that $A\cong \nMat{D}{n}$ where $D$ is a division ring or a product
    of a division ring and its opposite. Identifying
    $A$ with $\nMat{D}{n}$, we say that such $(A,\sigma,u,\Lambda)$  is \emph{standard}
    or \emph{in standard form}
    if:
    \begin{enumerate}
        \item[(1)] $\sigma$ is of the form $(d_{ij})_{i,j}\mapsto (d_{ji}^\tau)_{i,j}$ for some \emph{involution}
        $\tau:D\to D$. (In particular, $\sigma$ is an involution.)
        \item[(2)] When $D\cong E\times E^\op$ with $E$ a division ring, $\tau$ is the exchange involution
        $(a,b^\op)\mapsto (b,a^\op)$.
        \item[(3)] $u=1$ if  $\tau\neq \id_D$.
    \end{enumerate}
    In this case, we have $u\in\{\pm 1\}$ (because when $\tau=\id_D$, we have $u^2=u^\sigma u=1$).

    It is not true that any simple artinian unitary ring is isomorphic to a unitary ring in standard
    form. However, this is true after applying a suitable conjugation in the sense of Proposition~\ref{PR:conjugation-of-unitary-rings},
    and conjugation does not essentially change the category of quadratic spaces.

    \begin{prp}\label{PR:conjugation-to-standard-form}
        After  a suitable conjugation (cf.\ Proposition~\ref{PR:conjugation-of-unitary-rings}), any simple
        artinian unitary ring $(A,\sigma,u,\Lambda)$ is isomorphic to a  unitary ring in standard form.
    \end{prp}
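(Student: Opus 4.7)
My plan is to reduce $A$ to matrix form via the Artin--Wedderburn theorem, exhibit $\sigma$ as the composition of an inner automorphism with a ``standard'' involution $\sigma_0$ of $\nMat{D}{n}$, and then use the corresponding inner factor as the parameter in Proposition~\ref{PR:conjugation-of-unitary-rings} to conjugate $(A,\sigma,u,\Lambda)$ into standard form. By Artin--Wedderburn and the dichotomy recalled at the start of \ref{subsection:simple-art-unit-rings}, I may identify $A$ with $\nMat{D}{n}$, where $D$ is either a division ring (first-kind case) or $D = E \times E^{\op}$ for a division ring $E$ (second-kind case).

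\textbf{Step 1 (producing $\sigma_0$ and $w$).} I will exhibit an involution $\tau$ on $D$ satisfying conditions (1) and (2) of the definition, along with an element $w \in \units{A}$ such that $\sigma = \Inn(w)\circ \sigma_0$, where $\sigma_0((d_{ij})) = (d_{ji}^\tau)$. In the second-kind case, any anti-automorphism of $A$ must swap the two factors $\nMat{E}{n}$ and $\nMat{E}{n}^{\op}$, which forces $\tau$ to be the exchange involution $(a,b^{\op})\mapsto (b,a^{\op})$; $w$ is then furnished by applying Skolem--Noether to the automorphism $\sigma_0^{-1}\circ \sigma$. In the first-kind case, the existence of $\sigma$ with $\sigma^2$ inner forces $[A] = [A^{\op}] = [A]^{-1}$ in the Brauer group of $Z(A)$, so $[A]$ has order at most $2$; Albert's classical theorem then produces an involution on $A$, which has the form $\sigma_0$ for some involution $\tau$ on $D$, and Skolem--Noether again supplies $w$.

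\textbf{Step 2 (applying conjugation and normalizing $u$).} Using the identity $\sigma_0\circ \Inn(w) = \Inn(\sigma_0(w)^{-1})\circ\sigma_0$, a direct computation gives $\sigma^2 = \Inn(w\,\sigma_0(w)^{-1})$; comparing with $\sigma^2 = \Inn(u)$ shows $w\,\sigma_0(w)^{-1} = cu$ for some $c \in \units{Z(A)}$. I now apply Proposition~\ref{PR:conjugation-of-unitary-rings} with $v := w^{-1}$: the conjugated anti-automorphism equals $\sigma_0$, and the conjugated unit $u'$ lies in $\units{Z(A)}$ and satisfies $\sigma_0(u')u' = 1$. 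When $\tau$ acts non-trivially on $Z(A)$ (which is all of the second-kind case, and the second-kind involution subcase in the first-kind case), Hilbert~90 applied to the degree-$2$ \'etale extension $Z(A)/Z(A)^{\sigma_0}$ produces $z \in \units{Z(A)}$ with $z/\sigma_0(z) = (u')^{-1}$, and a further central conjugation yields $u'' = 1$. When $\tau$ fixes $Z(A)$, the relation $\sigma_0(u')u' = 1$ directly forces $u' \in \{\pm 1\}$: if $\tau = \id_D$ we are done (condition~(3) is vacuous); if $\tau$ is a non-trivial first-kind involution and $u' = -1$, I conjugate by any $\sigma_0$-skew unit $v \in \units{A}$ (easily produced on $\nMat{D}{n}$ from a nonzero skew element of the noncommutative division ring $D$, or from an off-diagonal matrix unit when $n \ge 2$), which flips $u'$ to $+1$ while preserving the standard shape of $\sigma_0$.

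The main obstacle is the first-kind subcase of Step~1, which genuinely requires Albert's theorem to upgrade the given anti-automorphism $\sigma$ to an actual involution on $A$. A secondary delicate point arises in Step~2 in the first-kind-involution subcase, where central conjugation cannot alter $u'$ and one must instead produce a $\sigma_0$-skew unit. Beyond these two points, the remainder is a routine verification of the conjugation formulas of Proposition~\ref{PR:conjugation-of-unitary-rings} together with the Hilbert~90 normalization, after which conditions (1)--(3) in the definition of ``standard form'' hold by construction.
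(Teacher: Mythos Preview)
Your argument has a genuine gap in Step~1 in the case where $D$ is a division ring but $\sigma$ does \emph{not} fix the center $K=Z(A)$. You invoke Albert's theorem via the implication ``$[A]=[A^{\op}]$ in $\Br(K)$, so $A$ carries an involution''; however, the equality $[A]=[A^{\op}]$ in $\Br(K)$ only follows when $\sigma$ is $K$-linear. If $\sigma|_K$ is a nontrivial order-$2$ automorphism $\rho$, then $\sigma$ only yields $[A]=\rho_*[A]^{-1}$, and Albert's theorem (as you state it) produces a \emph{first-kind} involution $\sigma_0$. Then $\sigma_0^{-1}\circ\sigma$ acts nontrivially on $K$ and is therefore not inner, so Skolem--Noether does not supply your $w$. (The smallest instance: $A=\C$, $\sigma=$ complex conjugation, $u=1$.) You acknowledge this subcase in Step~2 (``the second-kind involution subcase in the first-kind case''), but you never produce the required second-kind $\sigma_0$ for it in Step~1. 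Patching this would require the Albert--Riehm--Scharlau criterion $\Cor_{K/K_0}[A]=0$ together with a verification of that hypothesis from the mere existence of $\sigma$ --- which is essentially as hard as what you are trying to prove at the level of $D$.

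The paper's proof avoids all of this by a more elementary and uniform route: it first conjugates $\sigma$ to the matrix-transpose shape $(d_{ij})\mapsto(d_{ji}^\tau)$ with $\tau$ merely an \emph{anti-automorphism} of $D$ (citing \cite[Th.~7.8]{Fi13A}), observes that then $u\in D$, and reduces the entire problem to normalizing $(D,\tau,u)$. There it exhibits the conjugating element directly as $v=d+u^{-1}d^\tau$ for a suitable $d\in D$, checking that the identity $u^{-1}v^\tau=v$ forces $u'=1$ and makes $\tau'$ an involution; the only way this fails is when every such $v$ vanishes, which forces $\tau=\id_D$ and $u=-1$. No Albert, no Hilbert~90, and no first/second-kind case split is needed.
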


    \begin{proof}
        Apart from a small difference in condition (3), this proposition is \cite[Pr.~2.1]{Reiter75}.
        We have included here a full proof of the sake of completeness.

        Let $A=\nMat{D}{n}$ be as above.
        By \cite[Th.~7.8]{Fi13A}, $\sigma$ is conjugate
        to some $\sigma'$ of the form $(d_{ij})_{i,j}\mapsto (d_{ji}^\tau)_{i,j}$ where
        $\tau:D\to D$ is an
        \emph{anti-automorphism}, so assume $\sigma$ is in this form.
        This implies that $u$ commutes with the standard matrix units $\{e_{ij}\}$ (because they satisfy
        $e_{ij}^{\sigma\sigma}=e_{ij}$), hence we may view $u$ as an element of $D$ (embedded diagonally
        in $A=\nMat{D}{n}$) which satisfies
        $d^{\tau\tau}=udu^{-1}$ for all $d\in D$. Now, it is enough to show that $(D,\tau)$ can
        be made standard by conjugation.

        Assume that there exists $v\in \units{D}$
        with $u^{-1}v^\tau =v$. Then $u':=v(v^\tau)^{-1}u=1$, so by
        Proposition~\ref{PR:conjugation-of-unitary-rings} (or by
        computation), $\tau':d\mapsto vd^\tau v^{-1}$ is an involution.
        Observe that $d\mapsto u^{-1}d^\tau$ is an involutary additive map,
        hence $v=d+u^{-1}d^\tau$ always satisfies $u^{-1}v^\tau =v$.
        If $D= E\times E^\op$ with $E$ a division ring, take $d=(1_E,0_E^\op)$ to get
        $v\in \units{D}$. Otherwise, any $d$ with $v=d+u^{-1}d^\tau\neq 0$ will do. If
        such $d$ does not exist, then $d^\tau=-ud$ for all $d\in D$. Taking $d=1$ implies
        $u=-1$ and hence $\tau=\id_D$.
        Thus, either $(u,\tau)$ can be conjugated to $(1,\tau')$ with $\tau':D\to D$ an involution,
        or $\tau=\id_D$ and $u=-1$ (in which case $D$ is a field).
        This implies (1) and (3).

        It remains to check (2). Indeed, when $D$ is not a division ring, there is an \emph{isomorphism}
        $D\cong E\times E^\op$
        and \emph{under that isomorphism}
        $\tau$ is given by $(a,b^\op)^\tau=(tbt^{-1},a^\op)$ for some $t\in E$. Since
        $\tau$ is an involution, it must be the exchange involution.
    \end{proof}

    \begin{remark}
        Proposition~\ref{PR:conjugation-to-standard-form} is the reason why many authors
        require $\sigma$ to be an involution in the definition of unitary rings.
        The author does not know if there exists a similar result
        for semilocal rings (i.e.\ a statement
        guaranteeing that $\sigma$ can always be conjugated into an involution).
        See  \cite[Rm.~7.7]{Fi13B} for further discussion.
    \end{remark}

    \begin{prp}\label{PR:factorization-of-unitary-rings}
        Let $(A,\sigma,u,\Lambda)$ be a unitary ring such that $A$ is a semisimple (artinian)
        ring. Then $(A,\sigma,u,\Lambda)$ factors into a product
        \[
        (A,\sigma,u,\Lambda)\cong \prod_{i=1}^t(A_i,\sigma_i,u_i,\Lambda_i):=\Big(\prod_iA_i,\prod_i\sigma,(u_i)_i,\prod_i\Lambda_i\Big)
        \]
        with each $(A_i,\sigma_i,u_i,\Lambda_i)$ simple artinian.
    \end{prp}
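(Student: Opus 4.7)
The plan is to apply the Artin--Wedderburn decomposition of $A$ and organize its minimal central idempotents into $\sigma$-orbits. Let $e_1,\ldots,e_n$ be the minimal central idempotents of $A$, so $A=\prod_{j=1}^n Ae_j$ with each $Ae_j$ simple artinian. Because $\sigma$ is an anti-automorphism, it preserves the center and sends minimal central idempotents to minimal central idempotents, so it induces a permutation $\pi$ of $\{e_1,\ldots,e_n\}$. The relation $a^{\sigma\sigma}=uau^{-1}$ says that $\sigma^2$ acts trivially on the center, hence $\pi$ is an involution and its orbits have size $1$ or $2$.

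Let $O_1,\ldots,O_t$ be the $\pi$-orbits and put $f_i=\sum_{e\in O_i}e$. Each $f_i$ is a central, $\sigma$-fixed idempotent, the $f_i$ are pairwise orthogonal, and $\sum_i f_i=1$. Define
\[
A_i=Af_i,\qquad \sigma_i=\sigma|_{A_i},\qquad u_i=uf_i,\qquad \Lambda_i=f_i\Lambda.
\]
Using that $f_i$ is central and $\sigma$-fixed, a short computation shows $(A_i,\sigma_i,u_i)$ is an anti-structure on $A_i$ (with identity $f_i$): $u_i^{\sigma_i}u_i=f_i u^\sigma u f_i=f_i$ and $a^{\sigma_i\sigma_i}=u_i a u_i^{-1}$ for $a\in A_i$. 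Multiplying the chain $\Lambda^{\min}\subseteq\Lambda\subseteq\Lambda^{\max}$ by $f_i$ yields $\Lambda_i^{\min}\subseteq\Lambda_i\subseteq\Lambda_i^{\max}$, and $a^{\sigma_i}\Lambda_i a\subseteq\Lambda_i$ is immediate, so $(A_i,\sigma_i,u_i,\Lambda_i)$ is a unitary ring. The ring factorization $A\cong\prod_i A_i$ is compatible with $\sigma$, $u$, and $\Lambda$: orthogonality and centrality of the $f_i$ give $f_i\Lambda f_j=0$ for $i\neq j$, so every $\lambda\in\Lambda$ decomposes as $\lambda=\sum_i f_i\lambda$ with $f_i\lambda\in\Lambda_i$, and hence $\Lambda=\bigoplus_i\Lambda_i$.

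Finally, I verify simplicity of each factor. An ideal of $A_i$ is a sum $\sum_{e\in S}Ae$ for some $S\subseteq O_i$, and it is $\sigma_i$-invariant precisely when $S$ is $\pi$-stable; since $O_i$ is a single $\pi$-orbit, only $S=\varnothing$ and $S=O_i$ qualify, so the only $\sigma_i$-invariant ideals of $A_i$ are $0$ and $A_i$. The argument is essentially bookkeeping, and there is no real obstacle; the one substantive observation is that $\sigma^2$ being inner forces $\pi$ to be an involution, which is exactly what guarantees that each orbit sum $f_i$ is $\sigma$-fixed and thereby gives a genuine sub-unitary-ring structure on $Af_i$.
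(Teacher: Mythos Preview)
Your argument is correct and is exactly the standard one: group the minimal central idempotents into $\sigma$-orbits (which have size at most $2$ because $\sigma^2$ is inner), take the orbit sums as the $\sigma$-fixed central idempotents $f_i$, and check that everything restricts. The paper does not actually prove this proposition; it merely cites \cite[p.~486]{Reiter75}, where the same decomposition is carried out, so your write-up supplies precisely the details the paper omits.
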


    \begin{proof}
        See \cite[p.~486]{Reiter75}, for instance.
    \end{proof}

\subsection{Orthogonality}
\label{subsection:orthogonality}

    We now define a notion of orthogonality for simple artinian unitary rings which will be used
    later in the text (compare with the orthogonality defined in \cite[Ch.~4,~\S2]{Bass73AlgebraicKThyIII} in the commutative case).
    This notion is used implicitly and repeatedly in \cite{Reiter75}.

    \begin{dfn}\label{DF:orthogonal}
        A simple artinian unitary ring $(A,\sigma,u,\Lambda)$ is called
        \emph{orthogonal} if:
        \begin{enumerate}
            \item[(1)] $A$ is simple and of finite dimension over its center, denoted $K$,
            \item[(2)] $\sigma|_K=\id_K$,
            \item[(3)] $\Lambda$ is a $K$-vector space and $\dim_K\Lambda=\frac{1}{2}n(n-1)$
            where $n=\sqrt{\dim_KA}$.
        \end{enumerate}
        If in addition $A\cong \nMat{K}{n}$ (i.e.\ $A$ is split as a central simple $K$-algebra), then we say
        that $(A,\sigma,u,\Lambda)$ is \emph{split-orthogonal}.
    \end{dfn}

    \begin{remark}
        We use the term ``orthogonal'' because
        isometry groups of unimodular quadratic forms over an orthogonal unitary ring $(A,\sigma,u,\Lambda)$
        are \emph{forms} of the the orthogonal group ${\mathbf O}_m(K)$, when viewed as algebraic groups over $K:=\Cent(A)$.
        This follows from the discussion  in~\ref{subsection:dickson} below.
        (A \emph{symplectic unitary ring}
        can likewise be defined by replacing $\frac{1}{2}n(n-1)$ with $\frac{1}{2}n(n+1)$ in condition (3).)
    \end{remark}

    \begin{example}\label{EX:standard-orthogonal}
        If $(A,\sigma,u,\Lambda)$ is simple artinian and \emph{in standard from}
        (see~\ref{subsection:simple-art-unit-rings}), then
        it is split-orthogonal
        if and only if $A\cong\nMat{K}{n}$ for a field $K$, $\sigma$ is the matrix transposition, $u=1$,
        and $\Lambda=\Lambda^{\min}$.
    \end{example}

    Generalizing the example, let $(A,\sigma,u,\Lambda)$ be a unitary ring such that $\sigma$ is an involution.
    If $(A,\sigma,u,\Lambda)$ satisfies conditions (1) and (2), then $A$ is a \emph{central simple algebra} over its center $K$
    (see \cite[\S1]{InvBook}) and $\sigma$ is an involution \emph{of the first kind} (i.e.\ it fixes $\Cent(A)$).
    This easily implies $u\in\{\pm 1\}$.
    By \cite[Pr.~2.6]{InvBook}, when $\Char K\neq 2$, there is $\veps\in\{\pm 1\}$ such that
    \[
    \dim_K\{a\pm a^\sigma\where a\in A\}=\frac{1}{2}n(n\pm \veps) 
    \]
    where $n=\deg A:=\sqrt{\dim_KA}$.
    When $\veps=1$ (resp.\ $\veps=-1$) $\sigma$ is called \emph{orthogonal} (resp.\ \emph{symplectic}).
    Furthermore, when $\Char K=2$, we always have
    \[
    \dim_K\Lambda^{\min}=\dim_K\{a-a^\sigma\where a\in A\}=\frac{1}{2}n(n-1)\ .
    \]
    Thus, when $\sigma$ is an involution, condition (3) is equivalent to having one of the following:
    \begin{enumerate}
        \item[(3a)] $\Char K\neq 2$, $\sigma$ is orthogonal and $u=1$,
        \item[(3b)] $\Char K\neq 2$, $\sigma$ is symplectic and $u=-1$,
        \item[(3c)] $\Char K=2$ and $\Lambda=\Lambda^{\min}$.
    \end{enumerate}
    See \cite[\S2]{InvBook} for further details about orthogonal and symplectic involutions.

    We further recall that the \emph{index} of a central simple $K$-algebra $A$ admitting an
    involution of the first kind is a power of $2$
    (\cite[Cr.~2.8]{InvBook}).
    Thus, if $\deg A$ is odd, then $A$ is split (i.e.\ $\ind A=1$).

    \begin{prp}\label{PR:orth-not-affected-by-conj}
        Orthogonality (resp.\ split-orthogonality) of simple artinian unitary rings is preserved under conjugation
        (see Proposition~\ref{PR:conjugation-of-unitary-rings}).
        Furthermore, if $e\in A$ is an idempotent satisfying $e^\sigma=e$,
        then $(A,\sigma,u,\Lambda)$ is orthogonal (resp.\ split-orthogonal) if and only
        if $(eAe,\sigma|_{eAe},ue,e\Lambda e)$ is orthogonal (resp.\ split-orthogonal).
    \end{prp}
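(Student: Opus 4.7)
The proof has two parts: invariance under conjugation, and invariance under $e$-transfer. In each, I plan to verify the three conditions of Definition~\ref{DF:orthogonal} in turn, and deduce the split case from the fact that neither operation changes the Brauer class of $A$ (conjugation leaves $A$ untouched, while $e$-transfer is a Morita equivalence).

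For conjugation by $v\in\units{A}$, yielding $(A,\sigma',u',\Lambda')$: condition~(1) depends only on $A$, which is unchanged. For~(2), $k^{\sigma'}=vk^\sigma v^{-1}=vkv^{-1}=k$ for $k\in K:=\Cent(A)$, using $k^\sigma=k$ together with centrality of $k$. For~(3), left multiplication by $v$ is a $K$-linear bijection on $A$ (since $K$ is central), so $\Lambda'=v\Lambda$ is a $K$-subspace of $A$ of the same dimension as $\Lambda$.

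For the $e$-transfer, the preliminary observation is that $e=e^{\sigma\sigma}=ueu^{-1}$ forces $ue=eu$, whence $eu=eue\in eAe$ and $(eAe,\sigma|_{eAe},eu,e\Lambda e)$ is a well-defined unitary ring. Since $e\neq 0$ and $A$ is simple, $AeA=A$, so $A$ and $B:=eAe$ are Morita equivalent. This gives condition~(1) for $B$ (simplicity and finite-dimensionality over the center), the identification $\Cent(B)=Ke\cong K$, and the equivalence between $A$ and $B$ being split as central simple $K$-algebras. Condition~(2) is immediate from $(ke)^\tau=k^\sigma e=ke$.

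The substantive step is condition~(3). I plan first to verify by direct calculation, using $ue=eu$ and $(ebe)^\sigma=eb^\sigma e$, the identities $e\Lambda^{\min} e=\Lambda^{\min}_B$ and $e\Lambda^{\max} e=\Lambda^{\max}_B$ (where here $\Lambda^{\min}$ and $\Lambda^{\max}$ are formed for the two unitary rings in the obvious way). In characteristic $\neq 2$, $\Lambda^{\min}=\Lambda^{\max}$, so both $\Lambda$ and $e\Lambda e$ are uniquely determined as this common subspace. By the classification immediately preceding the proposition, orthogonality of $(A,\sigma,u,\Lambda)$ then amounts to $\sigma$ being an orthogonal involution with $u=1$ (case~(3a)) or a symplectic involution with $u=-1$ (case~(3b)); I would here invoke the standard fact that the type of an involution of the first kind on a central simple algebra is preserved under restriction to $eAe$ for an idempotent $e$ with $e^\sigma=e$ (cf.~\cite[\S2]{InvBook}), which transports the condition to $(B,\tau,eu,e\Lambda e)$. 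In characteristic~$2$, orthogonality is case~(3c), namely $\Lambda=\Lambda^{\min}$, which transports to $e\Lambda e=\Lambda^{\min}_B$ via the identity just established. The only non-routine ingredient is the preservation of involution type in characteristic $\neq 2$; everything else reduces to a short direct verification.
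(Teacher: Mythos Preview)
Your approach via the classification (3a)/(3b)/(3c) is a natural alternative to the paper's direct dimension count, but it has two genuine gaps.

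First, the equivalence of condition~(3) with cases (3a)--(3c) in the discussion preceding the proposition is established only when $\sigma$ is an \emph{involution}; the proposition makes no such assumption. You cannot simply conjugate $\sigma$ into an involution via Proposition~\ref{PR:conjugation-to-standard-form} and then restrict, because the conjugating element $v$ need not commute with $e$, so after conjugation $e^{\sigma'}=vev^{-1}$ is generally not $e$ and your restriction argument no longer applies. Second, and independently, your characteristic-$2$ step only yields the forward implication: from $\Lambda=\Lambda^{\min}_A$ you correctly get $e\Lambda e=e\Lambda^{\min}_Ae=\Lambda^{\min}_B$, but the converse --- that $e\Lambda e=e\Lambda^{\min}_Ae$ forces $\Lambda=\Lambda^{\min}_A$ --- does not follow from the identity alone. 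One must show that a form parameter is determined by its compression to $eAe$, and this requires writing $1=\sum_ix_iey_i$, expanding $a\in\Lambda$ as $\sum_{i,j}y_i^\sigma ex_i^\sigma ax_jey_j$, and checking that diagonal summands land in $\Lambda^{\min}_A$ via the hypothesis on $e\Lambda e$ while off-diagonal pairs do so via $a\in\Lambda^{\max}$. This is exactly the substantive computation the paper carries out (phrased there as: $e\Lambda e$ a $K$-space implies $\Lambda$ a $K$-space). The paper then finishes uniformly in all characteristics with the block identity $\dim_K\Lambda=\dim_Ke\Lambda e+\dim_Kf\Lambda f+m(n-m)$ for $f=1-e$, combined with the constraint that each of these dimensions lies in $\{\tfrac12 r(r\pm1)\}$ for the relevant $r$ --- the latter being valid for arbitrary $\sigma$ precisely because it is a dimension statement invariant under the conjugation of Proposition~\ref{PR:conjugation-to-standard-form}.
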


    \begin{proof}
        That orthogonality (resp.\ split-orthogonality) is invariant under conjugation is clear from the definitions, so we turn
        to prove the second statement. Note that since $(A,\sigma,u,\Lambda)$ is simple and $e^\sigma=e$, we
        have
        $AeA=A$ (because $(AeA)^\sigma=AeA$). Morita Theory (see \cite[\S18D]{La99}, for instance)
        now implies that $A$
        is simple if and only if $eAe$ is simple, and $\Cent(eAe)=e\Cent(A)$.
        Writing $K=\Cent(A)$, it follows that
        $A$ is a (split) central simple $K$-algebra if and only if $eAe$ is.
        Furthermore, in this case, it is easy to see that $\sigma$ is of the first kind if and only if $\sigma|_{eAe}$ is.
        Therefore, we may assume $A$ is a central simple $K$-algebra and $\sigma$ is of the first
        kind.

        We claim that $\Lambda$ is a $K$-vector space if and only if $e\Lambda e$ is a $K$-vector space. (In fact, this is
        clear when $\Char K\neq 2$ because $\Lambda=\Lambda^{\min}$ and $\sigma$ is of the first kind.)
        One direction is evident so we turn to show the other. Assume $e\Lambda e$ is a $K$-vector space
        and let $a\in\Lambda$ and $k\in K$. Write $1_A=\sum_ix_iey_i$ for $\{x_i,y_i\}_i\subseteq A$. Then
        $a=(\sum_ix_iey_i)^\sigma a(\sum_j x_jey_j)=\sum_{i,j}a_{ij}$
        where $a_{ij}=y_i^\sigma ex_i^\sigma ax_j ey_j$.
        Observe that since $a^\sigma u=-a$,
        $a_{ji}=-a_{ij}^\sigma u$. Since $e\Lambda e$ is a $K$-vector space,
        $k\cdot ex_i^\sigma a x_ie\in e\Lambda e\subseteq\Lambda$ for all $i$,
        hence $k\cdot a_{ii}=y_i^\sigma (kex_i^\sigma a x_ie)y\in\Lambda$. Now,
        $ka=\sum_{i,j}a_{ij}=k\sum_{i<j}(a_{ij}-a_{ij}^\sigma u)+k\sum_i a_{ii}
        =\sum_{i<j}(ka_{ij}-(ka_{ij})^\sigma u)+\sum_i (ka_{ii})
        \in \Lambda$,
        as required.

        Assume $\Lambda$ is a $K$-vector space.
        It is left to show that $\dim_K\Lambda=\frac{1}{2}n(n- 1)$ if and only if
        $\dim_Ke\Lambda e=\frac{1}{2}m(m-1)$, where $n=\deg A$ and $m=\deg eAe$.
        Observe that by the above discussion, when $\sigma$ is an involution, we always have
        $\dim_K\Lambda=\frac{1}{2}n(n\pm 1)$ and $\dim_Ke\Lambda e=\frac{1}{2}m(m\pm 1)$.
        Thus, by Proposition~\ref{PR:conjugation-to-standard-form}, the same holds for arbitrary $\sigma$.
        Let $f=1-e$. There is nothing to prove if $f=0$.
        Otherwise, $\deg fAf=n-m$, hence $\dim_K f\Lambda f=\frac{1}{2}(n-m)(n-m\pm 1)$.
        It is easy to check that $\dim \Lambda =\dim e\Lambda e+\dim f\Lambda f+\dim eAf=
        \dim e\Lambda e+\dim f\Lambda f+m(n-m)$, and this implies
        $\dim \Lambda=\frac{1}{2}n(n-1)$ if and only
        if $\dim e\Lambda e=\frac{1}{2}m(m-1)$.
    \end{proof}

\subsection{Semiperfect Rings}
\label{subsection:semiperfect}

    We finish this section with recalling several facts about semiperfect rings.
    Proofs  and additional details can be found in \cite[\S2.7--\S2.9]{Ro88}.

\medskip

    A ring $A$ is called \emph{semiperfect} if it satisfies the following equivalent
    conditions:
    \begin{enumerate}
        \item[(a)] $A$ is is semilocal and $\Jac(A)$ is idempotent lifting.
        \item[(b)] All finitely generated right (or left) $A$-modules have  a \emph{projective cover}
        (see \cite[Df.~2.8.31]{Ro88}).
        \item[(c)]
        There exists orthogonal idempotents $e_1,\dots,e_n\in A$ with $\sum_ie_i=1$
        and such that $e_iAe_i$ is local for all $i$.
    \end{enumerate}
    In this case, any system of orthogonal idempotents in $A/\Jac(A)$ can
    be lifted to a system of orthogonal idempotents in $A$. Furthermore,
    $eAe$ is semiperfect for any idempotent $e\in A$.

\medskip

    Examples of semiperfect rings include all
    one-sided artinian rings, and more gerenally, all semilocal rings $A$ with $A=\invlim \{A/\Jac(A)^n\}_{n\in\N}$.
    Further examples that will be used later can be obtained from the following proposition.

    \begin{prp}\label{PR:semiperfect-examples}
        Let $R$ be a henselian local (commutative) ring, and let $A$ be an $R$-algebra. Then
        $A$ is semiperfect if one of the following holds:
        \begin{enumerate}
            \item[(1)] $R$ is noetherian and $A$ is finitely generated as an $R$-module.
            \item[(2)] $R$ is a valuation ring and $A$ is $R$-torsion-free and of finite rank over $R$.\footnote{
                For an integral domain $R$, the rank an $R$-module $M$ is $\dim_KM\otimes_RK$,
                where $K$ is the fraction field of $R$.
            }
        \end{enumerate}
    \end{prp}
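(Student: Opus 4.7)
For case~(1), which is a classical result, I would verify characterization (a) of semiperfectness from \S\ref{subsection:semiperfect} directly. Writing $\mathfrak{m} := \Jac(R)$ and $k := R/\mathfrak{m}$, the quotient $\bar{A} := A/\mathfrak{m}A$ is a finite-dimensional $k$-algebra (since $A$ is finitely generated over $R$), hence artinian. Every simple right $A$-module $S$ is a cyclic $A$-module and hence finitely generated as an $R$-module, so Nakayama's lemma forces $\mathfrak{m}A$ to annihilate $S$; this gives $\mathfrak{m}A \subseteq \Jac(A)$, making $A/\Jac(A)$ a semisimple artinian quotient of $\bar{A}$. Finally, since $A$ is module-finite over the henselian local ring $R$, the pair $(A,\mathfrak{m}A)$ is henselian, so idempotents of $\bar{A}$ lift to $A$ via a standard Newton-type iteration solving $x^2 - x \equiv 0 \pmod{\mathfrak{m}A}$ (using that $1 - 2\bar{e}$ is a unit in $\bar{A}$ for any idempotent $\bar{e}$).

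For case~(2), the above approach breaks down: $A$ need not be finitely generated over $R$ (e.g., $A = K$ is permitted), and already in simple examples like $A = \Z_p \times \Q_p$ over $R = \Z_p$ one finds $\mathfrak{m}A \not\subseteq \Jac(A)$ (since $\mathfrak{m}A = p\Z_p \times \Q_p$ contains units of $A$). The strategy I would adopt is to construct a complete orthogonal idempotent decomposition of $A$ directly, by exploiting the finite-dimensional $K$-algebra $B := A \otimes_R K$ (which is artinian, hence semiperfect) together with module-finite commutative $R$-subalgebras of $A$. Using the Bezout property of the valuation ring $R$, one first shows $\dim_k \bar{A} \leq \mathrm{rank}_R A$: any $R$-relation among lifts of $k$-linearly independent elements of $\bar A$ may be rescaled to one whose coefficients generate $R$, and $R$-torsion-freeness then yields a contradiction after reduction mod $\mathfrak{m}$. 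Next, one analyses the primitive central idempotents of $B$ and their relationship to $A$. An idempotent of $B$ that happens to lie in $A$ gives a direct product decomposition; otherwise, one lifts idempotents by Hensel's lemma inside a module-finite subring $R[e_0] \subseteq A$, where $e_0$ is an integral lift of an idempotent of $\bar{A}$ chosen (by modifying within its coset modulo $\mathfrak{m}A$) so that its minimal polynomial has coefficients in $R$. Case~(1) then applies to $R[e_0]$, producing the required idempotents in $A$; locality of each $e_iAe_i$ follows by applying case~(1) to the module-finite subalgebras of $e_iAe_i$ and noting that $e_iAe_i$ must have no nontrivial idempotents by construction.

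The principal obstacle is case~(2). Unlike case~(1), there is no single uniform radical (such as $\mathfrak{m}A$) that simultaneously captures the obstruction to semisimplicity and supports idempotent lifting; one must work with $\Jac(A)$ indirectly, identifying idempotents via $B$ and the module-finite subalgebras of $A$. The valuation and henselian structures of $R$ play essential and complementary roles: Bezout controls linear dependence and rank bounds (so that $\bar{A}$ remains finite-dimensional, and module-finite subalgebras behave well), while henselianness supplies the actual lifting mechanism for idempotents within those subalgebras.
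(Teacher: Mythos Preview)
Your treatment of case~(1) is correct and more explicit than the paper's, which simply cites Azumaya and V\'amos.

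For case~(2), however, there is a genuine gap. You correctly identify the central difficulty---elements of $A$ need not be integral over $R$---but both of your key steps then silently assume integrality. First, you assert that an idempotent of $\bar A$ admits an \emph{integral} lift $e_0$ ``by modifying within its coset modulo $\mathfrak{m}A$,'' but you give no mechanism for producing such a modification, and none is apparent. Second, and more seriously, your argument that each $e_iAe_i$ is local (``apply case~(1) to the module-finite subalgebras and note there are no nontrivial idempotents'') does not work: for an arbitrary $a\in e_iAe_i$, the subalgebra $R[a]$ is module-finite over $R$ precisely when $a$ is integral over $R$, which you cannot assume. A ring with no nontrivial idempotents is not automatically local, so the absence of idempotents alone is insufficient. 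You also do not argue that the idempotent-splitting process terminates; this at least is easily repaired, since nonzero orthogonal idempotents of $A$ remain so in the artinian ring $B=A\otimes_R K$, bounding their number. Finally, note that your bound $\dim_k\bar A\le\operatorname{rank}_R A$, while correct, does not bear on $A/\Jac(A)$, for the very reason you yourself flag: $\mathfrak{m}A\not\subseteq\Jac(A)$ in general.

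The paper argues case~(2) differently: it first invokes a theorem of Warfield to conclude that $A$ is semilocal, and then appeals to (the proof of) a lemma of V\'amos to show that whenever $eAe$ has no nontrivial idempotents it is local. The V\'amos argument is close in spirit to what you sketch, but it handles non-integral elements by applying a Hensel-type factorization to the non-monic, content-one polynomial satisfied by an arbitrary $a$ over the valuation ring---precisely the ingredient your outline lacks.
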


    \begin{proof}
        When (1) holds, this follows from \cite[Th.~22]{Azu51} or \cite[Lm.~12]{Vamos90}.
        When (2) holds, $A$ is semilocal by
        \cite[Th.~5.4]{Warfield80}. Let $e\in A$ be an idempotent such that $eAe$ has no idempotents
        other than $e$ and $0$. The proof of \cite[Lm.~14]{Vamos90} then implies that $eAe$ is local.
        Replacing $A$ with $(1-e)A(1-e)$ and repeating this procedure yields a (finite) system of orthogonal
        idempotents $e=e_1,e_2\dots,e_t\in A$ with $\sum_ie_i=1$ and such that $e_iAe_i$ local for all $i$,
        so $A$ is semiperfect.
    \end{proof}

    Let $A$ be a semiperfect ring.
    Then, up to isomorphism, there exist finitely many indecomposable projective
    $A$-modules, $\{P_i\}_{i=1}^t$, and every $P\in\rproj{A}$ can be written as $P\cong \bigoplus_{i=1}^t P_i^{n_i}$ with $(n_i)_{i=1}^t$
    uniquely determined. If $\quo{A}=A/\Jac(A)$,
    then $\{\quo{P}_i:=P_i/P_i\Jac(A)\}_{i=1}^t$ are the simple $\quo{A}$-modules,
    up to isomorphism.

    The modules $P_1,\dots,P_t$ can be constructed as follows:
    Write $\quo{A}$ as a product of simple artinian rings $\prod_{i=1}^t A_i$, let $\veps_i$
    be a primitive idempotent in $A_i$, and let $e_i$ be a lifting of $\veps_i$ to $A$.
    Then $e_1A,\dots,e_tA$ are the indecomposable projective right $A$-modules, up to isomorphism.

\section{Reflections and Quasi-Reflections}
\label{section:reflection}

    In this section we introduce and study \emph{quasi-reflections}, which slightly extend a notion of  reflections
    used by Reiter \cite{Reiter75}.
    Throughout,  $(A,\sigma,u,\Lambda)$ is a unitary ring, and
    $(P,[\beta])$ be a quadratic space over $(A,\sigma,u,\Lambda)$.

\medskip

    Let $e,f\in A$ be idempotents. An element $a\in eAf$ is called
    $(e,f)$-invertible if there exists $a'\in fAe$ such that $aa'=e$ and $a'a=f$.
    It is easy to see that $a'$ is unique and has $a$ as its $(f,e)$-inverse. We hence
    write $a'=\efinv{e}{f}{a}$, or just $a'=a^\circ$ when $e,f$ are understood from the context.
    Notice that there exists an $(e,f)$-invertible element if and only $eA\cong fA$,
    in which
    case we write $e\sim f$.
    Indeed, left multiplication by an $(e,f)$-invertible element gives an isomorphism from $fA$ to $eA$,
    and any isomorphism $fA\to eA$ is easily seen to be of this form.
    
    \begin{lem}\label{LM:ef-inv-mod-Jac}
        Let $e,f\in A$ be idempotents, and set $\quo{x}:=x+\Jac(A)\in A/\Jac(A)$ for all $x\in A$.
        Then $a\in eAf$ is $(e,f)$-invertible
        if and only if $\quo{a}$ is $(\quo{e},\quo{f})$-invertible.
    \end{lem}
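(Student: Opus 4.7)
The forward direction is immediate: reducing the identities $aa' = e$ and $a'a = f$ modulo $\Jac(A)$ yields an $(\overline{e}, \overline{f})$-inverse of $\overline{a}$, namely $\overline{a'}$.

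For the converse, the plan is to construct the inverse in two stages: first invert $a$ on one side, then upgrade the result to a two-sided inverse via an idempotent-rigidity argument. Concretely, I would lift an $(\overline{e}, \overline{f})$-inverse of $\overline{a}$ to some $b \in fAe$ (take any preimage and sandwich it with $f$ on the left and $e$ on the right). The element $ab$ lies in $eAe$ and reduces to $\overline{e}$ modulo $\Jac(A)$; since $\Jac(eAe) = e\Jac(A)e$, this forces $ab$ to be a unit in the ring $eAe$. Let $c \in eAe$ be its inverse and set $a' := bc \in fAe$, so that $aa' = (ab)c = e$ holds automatically.

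It remains to show $a'a = f$, and the crucial point is that $a'a \in fAf$ is already idempotent: using $aa' = e$ and $a' = fa'e$ one computes
\[
(a'a)(a'a) = a'(aa')a = a'ea = a'a.
\]
Moreover $\overline{a'a} = \overline{f}$, since $\overline{ab} = \overline{e}$ forces $\overline{c} = \overline{e}$, hence $\overline{a'} = \overline{b}$, which is a two-sided inverse of $\overline{a}$. Thus $f - a'a$ is an idempotent lying in $fAf \cap \Jac(A) = \Jac(fAf)$, and an idempotent in the Jacobson radical of a unital ring must vanish (if $x \in \Jac(R)$ is idempotent, then $1 - x$ is a unit and $x(1-x) = 0$). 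Hence $a'a = f$.

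The only real obstacle is that a naive lift $b$ of $\overline{a}^{-1}$ will in general satisfy neither $ab = e$ nor $ba = f$ exactly. The trick is to correct $b$ on the right by the unit $(ab)^{-1}$ computed inside $eAe$, which makes the identity $aa' = e$ immediate, and then to extract the left-sided identity $a'a = f$ for free from the combination of $aa' = e$ and the standard fact that idempotents modulo the Jacobson radical lift uniquely — rather than attempting to enforce both one-sided inverses simultaneously.
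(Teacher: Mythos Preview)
Your proof is correct. The setup coincides with the paper's: lift an $(\overline{e},\overline{f})$-inverse to some $b\in fAe$, observe that $\Jac(eAe)=e\Jac(A)e$ forces $ab\in(eAe)^\times$, and let $c$ be its inverse so that $a':=bc$ satisfies $aa'=e$. The paper then argues symmetrically that $ba\in(fAf)^\times$ with inverse $d$, so $(db)a=f$, and finishes with the standard left-inverse-equals-right-inverse computation $db=(db)e=(db)a(bc)=f(bc)=bc$. Your endgame is different: rather than invoking the second invertibility, you note that $a'a$ is automatically idempotent (from $aa'=e$) and congruent to $f$ modulo $\Jac(A)$, whence $f-a'a$ is an idempotent in $\Jac(fAf)$ and therefore zero. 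Your route trades the symmetric unit argument for the fact that the Jacobson radical contains no nonzero idempotents; both are equally short, and yours has the minor advantage of only needing to invert on one side.
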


    \begin{proof}
        We only show the non-trivial direction. Assume $\quo{a}$ has an $(\quo{e},\quo{f})$-inverse $\quo{b}$
        with $b\in fAe$. Then $\quo{ab}\in\units{(\quo{eAe})}$, hence $ab\in \units{eAe}$.
        (This follows from the easy fact  that $\Jac(A)\cap eAe=e\Jac(A)e=\Jac(eAe)$.)
        Likewise,
        $ba\in \units{fAf}$. Let $c$ be the inverse of $ab$ in $eAe$, and let $d$ be the inverse
        of $ba$ in $fAf$. Then $a(bc)=e$, $(db)a=f$, and $db=(db)e=(db)a(bc)=f(bc)=bc$. Thus,
        $a'=bc=db$ is an $(e,f)$-inverse of $a$.
    \end{proof}

    Let $e\in A$ be an idempotent, let $y\in Pe$, and let $c\in e^{\sigma}\hat{\beta}(y)e={\beta}(y,y)+e^\sigma \Lambda e$ be $(e^\sigma,e)$-invertible.
    We define $s_{y,e,c}:P\to P$ by
    \[
    s_{y,e,c}(x)=x-y\cdot c^\circ \cdot {h}_\beta(y,x)\ .
    \]
    Observe that $s_{y,e,c}$ is completely determined by the class $[\beta]$.
    Following \cite{Reiter75}, we call $s_{y,e,c}$ an \emph{$e$-reflection}.
    We will also use the name \emph{quasi-reflection}, which
    does not  restrict us to a particular idempotent $e$. A \emph{reflection} of $(P,[\beta])$
    is a $1$-reflection. 
    When we want to stress
    the quadratic form $[\beta]$, we shall write $s_{y,e,c}^{[\beta]}$ instead of $s_{y,e,c}$.

    \begin{remark}
        Reiter's definition of $e$-reflections  (\cite[Df.~1.2]{Reiter75}) is essentially the same, except
        that he assumes $e=e^\sigma$ (in which case $c^\circ$ is just the inverse of $c$
        in $eAe$). The generalization defined here will play a crucial role later in the text.
    \end{remark}

    \begin{prp}
        In the previous setting, $s_{y,e,c}$ is an isometry of $(P,[\beta])$. Its inverse
        is $s_{y,e,c^\sigma u}$.
    \end{prp}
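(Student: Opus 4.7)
The plan is to establish two claims: (i) the endomorphism $s := s_{y,e,c}$ preserves the class $[\beta]$, and (ii) $s' := s_{y,e,c^\sigma u}$ is its two-sided inverse. Together these give $s \in O(P,[\beta])$ with inverse $s'$.

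First I would verify that $s'$ is even well-defined. Writing $c = \beta(y,y) + \lambda'$ with $\lambda' \in e^\sigma \Lambda e$, one gets $c^\sigma u = \beta(y,y)^\sigma u - \lambda'$, and the difference from $\beta(y,y)$ lies in $e^\sigma \Lambda e$ (using $\Lambda^{\min} \subseteq \Lambda$ together with $\beta(y,y) \in e^\sigma A e$, which forces the $\Lambda^{\min}$-contribution into $e^\sigma \Lambda^{\min} e$). So $c^\sigma u \in e^\sigma \hat{\beta}(y) e$. For $(e^\sigma, e)$-invertibility, the candidate $(c^\sigma u)^\circ = u^{-1}(c^\circ)^\sigma$ works: its membership in $eAe^\sigma$ and the two inverse identities $(c^\sigma u)\cdot u^{-1}(c^\circ)^\sigma = e^\sigma$ and $u^{-1}(c^\circ)^\sigma \cdot (c^\sigma u) = e$ follow from $a^{\sigma\sigma} = uau^{-1}$ and $u^\sigma u = 1$. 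I would also record the symmetry $(c^\sigma u)^\sigma u = c$, which lets me swap the roles of $c$ and $c^\sigma u$ later.

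For (i), preserving $[\beta]$ is equivalent to preserving both $h_\beta$ and $\hat{\beta}$. The governing identity, derived from $c = \beta(y,y) + \lambda'$ with $\lambda' \in \Lambda^{\max}$, is $c + c^\sigma u = h_\beta(y,y)$. Expanding $h_\beta(s(x_1), s(x_2))$ by sesquilinearity produces three correction terms; after substituting this identity together with $cc^\circ = e^\sigma$, $c^\circ c = e$, and the $u$-hermitian relation $h_\beta(y,x)^\sigma u = h_\beta(x,y)$, they cancel to leave $h_\beta(x_1, x_2)$. For $\hat{\beta}$, the analogous expansion of $\beta(s(x), s(x)) - \beta(x,x)$ reduces, after using the identity $(c^\circ)^\sigma cc^\circ = (c^\circ)^\sigma$, to an element of $\Lambda^{\min}$ plus a term of the form $-(c^\circ w)^\sigma \lambda' (c^\circ w)$ where $w = h_\beta(y,x)$; the first lies in $\Lambda$ and the second lies in $\Lambda$ by the closure axiom $a^\sigma \Lambda a \subseteq \Lambda$.

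For (ii), direct expansion yields $s(s'(x)) = x + y M h_\beta(y, x)$, where $M = -u^{-1}(c^\circ)^\sigma - c^\circ + c^\circ h_\beta(y,y) u^{-1}(c^\circ)^\sigma$. Substituting $h_\beta(y,y) = c + c^\sigma u$ and applying $c^\circ c = e$, $c^\sigma (c^\circ)^\sigma = e^\sigma$, together with the absorption identities $c^\circ e^\sigma = c^\circ$ and $e\cdot u^{-1}(c^\circ)^\sigma = u^{-1}(c^\circ)^\sigma$, the bracket collapses to $0$, so $s \circ s' = \id$. The reverse identity $s' \circ s = \id$ follows by applying the same computation with $c$ and $c^\sigma u$ interchanged, via the symmetry noted above. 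The main obstacle is careful bookkeeping with the idempotents $e$ and $e^\sigma$, which are distinct in general, and with the non-central unit $u$: every appearance of $(\cdot)^\sigma$ or $(\cdot)^{\sigma\sigma}$ must be tracked to ensure elements land in the correct corners $eAe^\sigma$ or $e^\sigma A e$, with the identities $a^{\sigma\sigma} = uau^{-1}$ and $u^\sigma u = 1$ invoked rather than assuming $\sigma$ is an involution or $u$ is central.
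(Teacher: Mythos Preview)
Your proposal is correct and follows essentially the same approach as the paper, which simply refers to Reiter's direct computation in \cite[Pr.~1.3]{Reiter75} and instructs the reader to replace ordinary inverses by $(e^\sigma,e)$-inverses. You have carried out precisely that adaptation in full detail: the verification that $c^\sigma u$ lies in $e^\sigma\hat\beta(y)e$ and is $(e^\sigma,e)$-invertible, the preservation of $h_\beta$ and $\hat\beta$ via the identity $c+c^\sigma u=h_\beta(y,y)$, and the collapse of the bracket $M$ to zero are exactly the computations Reiter's argument requires once one tracks the distinct idempotents $e$ and $e^\sigma$ and the non-central unit $u$.
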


    \begin{proof}
        This is similar to
        the proof of  \cite[Pr.~1.3]{Reiter75}; replace the usual inverses with $(e^\sigma,e)$-inverses.
    \end{proof}

    \begin{lem}\label{LM:reflections-manipulations}
        Let $e,f\in A$ be  idempotents.
        \begin{enumerate}
            \item[(i)] If $e\sim f$, then $e$-reflections and $f$-reflections coincide.
            \item[(ii)] If $ef=fe=0$, then the composition of an $e$-reflection and an $f$-reflection is an $(e+f)$-reflection.
            Specifically, $s_{y,e,c}s_{z,f,d}=s_{y+z,e+f,c+d+{h}(y,z)}$.
        \end{enumerate}
    \end{lem}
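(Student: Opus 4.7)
For (i), the plan is to exhibit the given $e$-reflection explicitly as an $f$-reflection. Pick any $(e,f)$-invertible $a \in eAf$, so $a^\circ \in fAe$ satisfies $aa^\circ = e$ and $a^\circ a = f$, and set $y' := ya \in Pf$ and $c' := a^\sigma c a$. Using $\beta(y',y') = a^\sigma \beta(y,y) a$ and $a^\sigma (e^\sigma \Lambda e) a \subseteq f^\sigma \Lambda f$, one sees $c' \in f^\sigma \hat{\beta}(y')f$. A direct computation (using $(a^\sigma)^\circ = (a^\circ)^\sigma$, valid since transposing $aa^\circ=e$ and $a^\circ a=f$ under $\sigma$ gives the dual identities) shows $a^\circ c^\circ (a^\circ)^\sigma$ is an $(f,f^\sigma)$-inverse for $c'$. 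Finally, expanding $s_{y',f,c'}(x) = x - y'\, (c')^\circ\, h_\beta(y',x)$ with $h_\beta(y',x) = a^\sigma h_\beta(y,x)$ and collapsing $ya\cdot a^\circ = ye = y$ and $c^\circ (a^\circ)^\sigma a^\sigma = c^\circ e^\sigma = c^\circ$ gives $s_{y,e,c}(x)$.

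For (ii), the plan is to verify the formula $s_{y,e,c}\, s_{z,f,d} = s_{y+z,\,e+f,\,\alpha}$ with $\alpha := c+d+h_\beta(y,z)$ by checking three things in turn. First, admissibility: $y+z \in P(e+f)$ is immediate from $ye=y$, $zf=z$, $ef=fe=0$; and using \eqref{EQ:beta-h-connection} we have $\beta(y+z,y+z) - \beta(y,y) - \beta(z,z) - h_\beta(y,z) = -\beta(z,y) + \beta(z,y)^\sigma u$. Writing $\eta_0 := -\beta(z,y) \in f^\sigma A e$, the discrepancy $\eta_0 - \eta_0^\sigma u \in \Lambda^{\min}$ is fixed on left and right by $(e+f)^\sigma$ and $(e+f)$ respectively: indeed $(e+f)^\sigma \eta_0 (e+f) = f^\sigma \eta_0 e = \eta_0$ using $e^\sigma f^\sigma = (fe)^\sigma = 0$, and symmetrically for $\eta_0^\sigma u \in e^\sigma A f$. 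Hence the discrepancy lies in $(e+f)^\sigma \Lambda(e+f)$, and combined with $c - \beta(y,y) \in e^\sigma \Lambda e$ and $d - \beta(z,z) \in f^\sigma \Lambda f$, one concludes $\alpha \in (e+f)^\sigma \hat\beta(y+z)(e+f)$.

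Second, invertibility: let $\eta := h_\beta(y,z) \in e^\sigma A f$. The candidate $(e+f)^\sigma$-to-$(e+f)$ inverse is
\[
\alpha^\circ := c^\circ + d^\circ - c^\circ \eta d^\circ.
\]
The relations $ef = fe = 0$ force $cd^\circ = dc^\circ = 0$, $\eta c^\circ = 0$, and $d^\circ \eta = 0$ (each by sandwiching a factor $ef$ or $(ef)^\sigma$). Expanding $\alpha \alpha^\circ$ and $\alpha^\circ \alpha$, all mixed terms collapse and one is left with $cc^\circ + dd^\circ = (e+f)^\sigma$ and $c^\circ c + d^\circ d = e+f$ respectively. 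Third, the formula itself: applying $s_{y,e,c}\circ s_{z,f,d}$ to $x$ and $s_{y+z,e+f,\alpha}$ to $x$, the same vanishing relations ($c^\circ h_\beta(z,x) = 0$, $d^\circ h_\beta(y,x) = 0$, $z c^\circ \eta d^\circ = 0$) reduce both expressions to
\[
x - yc^\circ h_\beta(y,x) - zd^\circ h_\beta(z,x) + yc^\circ h_\beta(y,z)\, d^\circ h_\beta(z,x).
\]

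The only step that is not a mechanical one-sided-ideal bookkeeping is the $\Lambda$-check in part (ii): one needs the form-parameter axiom to absorb $\eta_0 - \eta_0^\sigma u$ into $(e+f)^\sigma\Lambda(e+f)$. Everything else is a direct calculation, organized by the observation that $ef=fe=0$ kills every cross-term of the wrong block type.
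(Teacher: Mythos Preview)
Your proof is correct and follows the same approach as the paper: in (i) you exhibit the explicit substitution $s_{y,e,c}=s_{ya,f,a^\sigma ca}$, and in (ii) you verify the same three items (admissibility of $c+d+h(y,z)$, the explicit inverse $c^\circ+d^\circ-c^\circ h(y,z)d^\circ$, and the pointwise formula) that the paper does, just with the $\Lambda$-check spelled out more explicitly rather than handled via the coset identity $\hat\beta(y+z)=\hat\beta(y)+\hat\beta(z)+h_\beta(y,z)$. One harmless sign slip: the displayed difference $\beta(y+z,y+z)-\beta(y,y)-\beta(z,z)-h_\beta(y,z)$ actually equals $\beta(z,y)-\beta(z,y)^\sigma u$, not its negative, but since $\Lambda^{\min}$ is closed under negation this does not affect the argument.
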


    \begin{proof}
        (i) Let $a\in eAf$ be an $(e,f)$-invertible element. It is a straightforward computation
        to verify that $s_{y,e,c}=s_{ya,f,a^\sigma ca}$, which proves the claim.

        (ii) Throughout the proof, we
        shall make repeated implicit usage of the fact that $yf=ze=0$ and
        $fc^\circ=c^\circ f^\sigma=ed^\circ=d^\circ e^\sigma=0$,
        which easily follows from $ef=fe=0$.

        Observe first that
        \begin{eqnarray*}
        c+d+{h}(y,z) &\in & e^\sigma\hat{\beta}(y)e+f^\sigma\hat{\beta}(z)f+e^\sigma{h}(y,z)f\\
        &\subseteq&
        (e+f)^\sigma(\hat{\beta}(z)+\hat{\beta}(y)+{h}(y,z))(e+f)\\
        &=&(e+f)^\sigma\hat{\beta}(z+y)(e+f)
        \end{eqnarray*}
        and that $c+d+{h}(y,z)$ is $((e+f)^\sigma,e+f)$-invertible with inverse
        $c^\circ+d^\circ -c^\circ {h}(y,z)d^\circ$. Thus, $r:=s_{y+z,e+f,c+d+{h}(y,z)}$
        is an $(e+f)$-reflection. Now,
        for all $x\in P$, we have
        \begin{eqnarray*}
            rx &=& x- (y+z)(c^\circ+d^\circ-c^\circ{h}(y,z)d^\circ){h}(y+z,x)\\
            &=& x-(yc^\circ +zd^\circ-yc^\circ{h}(y,z)d^\circ)({h}(y,x)+{h}(z,x))\\
            &=& x-zd^\circ{h}(z,x)-yc^\circ{h}(y,x)+y{c^\circ}{h}(y,z)d^\circ{h}(z,x)\\
            &=& (x-zd^\circ{h}(z,x))-yc^\circ{h}(y,x-zd^\circ{h}(z,x))\\
            &=& s_{y,e,c}(s_{z,f,d}x)\ ,
        \end{eqnarray*}
        as required.
    \end{proof}

    \begin{lem}\label{LM:action-of-reflections}
        Let $e\in A$ be an idempotent and
        let $x,y\in Pe$.
        \begin{enumerate}
            \item[(i)] If $c:=h_\beta(x-y,x)$ is $(e^\sigma,e)$-invertible,
            then $s_{x-y,e,c}(x)=y$.
            \item[(ii)] If there exist $z\in Pe$ and $(e^\sigma,e)$-invertible
            $c\in\hat{\beta}(z)$ such that $d:=h_\beta(y,w)$ is $(e^\sigma,e)$-invertible
            for $w=y-s_{z,e,c}(x)=y-x+zc^\circ {h}_\beta(z,x)$, then
            $s_{w,e,d}s_{z,e,c}(x)=y$.
        \end{enumerate}
    \end{lem}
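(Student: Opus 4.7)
The engine of both parts is the identity
\[
d + d^\sigma u \;=\; h_\beta(w,w) \qquad (\ast)
\]
valid for any $d\in A$ with $d-\beta(w,w)\in\Lambda$: writing $d=\beta(w,w)+\lambda$ with $\lambda\in\Lambda\subseteq\Lambda^{\max}$ (so $\lambda^\sigma u=-\lambda$), one computes $d^\sigma u=\beta(w,w)^\sigma u-\lambda$, hence $d+d^\sigma u=\beta(w,w)+\beta(w,w)^\sigma u=h_\beta(w,w)$.

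Given this, part (i) is a one-line unwinding: with $c=h_\beta(x-y,x)$,
\[
s_{x-y,e,c}(x) \;=\; x - (x-y)\, c^\circ\, h_\beta(x-y,x) \;=\; x - (x-y)\, c^\circ c \;=\; x - (x-y)\, e \;=\; y,
\]
using $c^\circ c=e$ (definition of the $(e^\sigma,e)$-inverse) and $(x-y)e=x-y$ (since $x,y\in Pe$).

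For part (ii), write $s_1=s_{z,e,c}$ and $s_2=s_{w,e,d}$. Since $s_1(x)=y-w$ by definition of $w$,
\[
s_2 s_1(x) \;=\; (y-w) - w\, d^\circ\, h_\beta(w,y-w) \;=\; y - w + w\, d^\circ\bigl(h_\beta(w,w) - h_\beta(w,y)\bigr).
\]
Since $h_\beta$ is $u$-hermitian, $h_\beta(w,y)=h_\beta(y,w)^\sigma u=d^\sigma u$; invoking $(\ast)$ with the lift $d=h_\beta(y,w)$, the parenthesized factor reduces to $h_\beta(w,w)-d^\sigma u=d$. Hence $s_2 s_1(x)=y-w+w\, d^\circ d=y-w+we=y$.

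The main obstacle is not the algebra but verifying the form-parameter conditions that ensure the quasi-reflections are well-defined, namely $c\in e^\sigma\hat\beta(x-y)e$ in (i) and $d\in e^\sigma\hat\beta(w)e$ in (ii) (the latter being precisely what powers $(\ast)$ above). Both hold under the standing Witt-type hypothesis $\hat\beta(x)=\hat\beta(y)$: for (i), expand $h_\beta(x-y,x)-\beta(x-y,x-y)$ and apply the identity $a-a^\sigma u\in\Lambda^{\min}$ to the cross-terms to reduce to $\beta(x,x)-\beta(y,y)\in\Lambda$; for (ii), expand $\hat\beta(w)=\hat\beta(y-s_1(x))$ via \eqref{EQ:beta-h-connection} and use the isometry property $\hat\beta(s_1(x))=\hat\beta(x)$. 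The elementary observation $e^\sigma Ae\cap\Lambda=e^\sigma\Lambda e$ then promotes membership in $\Lambda$ to membership in $e^\sigma\Lambda e$.
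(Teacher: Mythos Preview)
Your computations are correct and constitute precisely the direct verification that the paper defers to Reiter's Lemmas~1.4--1.5 for. One clarification on your final paragraph: the condition $\hat\beta(x)=\hat\beta(y)$ is not a hypothesis of the lemma as stated. Rather, the lemma's use of the symbols $s_{x-y,e,c}$ and $s_{w,e,d}$ already presupposes that $c\in e^\sigma\hat\beta(x-y)e$ and $d\in e^\sigma\hat\beta(w)e$, since a quasi-reflection $s_{v,e,c}$ is only defined when $c\in e^\sigma\hat\beta(v)e$. Once those memberships are granted, your identity $(\ast)$ holds automatically and the computation finishes with no further assumption. What your last paragraph really establishes is that $\hat\beta(x)=\hat\beta(y)$ is a \emph{sufficient} condition for these form-parameter memberships---and this is exactly how the lemma is invoked (with $y=\psi x$ for an isometry $\psi$) in the proof of Lemma~\ref{LM:main-lemma}. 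So your argument is sound; just be aware that the well-definedness of the quasi-reflections is built into the statement rather than something extra you must supply.
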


    \begin{proof}
        This is essentially the same as the proofs of Lemma~1.4 and Lemma~1.5
        in \cite{Reiter75}; replace the usual inverses with $(e^\sigma,e)$-inverses.
    \end{proof}

    \begin{remark}\label{RM:conjugation-does-not-affect-reflections}
        (i) When applying conjugation (see Proposition~\ref{PR:conjugation-of-unitary-rings})
        with respect  to $v\in \units{A}$,
        $e$-reflections remain $e$-reflections. Indeed, it is straightforward to check
        that $s^{[\beta]}_{y,e,c}=s^{[v\beta]}_{y,e,vc}$ (note that if
        $a^{\sigma'}:=va^\sigma v^{-1}$, then $\efinv{e^{\sigma'}}{e}{vc}=c^\circ v^{-1}$).
        (In Reiter's setting, which assumes $e=e^\sigma$, $e$-reflections
        are preserved only when $e$ commutes with $v$, for otherwise, $e$ is not invariant
        under the conjugation of $\sigma$ by $v$.)

        (ii) Let $e,f\in A$ be  idempotents with $e=e^\sigma$ and  $f\in eAe$. Then $e$-transfer
        (see Proposition~\ref{PR:transfer-special-case})
        sends $f$-reflections of $(P,[\beta])$ to $f$-reflections of $(Pe,[\beta_e])$.
        Indeed, $s_{y,e,c}^{[\beta]}|_{Pe}=s_{y,e,c}^{[\beta_e]}$.
    \end{remark}

\section{Witt's Extension Theorem}
\label{section:Witt-thm}

    Using  methods of Reiter \cite{Reiter75} and the notion of quasi-reflections above, we
    now show that every isometry between subspaces of a unimodular quadratic space over a semiperfect
    ring can be extended to an isometry of the whole quadratic space.
    Furthermore, with small exception, the resulting isometry is a product of quasi-reflections.
    We compare our results with those of Reiter in Remark~\ref{RM:comparison}.

\subsection{General Setting}
\label{subsection:general-setting}

    We set some general notation that will be used throughout.
    Let $(A,\sigma,u,\Lambda)$ be a semiperfect unitary ring.
    For $P\in\rproj{A}$, set $\quo{P}:=P/P\Jac(A)$. In particular, $\quo{A}=A/\Jac(A)$.
    We shall occasionally view $\quo{P}$ as a right $\quo{A}$-module.
    The image of $x\in P$ in $\quo{P}$ will be denoted by $\quo{x}$.
    Note that $P,Q\in\rproj{A}$
    are isomorphic if and only if $\quo{P}\cong\quo{Q}$, because $P$ is a \emph{projective
    cover} of $\quo{P}$ and projective covers are unique up to isomorphism.

\medskip

    Let $\quo{\Lambda}=\{\quo{\lambda}\where \lambda\in\Lambda\}$ and let $\quo{\sigma}$
    be the map induced by $\sigma$ on $\quo{A}$. Then $(\quo{A},\quo{\sigma},\quo{u},\quo{\Lambda})$
    is a semisimple unitary ring, hence, by Proposition~\ref{PR:factorization-of-unitary-rings}, it factors into a product
    \[
    (\quo{A},\quo{\sigma},\quo{u},\quo{\Lambda})\cong\prod_{i=1}^\ell(A_i,\sigma_i,u_i,\Lambda_i)
    \]
    with each $(A_i,\sigma_i,u_i,\Lambda_i)$ a simple artinian unitary ring (see~\ref{subsection:simple-art-unit-rings}).
    We write $A_i=\nMat{D_i}{n_i}$ with $D_i$ a division ring, or a product of a division
    ring and its opposite.

\medskip

    By Proposition~\ref{PR:conjugation-to-standard-form}, for every $1\leq i\leq \ell$, there exists
    $v_i\in\units{A_i}$ such that the conjugation (see Proposition~\ref{PR:conjugation-of-unitary-rings})
    of
    $(\sigma_i,u_i,\Lambda_i)$ by  $v_i$ is \emph{standard} (see~\ref{subsection:simple-art-unit-rings}).
    Choose $v\in \units{A}$ whose image in $A_i$ is $v_i$.
    Then, by conjugating   $(\sigma,u,\Lambda)$ with $v$,
    we may assume that $(A_i,\sigma_i,u_i,\Lambda_i)$ is in standard form for all $i$.
    Note that conjugation preserves quasi-reflections by
    Remark~\ref{RM:conjugation-does-not-affect-reflections}(i), so this
    is allowed if our goal is to prove that certain isometries extend to  a product of quasi-reflections.

    Now, let $\veps_i$ denote the standard matrix unit $e_{11}$ in $A_i$. Then $\veps_1,\dots,\veps_\ell$
    are orthogonal $\quo{\sigma}$-invariant idempotents. Since $\Jac(A)$ is idempotent lifting,
    we can lift $\veps_1,\dots,\veps_\ell$ to orthogonal idempotents $e_1,\dots,e_\ell\in A$.
    The idempotents $e_i$ may not be invariant under $\sigma$, but we have $e_iA\cong e_i^\sigma A$
    as right $A$-modules (because $\quo{e_iA}=\quo{e_i^\sigma A}$), and hence $e_i\sim e_i^\sigma$.

    Next, we set
    \[
    (A_{(i)},\sigma_{(i)},u_{(i)},\Lambda_{(i)})=(\veps_iA_i\veps_i,\sigma_i|_{\veps_iA_i\veps_i},\veps_iu_i,\veps_i\Lambda_i\veps_i)\ .
    \]
    Note that by Proposition~\ref{PR:orth-not-affected-by-conj}, $(A_i,\sigma_i,u_i,\Lambda_i)$
    is split-orthogonal if and only if $(A_{(i)},\sigma_{(i)},u_{(i)},\Lambda_{(i)})$ is split-orthogonal,
    and in this case, $A_{(i)}$ is a field, $\sigma_{(i)}=\id_{A_{(i)}}$, $u_{(i)}=1$ and $\Lambda_{(i)}=0$.
    Also note that $A_{(i)}\cong D_i$.

\medskip

    Let $(P,[\beta])$ be a quadratic space over $(A,\sigma,u,\Lambda)$.
    Then $(P,[\beta])$ gives rise to a quadratic space $(\quo{P},[\quo{\beta}])$
    over $(\quo{A},\quo{\sigma},\quo{u},\quo{\Lambda})$; the map $\quo{\beta}$ is defined
    by
    \[
    \quo{\beta}(\quo{x},\quo{y})=\quo{\beta(x,y)}\qquad\forall\, x,y\in P\ .
    \]
    Since $(\quo{A},\quo{\sigma},\quo{u},\quo{\Lambda})$ factors into  a product
    of unitary rings, the datum of $(\quo{P},[\quo{\beta}])$ is equivalent
    to the datum of  quadratic spaces $(P_i,[\beta_i])_{i=1}^\ell$
    over $(A_i,\sigma_i,u_i,\Lambda_i)_{i=1}^\ell$. Specifically, if we write $\quo{P}=\prod_{i=1}^\ell P_i$
    with $P_i$ a right $A_i$-module,
    then $\beta_i$ is just the restriction of $\beta$ to the copy of $P_i$ in $\quo{P}$.
    We further set $P_{(i)}=P_i\veps_i\in\rproj{A_{(i)}}$ and let $\beta_{(i)}=(\beta_i)_{\veps_i}:=\beta_i|_{P_{(i)}\times P_{(i)}}$.
    Recall from Proposition~\ref{PR:transfer-special-case} that $(P_{(i)},[\beta_{(i)}])$ is a quadratic space over
    $(A_{(i)},\sigma_{(i)},u_{(i)},\Lambda_{(i)})$, which is the $\veps_i$-transfer
    of $(P_i,\beta_i)$. As a result, we have
    \[
    [\beta_i]=[0]\quad\iff\quad [\beta_{(i)}]=[0]\ .
    \]
    Next, let
    \[
    \pi_i:P\to P_i
    \]
    denote the map sending $x\in P$ to its image in $P_i$. We clearly have $\pi_i(Pe_i)=P_{(i)}$
    and $\pi_i\beta(x,y)=\beta_i(\pi_ix,\pi_iy)$.

    Keeping the previous setting,
    let $Q\subseteq P$ be a submodule. Then ${h}_\beta$ induces a map
    \[\lAd{Q}=\lAd{[\beta],Q}:P\to Q^*:=\Hom_A(Q,A)\]
    sending $x\in P$ to $[y\mapsto h_\beta(x,y)]\in Q^*$.
    This map is evidently onto when $(P,[\beta])$ is unimodular and $Q$ is a summand of $P$.
    Lastly,  set
    \[
    Q^\perp=Q^{\perp[\beta]}=\{x\in P\suchthat h_\beta(x,P)=0\}\ ,
    \]
    and let $\beta|_Q$ denote
    the restriction of $\beta$ to $Q\times Q$.

\subsection{Lemmas}

    Before giving the main result, we collect several lemmas.

    \begin{lem}\label{LM:symmetric-modules}
        Let $P\in\rproj{A}$. Then $P\cong P^*$ $\iff$ $P$ is isomorphic to a direct sum of
        copies of the modules $e_1A,\dots,e_\ell A$. (See \ref{subsection:unitary-rings} for the definition of $P^*$.)
    \end{lem}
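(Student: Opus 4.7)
The plan is to reduce to the semisimple quotient $\quo{A}=A/\Jac(A)$ and analyze each simple factor. Since dualization of finitely generated projectives commutes with base change to $\quo A$, giving a natural isomorphism $\quo{P^*}\cong(\quo P)^*$, and since finitely generated projective $A$-modules are determined up to isomorphism by their reductions mod $\Jac(A)$, we have $P\cong P^*$ iff $\quo P\cong(\quo P)^*$. Likewise, $P$ is a direct sum of copies of the $e_iA$'s iff $\quo P$ is a direct sum of copies of the $\veps_i\quo A$'s. So one may work inside $\quo A$ and replace $e_i$ by $\veps_i$.

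Using the factorization $\quo A\cong\prod_{i=1}^{\ell}A_i$, write $\quo P=\bigoplus_i P_i$; the claim reduces to showing, for each simple artinian unitary factor $(A_i,\sigma_i,u_i,\Lambda_i)$, that $P_i\cong P_i^*$ iff $P_i$ is a direct sum of copies of $\veps_iA_i$. Two cases arise, following~\ref{subsection:simple-art-unit-rings}.

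If $A_i$ is simple, then $\veps_iA_i$ is the unique simple right $A_i$-module, so $P_i$ is automatically a direct sum of copies of it, and a dimension count via the $\veps_i$-transfer to $\veps_iA_i\veps_i\cong D_i$ shows that $P_i^*\cong P_i$. If instead $A_i\cong B_i\times B_i^{\op}$ with $\sigma_i$ the exchange involution, then $\veps_i$ splits as a sum $\veps_i^{(1)}+\veps_i^{(2)}$ of two non-equivalent primitive idempotents, one in each factor. A direct computation---using the identification $(fA_i)^*\cong f^{\sigma_i}A_i$ via $\phi\mapsto \phi(f)^{\sigma_i}u_i$, whose right $A_i$-linearity rests on $u_i^{\sigma_i}u_i=1$ and $a^{\sigma_i\sigma_i}=u_iau_i^{-1}$---yields $(\veps_i^{(1)}A_i)^*\cong\veps_i^{(2)}A_i$ and vice versa. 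Writing $P_i\cong(\veps_i^{(1)}A_i)^{m_1}\oplus(\veps_i^{(2)}A_i)^{m_2}$, Krull--Schmidt then gives $P_i\cong P_i^*$ iff $m_1=m_2$, i.e.\ iff $P_i$ is a direct sum of copies of $\veps_i^{(1)}A_i\oplus\veps_i^{(2)}A_i\cong\veps_iA_i$.

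The main technical point to get right is the twist by $u$ in the identification $(fA)^*\cong f^{\sigma}A$: the obvious map $\phi\mapsto\phi(f)$ gives $(fA)^*\cong Af$ as sets, but the induced right $A$-action there is $y\cdot a=a^\sigma y$, so the naive guess $y\mapsto y^\sigma$ fails to be right $A$-linear (it carries $a$ to $a^{\sigma\sigma}=uau^{-1}$), and one must append multiplication by $u$ to recover linearity. Once that formula is in hand, the two cases assemble immediately to give the equivalence.
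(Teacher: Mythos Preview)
Your proof is correct and follows essentially the same approach as the paper's. Both arguments rest on the same core observation: the indecomposable projectives are self-dual when $D_i$ is a division ring and come in dual pairs when $D_i\cong E_i\times E_i^{\op}$, so $P\cong P^*$ forces matching multiplicities in each pair. The only organizational difference is that you reduce to $\quo A$ at the outset (using $\quo{P^*}\cong(\quo P)^*$ and uniqueness of projective covers) and then work factor by factor, whereas the paper lifts the finer idempotent system $\veps_1,\dots,\veps_t,\delta_{t+1},\delta_{t+1}^\sigma,\dots$ to $A$ and argues directly there via $(f_iA)^*\cong f_i^\sigma A$ and the equivalence relations among the $f_i$'s; the content is identical.
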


    \begin{proof}
        It is straightforward to check that $(e_iA)^*\cong e_i^\sigma A$, and
        since $e_i\sim e_i^\sigma$, we have $e_i^\sigma A\cong e_i A$. This settles the direction ``$\Longleftarrow$'',
        so we turn to show the converse.

        Without loss of generality, $D_i$ is a division ring for $i\leq t$,
        and a product of a division ring and its opposite otherwise. In the latter case, $\veps_i$
        can be written as a sum of two orthogonal idempotents $\veps_i=\delta_i+\delta_i^\sigma$.
        Now, $\veps_1,\dots,\veps_t,\delta_{t+1},\dots,\delta_{\ell},\delta_{t+1}^\sigma,\dots,\delta_{\ell}^\sigma$
        is a system of orthogonal idempotents in $\quo{A}$, hence it can be lifted to a system of orthogonal idempotents
        $f_1,\dots,f_{2\ell-t}$ in $A$. Moreover, $f_1A_1,\dots,f_{2\ell-t}A$ are the indecomposable
        projective right $A$-modules, up to isomorphism (see~\ref{subsection:semiperfect}).

        Assume that $P\cong P^*$. Then $P\cong\bigoplus f_iA_i^{n_i}$ with $(n_i)_{i=1}^{2\ell-t}$ uniquely
        determined. It is straightforward to check that $(f_iA_i)^*\cong f_i^\sigma A_i$.
        By the way we have constructed $f_1,\dots,f_{2\ell-t}$, we have $f_i^\sigma\sim f_i$ if $i\leq t$,
        and $f_i^\sigma\sim f_{i+(\ell-t)}$ if $t<i\leq\ell$. Thus, $n_{i}=n_{i+(\ell-t)}$ for all $t<i\leq \ell$.
        Since $e_iA\cong f_i A\oplus f_{i+(\ell-t)}A$ for $t<i\leq \ell$, it follows
        that $P\cong \bigoplus_{i=1}^\ell (e_iA)^{n_i}$, as required.
    \end{proof}

    \begin{lem}\label{LM:onto-is-preserved}
        Let $1\leq i\leq\ell$, let $(P,[\beta])$ be a quadratic space, and let $Q,V\subseteq P$ summands of
        $P$. Then:
        \begin{enumerate}
        \item[(i)] $\lAd{[\beta],Q}(V)=Q^*$ $\derives$ $\lAd{[\beta_{(i)}],Q_{(i)}}(V_{(i)})=Q_{(i)}^*$.
        \item[(ii)] $[\beta]$ is unimodular $\derives$ $[\beta_i]$ and $[\beta_{(i)}]$ are unimodular.
        \item[(iii)]
        $V_i^{\perp[\beta_i]}=V_{(i)}^{\perp[\beta_i]}$ and $V_i^{\perp[\beta_i]}\veps_i=V_{(i)}^{\perp[\beta_{(i)}]}$.
        \end{enumerate}
        (Here, $Q_{(i)}=\pi_i(Q)\veps_i$ and $V_{(i)}=\pi_i(V)\veps_i$.)
    \end{lem}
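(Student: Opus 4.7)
My plan is to prove (ii) and (i) by a common two-step reduction — first modulo $\Jac(A)$, then via the $\veps_i$-transfer of Proposition~\ref{PR:transfer-special-case} — and then to establish (iii) by direct computation, using the simplicity of $A_i$ and the fact that the standard form forces $\veps_i^{\sigma_i}=\veps_i$ (since $\veps_i=e_{11}$ is fixed by the transpose-style involution).

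For (ii), I would begin with the $A$-linear isomorphism $\lAd{[\beta]}\colon P\to P^*$. Tensoring over $\quo{A}$ gives an isomorphism $\quo{P}\to\quo{P^*}$, and since $P$ is finitely generated projective there is a canonical $\quo{A}$-linear identification $\quo{P^*}\cong\quo{P}^*$ (where the latter is the $\quo{A}$-dual; this is trivial for $P=A$ and extends by additivity). Under this identification the reduced map coincides with $\lAd{[\quo{\beta}]}$, which then splits as $\prod_i\lAd{[\beta_i]}$ under $\quo{A}\cong\prod_{i=1}^\ell A_i$. Hence each $[\beta_i]$ is unimodular, and Proposition~\ref{PR:transfer-special-case}(ii) with $e=\veps_i$ delivers unimodularity of $[\beta_{(i)}]$. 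The proof of (i) is an extension of the same pattern: surjectivity of $\lAd{[\beta],Q}|_V\colon V\to Q^*$ is preserved by the right-exact functor $-\otimes_A\quo{A}$, so we obtain $\lAd{[\quo{\beta}],\quo{Q}}(\quo{V})=\quo{Q}^*$, and projecting to the $i$-th factor gives $\lAd{[\beta_i],Q_i}(V_i)=Q_i^*$. For the transfer step, the canonical isomorphism $Q_i^*\veps_i\cong Q_{(i)}^*$ used in the proof of Proposition~\ref{PR:transfer-special-case} sends $\lAd{[\beta_i],Q_i}(v)\veps_i$ to $\lAd{[\beta_{(i)}],Q_{(i)}}(v\veps_i)$ (a short calculation using $\veps_i^{\sigma_i}=\veps_i$), so surjectivity transfers as well.

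For (iii), the key observation is that since $A_i$ is simple with $\veps_i\ne 0$, the two-sided ideal $A_i\veps_i A_i$ equals $A_i$; writing $1=\sum_j x_j\veps_i y_j$ with $x_j,y_j\in A_i$, every $v\in V_i$ equals $\sum_j(vx_j\veps_i)y_j\in V_{(i)}A_i$, so $V_i=V_{(i)}A_i$. Right-$A_i$-linearity of $h_{\beta_i}(x,-)$ then shows that vanishing on $V_i$ and vanishing on $V_{(i)}$ are equivalent, which is the first equality. For the second equality, given $x\in V_i^{\perp[\beta_i]}$ and $q\in V_{(i)}$ we have $h_{\beta_{(i)}}(x\veps_i,q)=\veps_i^{\sigma_i}h_{\beta_i}(x,q)=0$, so $V_i^{\perp[\beta_i]}\veps_i\subseteq V_{(i)}^{\perp[\beta_{(i)}]}$; conversely, any $y\in V_{(i)}^{\perp[\beta_{(i)}]}$ satisfies $y=y\veps_i\in P_{(i)}$ and $h_{\beta_i}(y,V_i)=h_{\beta_i}(y,V_{(i)})\cdot A_i=0$, so $y=y\veps_i\in V_i^{\perp[\beta_i]}\veps_i$. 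The delicate point throughout is in (i): one must carefully verify that both the reduction $-\otimes_A\quo{A}$ and the $\veps_i$-transfer intertwine the adjoint map $\lAd{[\beta],Q}$ with the analogous adjoint in the new setting, which is essentially a bookkeeping exercise once the canonical identifications of duals are in place.
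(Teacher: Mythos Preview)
Your argument is correct. The route you take for (i) and (ii) differs from the paper's, though the underlying content is close.

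For (i), the paper works element-by-element in the target: given $\psi\in Q_{(i)}^*$, it lifts $\psi$ first to $\psi'\in\Hom_{A_i}(Q_i,\veps_iA_i)$ via the Morita equivalence $U\mapsto U\veps_i$, then to $\vphi\in\Hom_A(Q,e_i^\sigma A)\subseteq Q^*$ using projectivity of $Q$ (to lift along the surjection $\pi_i\colon e_i^\sigma A\to\veps_iA_i$), and finally finds a preimage $x\in V$ from the hypothesis. Your approach is the functorial dual: push the surjection $V\to Q^*$ forward through $-\otimes_A\quo{A}$, identify $\quo{Q^*}\cong\quo{Q}^*$ (using that $Q$ is a finitely generated projective summand of $P$), project to the $i$-th factor, and then apply the $\veps_i$-transfer. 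Both arguments ultimately rest on the same two ingredients (projectivity of $Q$ and the Morita equivalence), but the paper's is more concrete while yours hides the lifting step inside right-exactness of the tensor product.

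For (ii), the paper derives it as the special case $Q=V=P$ of (i), obtaining surjectivity of $\lAd{[\beta_{(i)}]}$ and then injectivity from a length count; unimodularity of $[\beta_i]$ then follows from Proposition~\ref{PR:transfer-special-case}(ii). This is a small economy over your independent argument. Your proof of (iii) is essentially the paper's one-line hint (``use $V_{(i)}A_i=V_i$'') written out in full, and is fine.
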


    \begin{proof}
        (i) Let $\psi\in\Hom_{A_{(i)}}(Q_{(i)},A_{(i)})=\Hom_{\veps_iA_i\veps_i}(Q_i\veps_i,\veps_iA\veps_i)$.
        As explained in the proof of Proposition~\ref{PR:transfer-special-case},
        the functor $U\mapsto U\veps_i:\rproj{A_i}\to \rproj{\veps_iA_i\veps_i}$ is an equivalence,
        hence there exists unique $\psi'\in \Hom_{A_i}(Q_i,\veps_iA_i)$
        with $\psi'|_{Q_{(i)}}=\psi$.
        Since $Q$ is projective, there is $\vphi\in\Hom_A(Q,e_i^\sigma A)$ with $\pi_i\circ \vphi=\psi'\circ\pi_i$.
        By assumption, there is  $x\in V$ such that $\lAd{[\beta],Q}(x)=\vphi$, and
        by replacing $x$ with $xe_i$ we may assume $x=xe_i\in V_ie_i$.
        Now, for all $y\in Qe_i$, we have
        $\lAd{[\beta_{(i)}],Q_{(i)}}(\pi_i x)(\pi_i y)=\beta_{(i)}(\pi_i x,\pi_i y)=
        \beta_i(\pi_ix,\pi_i y)
        =\pi_i(\beta(x,y))=
        \pi_i(\lAd{[\beta],Q}(x)(y))=\pi_i(\vphi y)=\psi (\pi_i y)$,
        so $\lAd{[\beta_{(i)}],Q_{(i)}}(\pi_i x)=\psi$.

        (ii) Putting $Q=V=P$ in (i) implies that $\lAd{[\beta_{(i)}]}$ is onto.
        Since $P_{(i)}^*$ and $P_{(i)}$ have the same length, $\lAd{[\beta_{(i)}]}$ is also injective,
        so $[\beta_{(i)}]$ is unimodular.
        By Proposition~\ref{PR:transfer-special-case}, $[\beta]$ is also unimodular.

        (iii)
        This is routine. Use the fact that $V_{(i)}A_i=V_iA_i\veps_iA_i=V_iA_i=V_i$.
    \end{proof}

    \begin{lem}\label{LM:dual-decomp}
        Let $P\in\rproj{A}$, and assume $P^*=U\oplus U'$.
        Then $P$ factors as a direct sum $Q\oplus Q'$ such
        that $U=\{f\in P^*\suchthat f(Q')=0\}$ and $U'=\{f\in P^*\suchthat f(Q)=0\}$.
    \end{lem}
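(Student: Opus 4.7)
The approach is to use the double-dual isomorphism $\omega_P\colon P\to P^{**}$ from Section~\ref{subsection:unitary-rings} (see also Remark~\ref{RM:herm-category}) to transport the splitting $P^*=U\oplus U'$ through $*$ back to a splitting of $P$. Let $\iota_U,\iota_{U'},\pi_U,\pi_{U'}$ denote the inclusions and projections of $P^*=U\oplus U'$; since $U$ and $U'$ are summands of the finitely generated projective module $P^*$, they lie in $\rproj{A}$, and applying $(-)^*$ to the splitting identities $\pi_U\iota_U=\id_U$, $\pi_{U'}\iota_{U'}=\id_{U'}$, $\iota_U\pi_U+\iota_{U'}\pi_{U'}=\id_{P^*}$ yields a direct sum decomposition
\[
P^{**}=\pi_U^*(U^*)\oplus\pi_{U'}^*({U'}^*).
\]

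I would then define $Q=\omega_P^{-1}(\pi_U^*(U^*))$ and $Q'=\omega_P^{-1}(\pi_{U'}^*({U'}^*))$, so that $P=Q\oplus Q'$. The key observation is that $\phi\in P^{**}$ lies in $\pi_U^*(U^*)$ if and only if $\phi\circ\iota_{U'}=0$, i.e.\ $\phi$ vanishes on $U'\subseteq P^*$. Unwinding this via the formula $(\omega_Px)(g)=g(x)^\sigma u$ gives
\[
Q=\{x\in P\suchthat g(x)=0\;\forall\,g\in U'\},
\]
and symmetrically $Q'=\{x\in P\suchthat f(x)=0\;\forall\,f\in U\}$. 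Consequently, the inclusions $U'\subseteq\{g\in P^*\suchthat g(Q)=0\}$ and $U\subseteq\{f\in P^*\suchthat f(Q')=0\}$ are immediate from these descriptions.

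For the reverse inclusions, suppose $g\in P^*$ satisfies $g(Q)=0$; writing $g=\iota_U\pi_U g+\iota_{U'}\pi_{U'}g$, it suffices to show $\pi_U g=0$. For any $\chi\in U^*$, the element $\pi_U^*(\chi)=\chi\circ\pi_U$ lies in $\omega_P(Q)$ by construction, so there is $x\in Q$ with $\omega_P(x)=\chi\circ\pi_U$, which gives $\chi(\pi_U g)=(\omega_Px)(g)=g(x)^\sigma u=0$. Since $U\in\rproj{A}$, the map $\omega_U\colon U\to U^{**}$ is itself an isomorphism, so the vanishing of $\chi(\pi_U g)$ for every $\chi\in U^*$ forces $\pi_U g=0$, as required. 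The symmetric argument gives $\{f\in P^*\suchthat f(Q')=0\}\subseteq U$.

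I do not foresee any real obstacle: the lemma is a formal consequence of $(-)^*$ being a duality on $\rproj{A}$ via $\omega$. The only minor care required is with the sesquilinear twist $(-)^\sigma u$ in the definition of $\omega_P$, which enters the computations cosmetically but plays no essential role.
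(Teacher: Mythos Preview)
Your proof is correct and follows essentially the same approach as the paper: both pull the decomposition $P^*=U\oplus U'$ back through the natural isomorphism $\omega_P\colon P\to P^{**}$ to define $Q,Q'$, and both use reflexivity of the summands (the paper via the implicit equivalence ``$f\in U\iff \psi f=0$ for all $\psi\in U'^*$'', you via injectivity of $\omega_U$) to obtain the desired identifications. Your write-up is a bit more explicit about the duality formalism, but the argument is the same.
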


    \begin{proof}
        Let $\omega=\omega_P:P\to P^{**}$ (see~\ref{subsection:unitary-rings} for the definition).
        We identify $P^{**}$ with $U^*\oplus U'^*$ via $g\mapsto (g|_U,g|_{U'})$.
        Let $Q=\omega^{-1}(U^*)$ and $Q'=\omega^{-1}(U'^*)$. We clearly have $P=Q\oplus Q'$.
        Furthermore, for
        $f\in P^*$, we have $f\in U$ $\iff$ $\psi f=0$ for all $\psi\in U'^*$
        $\iff$ $(\omega x)f=0$ for all $x\in Q'$ $\iff$ $(fx)^\sigma u=0$
        for all $x\in Q'$ $\iff $ $fx=0$ for all $x\in Q'$.
        Thus,
        $U=\{f\in P^*\suchthat f(Q')=0\}$, and likewise, $U'=\{f\in P^*\suchthat f(Q)=0\}$.
    \end{proof}

    \begin{lem}\label{LM:F-two-squared}
        Assume $A\cong \F_2\times \F_2$ and $\sigma$ is the exchange involution
        (this forces $u=1$ and $\Lambda=\{0,1\}$).
        Let $(P,[\beta])$ be a quadratic space over $A$, let $V\subseteq P$ be a submodule,
        and let $x\in P$, $z\in V$ be such that $h_\beta(x,z)=1$. Then the following conditions
        are equivalent:
        \begin{enumerate}
            \item[(a)] There is no $z'\in V$  with $h_\beta(x,z')=1$ and $\hat{\beta}(z')=\Lambda=\{0,1\}$.
            \item[(b)] $h_\beta(z,z)=1$ and $[\beta|_{z^\perp\cap V}]=[0]$.
        \end{enumerate}
        In this case, $\lAd{V}(V)\cong A$.
    \end{lem}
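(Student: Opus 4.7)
The plan is to exploit the concrete structure of $(A,\sigma,u,\Lambda)=(\F_2\times\F_2,\,\mathrm{exchange},\,1,\{(0,0),(1,1)\})$ by decomposing everything along the idempotents $(1,0)$ and $(0,1)$, which are interchanged by $\sigma$. I would split every $P\in\rproj{A}$ as $P_+\oplus P_-$ with $P_\pm:=P\cdot(1,0)$, resp.\ $P\cdot(0,1)$, being $\F_2$-vector spaces; $V$ and $z^\perp\cap V$ split compatibly. Since $(1,0)^\sigma A(1,0)=0$, any sesquilinear $\beta$ vanishes on $P_\pm\times P_\pm$, and is therefore determined by two $\F_2$-bilinear forms $\beta_\pm\colon P_\pm\times P_\mp\to\F_2$, while $h_\beta$ is determined by the single bilinear form $h_+:=\beta_++\beta_-^\top\colon P_+\times P_-\to\F_2$. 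Crucially, the identification $A/\Lambda\cong\F_2$ via $(a,b)+\Lambda\mapsto a+b$ yields $\hat\beta(v)=h_+(v_+,v_-)$ for $v=v_++v_-$, and $[\beta|_W]=[0]$ for a submodule $W=W_+\oplus W_-$ unfolds to $h_+|_{W_+\times W_-}=0$.

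With this translation, both (a) and (b) become statements purely about $h_+$. Let $U_+\subseteq V_+$ be the kernel of $h_+(-,x_-)|_{V_+}$ and $U_-\subseteq V_-$ the kernel of $h_+(x_+,-)|_{V_-}$; define $W_+,W_-$ analogously with $z_-,z_+$ replacing $x_-,x_+$. Writing $z'=z+u$ with $u=u_++u_-$, the condition $h_\beta(x,z')=1$ is $(u_+,u_-)\in U_+\times U_-$, and $\hat\beta(z')=0$ combined with $h_+(z_+,z_-)=1$ (which follows from applying (a) to $z'=z$) rewrites, via
\[
h_+(z'_+,z'_-)=h_+(z_+,z_-)+h_+(u_+,z_-)+h_+(z_+,u_-)+h_+(u_+,u_-),
\]
as the linear constraint $h_+(u_+,z_-)+h_+(z_+,u_-)+h_+(u_+,u_-)=1$. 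Specialising $u_-=0$ and then $u_+=0$, (a) forces $h_+(U_+,z_-)=0$ and $h_+(z_+,U_-)=0$, i.e.\ $U_\pm\subseteq W_\pm$; a dimension count using $h_+(z_+,x_-)=h_+(x_+,z_-)=1$ shows $\dim_{\F_2}U_\pm=\dim_{\F_2}W_\pm=\dim_{\F_2}V_\pm-1$, upgrading the inclusion to equality $U_\pm=W_\pm$. The residual constraint $h_+(u_+,u_-)=0$ on $U_+\times U_-=W_+\times W_-$ is exactly the second half of (b), and the argument is intended to reverse for the converse.

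For the final assertion $\lAd{V}(V)\cong A$: since $h_\beta(z,z)=1$, the map $y\mapsto y-z\cdot h_\beta(z,y)$ gives $V=zA\oplus(z^\perp\cap V)$, and $\lAd{V}(z)\cdot A\subseteq V^*$ is free of rank one because $\lAd{V}(z)(z)=1$ forces its annihilator to vanish. Under (b), $[\beta|_{z^\perp\cap V}]=[0]$ says $\beta$ is $\sigma$-hermitian on $z^\perp\cap V$ with diagonal in $\Lambda$; in characteristic $2$ this forces $h_\beta=\beta+\beta^\sigma=0$ identically on $z^\perp\cap V$. Hence $\lAd{V}$ kills this summand and $\lAd{V}(V)=\lAd{V}(z)\cdot A\cong A$.

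The main obstacle will be the upgrade from $U_\pm\subseteq W_\pm$ to genuine equality, and the corresponding reversibility needed for $(b)\Rightarrow(a)$. Both steps depend on the pairing hypothesis $h_\beta(x,z)=1$ splitting into $h_+(z_+,x_-)=h_+(x_+,z_-)=1$, so that each of $h_+(-,x_-)|_{V_+}$, $h_+(x_+,-)|_{V_-}$, $h_+(-,z_-)|_{V_+}$, $h_+(z_+,-)|_{V_-}$ is a nonzero $\F_2$-linear functional with codimension-one kernel in the appropriate $V_\pm$; the delicate point is extracting the actual coincidence of these kernels from the dimension count combined with the inclusion forced by the given hypothesis.
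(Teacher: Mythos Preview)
Your idempotent decomposition and the resulting dictionary ($h_\beta\leftrightarrow h_+$, $\hat\beta\leftrightarrow h_+(v_+,v_-)$, $[\beta|_W]=[0]\leftrightarrow h_+|_{W_+\times W_-}=0$) are correct, and the paper does not do any of this: for the equivalence $(a)\Leftrightarrow(b)$ it simply cites Reiter, and for the last assertion it gives exactly your argument (split $V=zA\oplus(z^\perp\cap V)$ using $h_\beta(z,z)=1$, then observe that $\lAd{V}$ kills the second summand). So on the last assertion your proof and the paper's coincide, while on the equivalence you are supplying content the paper outsources.

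Your argument for $(a)\Rightarrow(b)$ is complete. Applying $(a)$ to $z'=z$ gives $h_+(z_+,z_-)=1$; then writing $z'=z+u$ with $u_\pm\in U_\pm$ and specialising $u_\mp=0$ yields $U_\pm\subseteq W_\pm$. The four functionals $h_+(-,x_-)$, $h_+(-,z_-)$, $h_+(x_+,-)$, $h_+(z_+,-)$ are all nonzero on the relevant $V_\pm$ (each takes the value $1$ at $z_\mp$ or $z_\pm$), so $\dim U_\pm=\dim W_\pm=\dim V_\pm-1$, forcing $U_\pm=W_\pm$. The residual constraint is then exactly $h_+|_{W_+\times W_-}=0$, which is the second half of $(b)$.

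Your hesitation about $(b)\Rightarrow(a)$ is well-founded: as stated in the paper (with $x\in P$ rather than $x\in V$), this implication is actually \emph{false}. Under $(b)$ one computes that $h_+|_{V_+\times V_-}$ has rank one, namely $h_+(a z_++w_+,\,b z_-+w_-)=ab$ for $w_\pm\in W_\pm$; hence for $x_-\in V_-$ the kernel $U_+$ automatically equals $W_+$. But if $x_-\notin V_-$, nothing forces this. Concretely, take $V_+=\langle z_+,w_+\rangle$, $V_-=\langle z_-,w_-\rangle$, with $h_+(z_+,z_-)=1$ and all other values on $V_+\times V_-$ zero (so $(b)$ holds for $z$); enlarge $P_-$ by an element $p_-$ with $h_+(z_+,p_-)=0$, $h_+(w_+,p_-)=1$, and set $x_+=z_+$, $x_-=z_-+p_-$. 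Then $h_\beta(x,z)=1$, yet $z':=w_++z_-\in V$ satisfies $h_\beta(x,z')=1$ and $\hat\beta(z')=\Lambda$, so $(a)$ fails. Thus the equivalence needs an extra hypothesis (e.g.\ $x\in V$), under which your reversal would go through because then $(b)$ forces $U_\pm=W_\pm$ directly from the rank-one description of $h_+|_{V_+\times V_-}$. In the paper's applications only $(a)\Rightarrow(b)$ and the final assertion are genuinely used, so this defect in the stated equivalence does no damage downstream.
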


    \begin{proof}
        For the equivalence (a)$\iff$(b), see \cite[Lm.~3.8c]{Reiter75}. (In Reiter's notation, $H_f=H\cap f^\perp$ and $N$ stands for
        the zero form $[0]$.)
        When (b) holds, we have $V=zA\oplus (z^\perp \cap V)$
        since $x= (zh_\beta(z,x))+(x-zh_\beta(z,x))$ for all $x\in V$.
        This means that $\lAd{V}(V)=\lAd{V}(zA)$, which is easily seen to imply $\lAd{V}(V)\cong (zA)^*\cong A^*\cong A$.
    \end{proof}

\subsection{Witt's Extension Theorem}

    We are now in position to phrase and prove an analogue of Witt's Extension Theorem
    (Theorems~\ref{TH:Witt-I} and~\ref{TH:Witt-II}). Following are several immediate (and less technical) corollaries.
    We compare our results with those of Reiter \cite{Reiter75} in Remark~\ref{RM:comparison} below.

    \begin{thm}\label{TH:Witt-I}
        Let $(P,[\beta])$ be a quadratic space, let $Q,S,V$ be summands of
        $P$, and let $\psi:(Q,[\beta|_Q])\to (S,[\beta|_S])$ be an isometry.
        Assume that the following holds:
        \begin{enumerate}
            \item[(1a)] $\lAd{Q}(V)=Q^*$ and $\lAd{S}(V)=S^*$.
            \item[(1b)] $\psi x-x\in V$ for all $x\in Q$.
            \item[(1c)] $Q\cong Q^*$ and $S\cong S^*$.
            \item[(2a)] If
            $(A_i,\sigma_i,u_i,\Lambda_i)$ is split-orthogonal, then  $Q_i=0$ or $[\beta_i|_{V_i}]\neq 0$.
            \item[(2b)] If $D_i\cong \F_2$ and $(A_i,\sigma_i,u_i,\Lambda_i)$ is split-orthogonal,
            then $[\beta_{i}|_{V_{i}\cap V_{i}^\perp}]\neq [0]$.
            \item[(2c)] If $D_i\cong \F_2\times\F_2$, then there is no $z\in V_{(i)}=V_ie_i$ satisfying
            $h_{\beta_{i}}(z,z)=\veps_i$ and $[\beta_{i}|_{z^\perp\cap V_{i}}]=[0]$.
        \end{enumerate}
        Then there is
        $\vphi\in O(P,[\beta])$ such that
        $\vphi$ is a product
        of quasi-reflections taken with respect to elements of $V$,
        and $\vphi|_Q=\psi$. (The former implies $\vphi x-x\in V$ for all $x\in P$.)
    \end{thm}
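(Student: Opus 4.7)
The plan is to induct on the length of $Q$ as an $A$-module, the case $Q=0$ being trivial with $\vphi = \id$. Using condition (1c) together with Lemma~\ref{LM:symmetric-modules}, decompose $Q \cong \bigoplus_i (e_iA)^{n_i}$, pick some $i$ with $n_i > 0$, and choose $x \in Qe_i$ generating an $e_iA$-summand of $Q$. Set $y := \psi(x) \in Se_i$; then $x-y \in Ve_i$ by (1b). The goal is to produce an isometry $s$, expressed as a product of one or two quasi-reflections taken with respect to elements of $V$, such that $s(x) = y$. Given such an $s$, the isometry $s^{-1}\psi$ fixes $xA$ pointwise and restricts to an isometry of a complementary summand $Q'$ of $xA$ in $Q$ onto a complementary summand $S''$ of $yA$ in $S$. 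One then verifies that hypotheses (1a)--(1c) and (2a)--(2c) continue to hold for the reduced data $(Q', S'', V, s^{-1}\psi)$: (1a)--(1c) are preserved because $s$ is built from elements of $V$ and because the self-duality of $Q$ passes to its $e_iA$-complement (via Lemma~\ref{LM:dual-decomp} and Lemma~\ref{LM:symmetric-modules}), while (2a)--(2c) depend only on $\quo{V}_i \subseteq \quo{P}_i$, which is unchanged.

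The construction of $s$ is driven by Lemma~\ref{LM:action-of-reflections}. The first attempt is the single quasi-reflection $s_{x-y,\, e_i,\, c}$ with $c := h_\beta(x-y, x)$, which by part (i) of the lemma succeeds provided $c$ is $(e_i^\sigma, e_i)$-invertible. By Lemma~\ref{LM:ef-inv-mod-Jac}, this invertibility can be checked modulo $\Jac(A)$, factor by factor in the simple-artinian decomposition $\quo{A} \cong \prod_j A_j$. If $c$ fails to be invertible in some factor $A_j$, one invokes Lemma~\ref{LM:action-of-reflections}(ii): produce $z \in Ve_i$ and an $(e_i^\sigma, e_i)$-invertible element $c' \in \veps_i^\sigma \hat\beta(z)\veps_i$ such that $d := h_\beta(y, w)$, for $w := y - x + z c'^\circ h_\beta(z, x)$, is also $(e_i^\sigma, e_i)$-invertible; then $s := s_{w,\, e_i,\, d} \circ s_{z,\, e_i,\, c'}$ maps $x$ to $y$ and is itself built from elements of $V$ (since $z \in V$ and, once $s_{z,e_i,c'}(x) \in x + V$, also $w \in V$).

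The main obstacle is proving the existence of such a witness $z$ in each simple factor $A_j$ where the one-step construction fails. Again by Lemma~\ref{LM:ef-inv-mod-Jac}, we work modulo $\Jac(A)$, and in each simple factor the problem becomes a finite-dimensional linear-algebra question over the division ring (or division-ring-times-opposite) $D_j$. Condition (1a) supplies the freedom needed to realize prescribed values of $h_\beta(z, \cdot)$ and $\hat{\beta}(z)$ through elements of $V$, while (2a)--(2c) enumerate precisely the exceptional situations where no such $z$ can be found: (2a) rules out the split-orthogonal case where $V_i$ is totally isotropic; (2b) handles the delicate $D_i \cong \F_2$ split-orthogonal case, where anisotropic vectors in $V_i$ may be scarce; and (2c) excludes the $D_i \cong \F_2 \times \F_2$ pathology flagged in Lemma~\ref{LM:F-two-squared}, which in that lemma's notation directly produces the desired witness. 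The computations are a generalization of those in \cite[\S3]{Reiter75}, with ``inverses'' systematically replaced by $(e^\sigma, e)$-inverses; this is exactly what allows us to use arbitrary (not necessarily $\sigma$-invariant) idempotents $e_i$, thereby removing the corresponding restriction present in Reiter's version.
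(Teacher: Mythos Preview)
Your induction scheme has a genuine gap in how the pieces are reassembled. You first find $s$ with $s(x)=y$ using quasi-reflections from $V$, then apply the inductive hypothesis to $s^{-1}\psi|_{Q'}$ with the \emph{same} $V$. This yields $\vphi'\in O(P,[\beta])$ with $\vphi'|_{Q'}=s^{-1}\psi|_{Q'}$, and you want to output $s\vphi'$. On $Q'$ this gives $\psi|_{Q'}$, but on $xA$ you need $s\vphi'(x)=y=s(x)$, i.e.\ $\vphi'(x)=x$. Nothing guarantees this: $\vphi'$ is a product of quasi-reflections taken with respect to arbitrary elements of $V$, and such reflections move $xA$ in general. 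The conclusion $\vphi' x - x\in V$ from the theorem is far too weak.

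The paper avoids this by reversing the order and, crucially, shrinking $V$ for the rank-one step. One first splits $V=W\oplus R$ with $L_Q|_W$ an isomorphism and $L_Q(R)=0$, then carefully chooses a summand $V'\cong e_iA$ of $W$ so that conditions (2a)--(2c) survive for $V'\oplus R$ (this choice is the content of Step~1 and is where the hypotheses (2a)--(2c) are actually used). Via Lemma~\ref{LM:dual-decomp}, this induces $Q=Q'\oplus Q''$ with $Q'\cong e_iA$ and $L_{Q''}(V'\oplus R)=0$. Now one applies induction to $\psi|_{Q''}$ with the full $V$, obtaining $\vphi$, and only \emph{afterwards} handles the rank-one piece $\vphi^{-1}\psi|_{Q'}$ using Lemma~\ref{LM:main-lemma} with $V'\oplus R$ in place of $V$. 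Because $L_{Q''}(V'\oplus R)=0$, every quasi-reflection taken from $V'\oplus R$ fixes $Q''$ pointwise, so the resulting $\eta$ satisfies $\eta|_{Q''}=\id$ and $\vphi\eta$ extends $\psi$ on all of $Q$. Verifying (1a) for $\vphi^{-1}\psi(Q')$ relative to $V'\oplus R$ is itself nontrivial (Step~3). Your proposal essentially collapses Steps~1 and~3 into ``unchanged'', which is where the argument breaks.

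A smaller point: in the $D_i\cong\F_2\times\F_2$ case the rank-one step may require up to four $e_i$-reflections (two to achieve $\quo{x}=\quo{y}$, then two more via Lemma~\ref{LM:action-of-reflections}(ii)), not two as your sketch suggests.
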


    We  first prove the following special case.

    \begin{lem}\label{LM:main-lemma}
        Theorem~\ref{TH:Witt-I} holds when $Q\cong e_iA$ for some $1\leq i\leq \ell$.
        In fact, in this case, $\vphi$ can be taken to be a product of $e_i$-reflections.
    \end{lem}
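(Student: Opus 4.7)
The plan is to mimic the structure of Reiter's proof \cite[\S3]{Reiter75}, but with the $e$-reflections generalized to accommodate $e \neq e^\sigma$ via $(e^\sigma, e)$-invertibility. The problem is reduced to constructing one or two specific $e_i$-reflections using Lemma~\ref{LM:action-of-reflections}, with invertibility verified modulo the Jacobson radical via Lemma~\ref{LM:ef-inv-mod-Jac}.

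\emph{Setup and first attempt.} Fix $Q \cong e_i A$ and let $x \in Q$ be the image of $e_i$, so that $Q = xA$ and $x = xe_i \in Pe_i$; put $y := \psi(x)$. Because $\psi$ is an $A$-linear isometry and $S \cong Q \cong e_i A$, one has $S = yA$ with $y \in Pe_i$, and by hypothesis~(1b), $x - y \in V$. The isometry $\psi|_Q$ is determined by $y$, so it suffices to produce a product of $e_i$-reflections, all taken with respect to elements of $V$, that sends $x$ to $y$. Set $c := h_\beta(x - y,\, x)$; using $\hat{\beta}(y) = \hat{\beta}(x)$ (from $\psi$ being an isometry) together with $x,y \in Pe_i$, one checks $c \in e_i^\sigma \hat{\beta}(x-y) e_i$. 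If $c$ is $(e_i^\sigma, e_i)$-invertible, then Lemma~\ref{LM:action-of-reflections}(i) furnishes the single $e_i$-reflection $s_{x-y, e_i, c}$, taken with respect to $x - y \in V$, which maps $x$ to $y$, and we are done.

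\emph{Second attempt.} Assume $c$ is not $(e_i^\sigma, e_i)$-invertible. By Lemma~\ref{LM:action-of-reflections}(ii), it suffices to find $z \in Ve_i$ and an $(e_i^\sigma, e_i)$-invertible $c_0 \in e_i^\sigma \hat{\beta}(z) e_i$ such that, setting $w := y - x + z\, c_0^\circ\, h_\beta(z, x)$, the element $d := h_\beta(y, w)$ is $(e_i^\sigma, e_i)$-invertible as well. By Lemma~\ref{LM:ef-inv-mod-Jac}, both invertibility requirements may be tested in $\bar A = A/\Jac(A)$, and since $\bar e_i = \veps_i$ lives only in the factor $A_i$, all other components $A_j$ ($j \neq i$) contribute trivial conditions. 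Applying the $\veps_i$-transfer (Proposition~\ref{PR:transfer-special-case}) further reduces the search to producing $z_{(i)} \in V_{(i)}$ inside the quadratic space $(P_{(i)}, [\beta_{(i)}])$ over $A_{(i)} \cong D_i$, subject to $\hat{\beta}_{(i)}(z_{(i)}) \neq 0$ together with a second nonvanishing pairing condition involving $y_{(i)}$.

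\emph{The hard part} is the case analysis over small $D_i$. When $D_i$ is sufficiently large, a routine argument over a division ring produces $z_{(i)}$ satisfying both nonvanishing requirements simultaneously. The obstructions arise only when $D_i$ is closely related to $\F_2$, and this is precisely where hypotheses~(2a)--(2c) intervene. Condition~(2a) supplies some $z \in V_i$ with $\hat{\beta}_i(z) \neq 0$ in the split-orthogonal case, where $\hat{\beta}_i|_{V_i}$ might otherwise vanish identically and no invertible $c_0$ would exist. Condition~(2b) strengthens this to $z \in V_i \cap V_i^\perp$ when $D_i \cong \F_2$, allowing the two nonvanishing conditions on $z$ to be decoupled over such a small field. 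Finally, when $D_i \cong \F_2 \times \F_2$, Lemma~\ref{LM:F-two-squared} shows that (2c) is exactly the condition that rules out the pathological configurations in which no single $z_{(i)}$ can meet both requirements. Throughout, the arguments mirror the combinatorics of \cite[Lm.~3.8]{Reiter75}, with the essential new ingredient being that ordinary inverses in $e_i A e_i$ are consistently replaced by $(e_i^\sigma, e_i)$-inverses; this is what permits the choice of $e_i$ that need not be $\sigma$-invariant.
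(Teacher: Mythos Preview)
Your overall strategy---reduce to the residue, pass to the $\veps_i$-transfer, and invoke Reiter's case analysis over the division algebra $A_{(i)}$---is the same as the paper's, and your handling of the ``first attempt'' and the reduction modulo $\Jac(A)$ is correct.

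The gap is in the case $D_i\cong\F_2\times\F_2$. You assert that the problem always reduces to producing \emph{one or two} $e_i$-reflections via Lemma~\ref{LM:action-of-reflections}, and that condition~(2c) together with Lemma~\ref{LM:F-two-squared} ``rules out the pathological configurations in which no single $z_{(i)}$ can meet both requirements.'' This is not how the argument goes. In Reiter's cases~2 and~4 of \cite[Th.~4.1]{Reiter75} (not Lemma~3.8), the outcome over $A_{(i)}\cong\F_2\times\F_2$ is a dichotomy: either one finds the required $\bar z,\bar c$ directly, \emph{or} one obtains two explicit $\veps_i$-reflections $s^{[\beta_{(i)}]}_{f',\veps_i,\veps_i}$ and $s^{[\beta_{(i)}]}_{g',\veps_i,\veps_i}$ of the \emph{residual} space that send $\bar x$ to $\bar y$. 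In the second branch these lift to $e_i$-reflections $s_{f,e_i,c}$, $s_{g,e_i,d}$ of $(P,[\beta])$, but their composite only achieves $\pi_i(s_{f,e_i,c}s_{g,e_i,d}x)=\pi_i y$, i.e.\ equality modulo $\Jac(A)$ in the $i$-th block. One must then \emph{replace $x$ by $s_{f,e_i,c}s_{g,e_i,d}x$} and, under the new hypothesis $\bar x=\bar y$, invoke condition~(2c) and Lemma~\ref{LM:F-two-squared} (together with \cite[Lm.~3.5]{Reiter75}) to find $\bar z\in V_{(i)}$ with $h_{\beta_{(i)}}(\bar x,\bar z)=1$ and $\hat\beta_{(i)}(\bar z)=\Lambda_{(i)}$, yielding the final pair of reflections. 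So in this case $\varphi$ may be a product of up to \emph{four} $e_i$-reflections, and the role of~(2c) is not to guarantee a single $z_{(i)}$ for the original $x,y$, but to guarantee one \emph{after} the preliminary reduction to $\bar x=\bar y$.
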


    \begin{proof}
        The proof is based on reduction to the proof of \cite[Th.~4.1]{Reiter75},
        with certain modifications (particularly in the case $D_i\cong \F_2\times\F_2$).
        Throughout, $h=h_\beta$.

        Write $e=e_i$. Since $Q\cong eA$, there is $x\in Q$ with $xe=x$ and $Q=xA$.
        Let $y=\psi x$. It is enough to find a product of $e$-reflections of $(P,[\beta])$, taken
        with respect to elements of $V$, that sends $x$ to $y$.
        We shall prove this by applying Lemma~\ref{LM:action-of-reflections},
        except maybe in the case $D_i\cong \F_2\times \F_2$.

        If $h(x-y,x)$ is $(e^\sigma,e)$-invertible, then $s_{x-y,e,h(x-y,x)}(x)=y$ (and $x-y=x-\psi x\in V$
        by (1b)). If not, then it is enough to find $z\in Ve_i$ and $(e^\sigma, e)$-invertible $c\in \hat{\beta}(z)$ such
        that
        \[\Phi(z,c):=h(y,y-x+zc^\circ h(z,x))=h(y,y-x)+h(y,z)c^\circ h(z,x)\]
        is  $(e^\sigma,e)$-invertible, in which case we shall have $s_{w,e,\Phi(z,c)}s_{z,e,c}(x)=y$
        for $w=(x-y)+zc^\circ h(z,x)\in Ve_i$. Note that
        \[
        h(x-y,x)=h(x,x)-h(y,x)=h(y,y)-h(y,x)=h(y,y-x)\ ,
        \]
        so we may henceforth assume that $h(y,y-x)$ is not $(e^\sigma,e)$-invertible.

        Reducing everything modulo $\Jac(A)$ (using Lemma~\ref{LM:ef-inv-mod-Jac}), we see that it
        is enough to find $\quo{z}\in V_{(i)}=\quo{Ve_i}$ and $\quo{c}\in \hat{\beta}_{(i)}(\quo{z})\cap \units{A_{(i)}}$
        such that $\quo{\Phi}(\quo{z},\quo{c})$ is invertible in $A_{(i)}$, where
        \[
        \quo{\Phi}(\quo{z},\quo{c})=h_{\beta_{(i)}}(\quo{y},\quo{y}-\quo{x})+h_{\beta_{(i)}}(\quo{y},\quo{z})\quo{c}^\circ h_{\beta_{(i)}}(\quo{z},\quo{x})\ .
        \]
        We may assume that $h_{\beta_{(i)}}(\quo{y},\quo{y}-\quo{x})$ is not invertible in $A_{(i)}$.

        Now, conditions (1a) and (2a)--(2c) imply (respectively) that:
        \begin{enumerate}
            \item[{(1a-$i$)}] $L_{Q_{(i)}}(V_{(i)})=Q_{(i)}^*$ and $L_{S_{(i)}}(V_{(i)})=S_{(i)}^*$.
            \item[{(2a-$i$)}] If $(A_{(i)},\sigma_{(i)},u_{(i)},\Lambda_{(i)})$ is split-orthogonal, then $[\beta_{(i)}|_V]\neq 0$.
            \item[{(2b-$i$)}] If $A_{(i)}\cong \F_2$ and $(A_{(i)},\sigma_{(i)},u_{(i)},\Lambda_{(i)})$ is split-orthogonal,
            then $[\beta_{(i)}|_{V_{(i)}\cap V_{(i)}^\perp}]\neq [0]$.
            \item[{(2c-$i$)}] If $A_{(i)}\cong \F_2\times\F_2$, then there is no $\quo{z}\in V_{(i)}$ with
            $h_{\beta_{(i)}}(\quo{z},\quo{z})=1$ and $[\beta_{(i)}|_{\quo{z}^\perp\cap V_{(i)}}]=[0]$ .
        \end{enumerate}
        (Use Lemma~\ref{LM:onto-is-preserved} and Proposition~\ref{PR:orth-not-affected-by-conj}.)
        We are now reduced to the setting
        of steps 2--5 in the proof of \cite[Th.~4.1]{Reiter75} (applied to  $(P_{(i)},[\beta_{(i)}])$
        and $(A_{(i)},\sigma_{(i)},u_{(i)},\Lambda_{(i)})$), in which the existence of
        $\quo{z}$ and $\quo{c}$ as above is shown, except maybe in the case $A_{(i)}\cong\F_2\times\F_2$.

        Assume henceforth that $A_{(i)}\cong\F_2\times\F_2$ (which means $\sigma_{(i)}$ is the exchange involution).
        Let $\veps:=\veps_i=1_{A_{(i)}}$.
        It is shown in cases~2 and~4 of
        the proof of
        \cite[Th.~4.1]{Reiter75}, that either there are $\quo{z}$ and $\quo{c}$ as above,
        or there are $f',g'\in V_{(i)}$ with $\veps\in\hat{\beta}_{(i)}(f')$ and $\veps\in\hat{\beta}_{(i)}(g')$ such
        that $s^{[\beta_{(i)}]}_{f',\veps,\veps}s^{[\beta_{(i)}]}_{g',\veps,\veps}(\quo{x})=\quo{y}$. In the former case we are done.
        In the latter case, let $f,g\in Ve_i$ be such that $\quo{f}=f'$ and $\quo{g}=g'$, and choose
        $c\in e^\sigma \hat{\beta}(f)e$ and $d\in e^\sigma\hat{\beta}(g)e$ with $\quo{c}=\quo{d}=\veps$.
        Then $c$ and $d$ are $(e^\sigma,e)$-invertible (Lemma~\ref{LM:ef-inv-mod-Jac}), and
        $\pi_i s_{f,e,c}^{[\beta]}s_{g,e,d}^{[\beta]}(x)=\pi_i y$. Replacing
        $x$ with $s_{f,e,c}^{[\beta]}s_{g,e,d}^{[\beta]}(x)$, we may assume $\quo{x}=\quo{y}$.
        Under this new assumption, we establish the existence of $\quo{z}$ and $\quo{c}$ as above.
        Indeed, this amounts to finding $\quo{z}\in V_{(i)}$
        with $h_{\beta_{(i)}}(\quo{x},\quo{z})=1$ and $\hat{\beta}_{(i)}(\quo{z})=\Lambda_{(i)}=\{0,1\}$.
        By \cite[Lm.~3.5]{Reiter75}, there is $\quo{z'}\in V_{(i)}$ with
        $h(\quo{x},\quo{z'})=1$. Condition (2c-$i$) and Lemma~\ref{LM:F-two-squared} now give there required $\quo{z}\in V_{(i)}$.
    \end{proof}

    \begin{proof}[Proof of Theorem~\ref{TH:Witt-I}]
        The proof is based on the inductive step of \cite[Th.~4.1]{Reiter75}
        with certain modifications.

\smallskip

        \Step{1}
        If $Q=0$ there is nothing to prove.
        Otherwise, by Lemma~\ref{LM:symmetric-modules}, $e_iA$ is isomorphic to a summand of $Q$ for some $i$.
        Since $\lAd{Q}:V\to Q^*$ is onto and $Q^*$ is a projective right $A$-module,
        we can write $V=W\oplus R$ such that $\lAd{Q}|_W$ is an isomorphism and $\lAd{Q}(R)=0$.
        Evidently, $(e_iA)^*\cong e_i^\sigma A\cong e_iA$ is also isomorphic to a summand $V'$ of $W$.
        If $(A_i,\sigma_i,u_i,\Lambda_i)$  is not split-orthogonal or $D_i\ncong \F_2\times\F_2$, we choose $V'$ arbitrarily.
        Otherwise, we claim that $V'$ can be chosen such that condition (2a), (2b) and (2c) are satisfied when $V$ is replaced
        with $V'\oplus R$.

        Indeed, assume $(A_i,\sigma_i,u_i,\Lambda_i)$  is  split-orthogonal and $D_i\ncong\F_2$.
        By condition (2a),  we have $[\beta_{(i)}]\neq[0]$.
        Choose some $x\in V_{(i)}$ with
        $\hat{\beta}_{(i)}(x)\neq\Lambda_{(i)}=0$, and write $x=w+r$ with $w\in W_{(i)}$, $r\in R_{(i)}$.
        If $w=0$, then $[\beta_{(i)}|_{R_{(i)}}]\neq [0]$, as required.
        Otherwise, $w\quo{A}$ is a summand of $\quo{W}$ (because $\quo{A}$ is semisimple) having $e_iA$ as projective
        cover. Thus, $W$ has a summand $V'\cong e_iA$ whose image in $\quo{W}$ is $wA$. That $V'$ satisfies
        $[\beta_{(i)}|_{V'_{(i)}\oplus R_{(i)}}]\neq [0]$, as required.

        The same strategy also works   when $D_i\cong \F_2$ by Lemma~\ref{LM:onto-is-preserved}(iii);
        start with $x\in V_{(i)}^\perp\cap V_{(i)}=(V_i^\perp\cap V_i)\veps_i$ such that $\hat{\beta}_{(i)}(x)\neq 0$.

        When $D_i\cong\F_2\times\F_2$, by Lemma~\ref{LM:F-two-squared} and conditions (2c) and (1a), there is $z\in
        V_{(i)}$ and $x\in Q_{(i)}$ such that $h_{\beta_{(i)}}(x,z)=\veps_i$ and $\hat{\beta}_{(i)}(z)=\Lambda_{(i)}=\{0,\veps_i\}$.
        Write $z=w+r$ with $w\in W_{(i)}$, $r\in R_{(i)}$, and choose $V'$  as  above.
        (We must have $w\quo{A}\cong \veps_i\quo{A}$. Otherwise, there is an idempotent $\delta\in A_{(i)}\cong \F_2\times\F_2$
        such that $\delta\neq\veps_i$ and $w\delta=w$, which implies $\veps_i=h_{\beta_{(i)}}(x,z)=h_{\beta_{(i)}}(w+r,z)=h_{\beta_{(i)}}(w,z)=
        \delta^\sigma h_{\beta_{(i)}}(w,z)\in\delta^\sigma A_{(i)}$, a contradiction.)
        Now, we have $z\in V'_{(i)}\oplus R_{(i)}$ such that $\hat{\beta}_{(i)}(z)=\{0,\veps_i\}$ and $h_{\beta_{(i)}}(x,z)=\veps_i$,
        so condition (2c) holds for $V'\oplus R$ by Lemma~\ref{LM:F-two-squared}.
        This settles our claim about the choice of $V'$.

\smallskip

        \Step{2}
        Write $W=V'\oplus V''$, and let $U'=\lAd{Q}(V')$ and $U''=\lAd{Q}(V'')$.
        Then $Q^*=U'\oplus U'$, so by Lemma~\ref{LM:dual-decomp},
        we have a decomposition $Q=Q'\oplus Q''$ such that $\lAd{Q}(V')=\{f\in Q^*\suchthat f(Q'')=0\}=Q'^*$ and
        $\lAd{Q}(V'')=\{f\in Q^*\suchthat f(Q')=0\}=Q''^*$.
        Now, $Q'^*\cong V'\cong e_iA$, hence $Q'\cong Q'^{**}\cong (e_iA)^*\cong e_iA$,
        so $Q'\cong Q'^*$. Thus,
        $Q'\oplus Q''^*\cong (Q'\oplus Q'')^*=Q^*\cong Q\cong Q'\oplus Q''$, which
        implies $Q''^*\cong Q''$ (see the discussion at the end of~\ref{subsection:semiperfect}).
        Thus, we may apply induction to $(Q'',[\beta|_{Q''}])$ and $\psi|_{Q''}$.
        This yields  $\vphi\in O(P,[\beta])$,
        a product of reflections taken with respect to elements of $V$,
        such that $\vphi|_{Q''}=\psi|_{Q''}$.

\smallskip

        \Step{3}
        We now claim that conditions (1a)--(1c) and (2a)--(2c)
        hold for the isometry $\vphi^{-1}\psi|_Q:Q'\to \vphi^{-1}\psi(Q')$ and the module
        $V'\oplus R$ (in place of $V$).
        Indeed, (2a)--(2c) hold by our choice of $V'$ (cf.\ Step~1), and (1c) was verified in Step~2.

        Next, notice that by construction, an element $z\in V$ lives in $V'\oplus R$
        if and only if $\lAd{Q}(z)\in Q'^*$, i.e.\ $0=\lAd{Q}(z)(Q'')=h(z,Q'')$.
        Now, for all $y\in Q''$, we have
        \begin{eqnarray*}
        h(\vphi^{-1}\psi x-x,y)&=&h(\vphi^{-1}\psi x,y)-h(x,y)=h(\psi x,\vphi y)-h(x,y)\\
        &=&h(\psi x,\psi y)-h(x,y)=h(x,y)-h(x,y)=0\ .
        \end{eqnarray*}
        This implies that $\vphi^{-1}\psi x-x\in V'\oplus R$ for all $x\in Q'$, so (1b) holds.

        Finally, we have $\lAd{Q'}(V'\oplus R)=Q'^*$
        by construction,  so to show (1a) amounts to showing $\lAd{\vphi^{-1}\psi Q'}(V'\oplus R)=(\vphi^{-1}\psi Q')^*$.
        Let $f\in (\vphi^{-1}\psi Q')^*$. We extend $f$
        to $\vphi^{-1}S=\vphi^{-1}\psi Q=\vphi^{-1}\psi Q'\oplus \vphi^{-1}\psi Q''=\vphi^{-1}\psi Q'\oplus Q''$
        by setting it to be $0$ on $Q''$.
        View $f\vphi^{-1}$ as an element of $S^*$. By
        (1a) (for $Q,S,V$), there is $x\in V$
        with $\lAd{S}(x)=f\vphi^{-1}$.
        Now, for all $y\in \vphi^{-1}\psi Q$, we have
        $h(\vphi^{-1}x, y)=h(x,\vphi y)=\lAd{S}(x)(\vphi y)=f\vphi^{-1}\vphi y=fy$,
        so $\lAd{\vphi^{-1}\psi Q'}(\vphi^{-1}x)=f$. Since $\vphi$ is  a product of reflections
        taken with respect to element of $V$, $\vphi(V)=V$, hence $\vphi^{-1}x\in V$. In addition,
        when $y\in Q''$, we have $h(\vphi^{-1}x,y)=fy=0$, so $\vphi^{-1}x\in V'\oplus R$.
        This shows that $f\in \lAd{\vphi^{-1}\psi Q'}(V'\oplus R)$.

\smallskip

        \Step{4} To finish,
        we apply Lemma~\ref{LM:main-lemma} to $\vphi^{-1}\psi|_Q:Q'\to \vphi^{-1}\psi(Q')$ with
        $V'\oplus R$ in place of $V$ to get a product $\eta$ of $e_i$-reflections, taken with respect to elements
        of $V'\oplus R$, such that $\eta|_{Q'}=\vphi^{-1}\psi$.
        Since $\lAd{Q''}(V'\oplus R)=0$, reflections taken with respect to elements
        of $V'\oplus R$ fix $Q''$. Thus, $\eta|_{Q''}=\id_{Q''}$, so $\eta|_Q=\vphi^{-1}\psi$.
        This means $\psi=\vphi\eta|_Q$ and we are done.
    \end{proof}

    \begin{remark}\label{RM:looser-conds}
        Theorem~\ref{TH:Witt-I} actually holds when (2c) is replaced with the milder conditions:
        \begin{enumerate}
            \item[(2c$'$)] If $D_i\cong\F_2\times\F_2$ and $n_i=1$, then  there is no $z\in V_{(i)}=V_i$ satisfying
            $h_{\beta_{i}}(z,z)=\veps_i=1_{A_i}$ and $[\beta_{i}|_{z^\perp\cap V_{i}}]=[0]$.
            \item[(2c$''$)] If $D_i\cong\F_2\times\F_2$ and $n_i>1$, then $Q_i\ncong \veps_iA_i$.
        \end{enumerate}
        The idea is to extend Lemma~\ref{LM:main-lemma} to the case
        $D_i\cong\F_2\times\F_2$, $n_i>1$, and $Q\cong e'_iA$, where
        $e'_i$ is a lifting of the idempotent $\veps'_i=e_{11}+e_{22}\in A_i$.
        The proof requires
        using both $e_i$-reflections (to get $\quo{x}=\quo{y}$) and $e'_i$-reflections (to find $\quo{z}$ and $\quo{c}$; use
        \cite[Lm.~3.6]{Reiter75}).
        The inductive step of Theorem~\ref{TH:Witt-I} should also be modified. In case
        $D_i\cong\F_2\times\F_2$ and $n_i>1$, one chooses $V'$ isomorphic to $e'_iA$ rather than $e_iA$.
        If this results in  $Q''_i\cong \veps_iA_i$,
        then one should split $Q'\cong e'_iA$ into two copies of $e_iA$ and apply the induction hypothesis to the direct sum
        of $Q''$ and one of these copies.
        The full and detailed argument would overload the document while not benefiting any result
        beside Theorem~\ref{TH:Witt-I} and Corollary~\ref{CR:main-I} below, so we chose to omit it.
    \end{remark}

    If we do not require in Theorem~\ref{TH:Witt-I} that the extension of $\psi$  will
    be  a product of quasi-reflections, then conditions (1c), (2a), (2b), (2c) can be dropped.

    \begin{thm}\label{TH:Witt-II}
        Let $(P,[\beta])$ be a quadratic space, let $Q,S,V$ be summands of
        $P$, and let $\psi:(Q,[\beta|_Q])\to (S,[\beta|_S])$ be an isometry.
        Assume that
        \begin{enumerate}
            \item[(1a)] $\lAd{Q}(V)=Q^*$ and $\lAd{S}(V)=S^*$, and
            \item[(1b)] $\psi x-x\in V$ for all $x\in V$.
        \end{enumerate}
        Then $\psi$ extends to an isometry
        $\vphi\in O(P,[\beta])$ such that $\vphi x-x\in V$ for all $x\in P$.
    \end{thm}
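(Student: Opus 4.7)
The plan is to deduce Theorem~\ref{TH:Witt-II} from Theorem~\ref{TH:Witt-I} by enlarging the triple $(Q,S,\psi)$ so that the stronger hypotheses (1c) and (2a)--(2c) of Theorem~\ref{TH:Witt-I} become satisfied, while preserving (1a) and (1b) and leaving $V$ unchanged---the last point is essential, because the conclusion $\varphi x-x\in V$ must survive the reduction.

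First I would construct a common complement: a summand $T$ of $P$ on which the extended isometry acts as the identity, chosen so that $\tilde Q:=Q\oplus T$, $\tilde S:=S\oplus T$, and $\tilde\psi:=\psi\oplus\id_T$ together verify (1a)--(1c) and (2a)--(2c).  Condition~(1c) is arranged via Lemma~\ref{LM:symmetric-modules}, by tuning the indecomposable summand types of $T$ (among $e_1A,\ldots,e_\ell A$) so that $\tilde Q\cong\tilde Q^{\,*}$.  Conditions (2a)--(2c) are handled by a factor-by-factor analysis of $\bar A=A/\Jac(A)$: in each split-orthogonal factor, and in each factor where $D_i\cong\F_2\times\F_2$, one includes in $T$ elements whose residues modulo $\Jac(A)$ exhibit the right quadratic behavior to satisfy the corresponding clause.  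The orthogonality one needs to keep (1a) and (1b) valid for the enlarged data is available because the unimodularity of $(P,[\beta])$, combined with (1a), forces $V$ to pair richly with $Q$ in each simple quotient of $\bar A$.

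Second, I would verify (1a) and (1b) for $\tilde\psi$: (1b) is automatic from $\tilde\psi|_T=\id_T$ (so $\tilde\psi t-t=0\in V$) and the original (1b) on $Q$, while (1a) follows from (1a) for $(Q,S)$ together with the orthogonality and duality properties built into~$T$.  Applying Theorem~\ref{TH:Witt-I} to $\tilde\psi$ then yields an isometry $\varphi\in O(P,[\beta])$ extending $\tilde\psi$---and in particular $\psi$---with $\varphi x-x\in V$ for all $x\in P$, as required.

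The main obstacle is the construction of~$T$ in the split-orthogonal and $\F_2\times\F_2$ cases, where conditions (2a)--(2c) carry genuine content.  This requires a careful quotient-by-quotient argument using the reductions set up in~\ref{subsection:general-setting}: the decomposition of $\bar A$ into simple artinian unitary rings via Proposition~\ref{PR:factorization-of-unitary-rings}, the conjugation to standard form (Proposition~\ref{PR:conjugation-to-standard-form}), the $\veps_i$-transfer to $(A_{(i)},\sigma_{(i)},u_{(i)},\Lambda_{(i)})$ (Proposition~\ref{PR:transfer-special-case}), and the preservation of (split-)orthogonality under both (Proposition~\ref{PR:orth-not-affected-by-conj}).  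Once an admissible $T$ is produced, the remainder of the argument is a direct invocation of Theorem~\ref{TH:Witt-I}.
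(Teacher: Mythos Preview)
Your plan has a structural gap. Conditions (2a)--(2c) of Theorem~\ref{TH:Witt-I} are hypotheses on $V$, not on $Q$: for instance, (2a) requires $[\beta_i|_{V_i}]\neq 0$ whenever the factor is split-orthogonal and $Q_i\neq 0$. Enlarging $Q$ to $\tilde Q=Q\oplus T$ while \emph{keeping $V$ fixed} cannot turn a zero $[\beta_i|_{V_i}]$ into a nonzero one; and since $\tilde Q_i\supseteq Q_i$, it can only make the clause ``$Q_i=0$'' harder to satisfy. The same objection applies to (2b) and (2c), which are purely conditions on $\beta|_V$. So ``including in $T$ elements whose residues exhibit the right quadratic behavior'' does not help: those elements land in $\tilde Q$, not in $V$. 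There is a second problem with (1a): you now need $\lAd{\tilde Q}(V)=\tilde Q^{\,*}$, a strictly larger target than $Q^*$, with the same $V$; nothing in the hypotheses guarantees this. Finally, you invoke ``the unimodularity of $(P,[\beta])$'', which Theorem~\ref{TH:Witt-II} does not assume.

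The paper proceeds in the opposite direction: it enlarges the \emph{ambient space} to $P'=P\perp T$ for an auxiliary rank-two space $(T,[\gamma])$ with basis $z,w$, and correspondingly enlarges $V$ to $V'=V\oplus(z+w)A$ (and $Q$ to $Q\oplus zA$). The form $\gamma$ is chosen so that the new summand $(z+w)A$ of $V'$ forces (2a), (2b), (2c) to hold in every factor. After applying Theorem~\ref{TH:Witt-I} on $P'$, the essential remaining step is to show that the resulting $\varphi'\in O(P',[\beta'])$ preserves $P\subseteq P'$; then $\varphi:=\varphi'|_P$ does the job, and $\varphi x-x\in V'\cap P=V$. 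Condition (1c) is handled by a separate preliminary enlargement: one adjoins the hyperbolic space on a module $U$ chosen with $(Q\oplus U)^*\cong Q\oplus U$, again showing the resulting isometry preserves $P$.
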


    \begin{proof}
        The proof is essentially the same as  the proof of \cite[Th.~6.2]{Reiter75}.
        However, since there are some differences in the conditions to be checked, we recall the argument.

        Assume first that (1c) holds. Let $T$ be a free $A$-module
        with basis $\{z,w\}$. We choose $a\in A$ such that:
        \begin{itemize}
        \item
        $a_i:=\pi_i a=0$ if $D_i\ncong \F_2\times\F_2$, or $D\cong \F_2\times\F_2$ and $\lAd{[\beta_{(i)}],V_{(i)}}(V_{(i)})\ncong A_{(i)}$.
        \item
        $a_i:=\pi_i a\in A_{(i)}\setminus\{0,\veps_i\}$ if $D_i\cong \F_2\times\F_2$ and $\lAd{[\beta_{(i)}],V_{(i)}}(V_{(i)})\cong A_{(i)}$.
        \end{itemize}
        We define a sesquilinear form $\gamma:T\times T\to A$ by linearly extending
        \begin{align*}
            \gamma(z,z)=&0 & \gamma(z,w)=&1 \\
            \gamma(w,z)=&0 & \gamma(w,w)=&a
        \end{align*}
        Let $(P',[\beta'])=(P,[\beta])\perp (T,[\gamma])$, $Q'=Q\oplus zA$, $S'=S\oplus zA$, $V'=V\oplus (z+w)A$
        and define $\psi':Q'\to S'$ by $\psi'(x\oplus zc)=\psi x\oplus zc$ ($c\in A$). It
        is easy to check that $\psi'$ is an isometry from $[\beta'|_{Q'}]$ to $[\beta'|_{S'}]$.
        Furthermore, the conditions of Theorem~\ref{TH:Witt-I} hold
        for $(P',[\beta']),Q',S',V',\psi'$:
        \begin{enumerate}
        \item[(1a)] holds since $h_{\beta'}(0\oplus (z+w),0\oplus z)=u\in\units{A}$,
        \item[(1b)] is straightforward,
        \item[(1c)] holds since $Q'^*\cong Q^*\oplus A^*\cong Q\oplus A\cong Q'$,
        \item[(2a)] holds since $\hat{\gamma}_{(i)}(\pi_i(ze_i+we_i))=\veps_i+a_i+\Lambda_{(i)}$,
        \item[(2b)] holds since $\pi_i(0\oplus (ze_i+we_i))\in V'_{(i)}\cap V'^\perp_{(i)}$ when $D_i\cong \F_2$ (because
        then $h_{{\gamma}_i}(\pi_i(ze_i+we_i),\pi_i(ze_i+we_i))=2\veps_i=0$), and
        \item[(2c)] holds by Lemma~\ref{LM:F-two-squared}, because if $D_i\cong\F_2\times\F_2$,
        then
        $\lAd{[\beta'_{(i)}],V'_{(i)}}(V'_{(i)})\ncong A_{(i)}$.
        Indeed,  we have $\lAd{[\beta'_{(i)}],V'_{(i)}}(V'_{(i)})\cong\lAd{[\beta_{(i)}],V_{(i)}}(V_{(i)})\oplus (a_i+a_i^\sigma)A$
        because $h_{{\gamma}_{(i)}}(\pi_i(ze_i+we_i),\pi_i(ze_i+we_i))=a_i+a_i^\sigma$.
        \end{enumerate}
        Thus, $\psi'$ extends to $\vphi'\in O(P',[\beta'])$ with $\vphi' x-x\in V'$ for all $x\in P'$.
        We finish by showing that $\vphi'(P\oplus 0)=P\oplus 0$. Indeed, for $x\oplus 0\in P\oplus 0$,
        we can write $\vphi'(x\oplus 0)=y\oplus (z+w)c$ for some $y\in P$, $c\in A$.
        This implies $0=h_{\beta'}(0\oplus z,x\oplus 0)= h_{\beta'}(\vphi'(0\oplus z),\vphi'(x\oplus 0))
        =h_{\beta'}(0\oplus z,y\oplus (z+w)c)=c$,
        so $c=0$ and $\psi'(x\oplus 0)\in P\oplus 0$.

        When (1c) does not hold, choose $U\in\rproj{A}$ such that $(Q\oplus U)^*\cong Q\oplus U$.
        Define a sesquilinear form $\gamma$ on $U\oplus U^*$ by $\gamma(x\oplus f,y\oplus g)=fy$
        ($x,y\in U$, $f,g\in U^*$;
        $(U\oplus U^*,[\gamma])$ is the \emph{hyperbolic quadratic space} associated with $U$).
        Now, let $(P',[\beta'])=(P,[\beta])\oplus (U\oplus U^*,[\gamma])$, $Q'=Q\oplus U\oplus 0$, $S'=S\oplus U\oplus 0$, $V'=V\oplus0\oplus U^*$,
        and define $\psi':Q'\to S'$ by $\psi'(x\oplus z\oplus 0)=\psi x\oplus z\oplus 0$ ($x\in Q$, $z\in U$).
        Conditions (1a), (1b), (1c) are easily
        seen to hold for $(P',[\beta']),Q',S',V',\psi'$, so by what we have shown above, $\psi'$ extends to an isometry
        $\vphi'\in O(P',[\beta'])$ with $\vphi' x-x\in V'$. Similar arguing implies that $\vphi'$ maps
        $P$ into itself.
    \end{proof}

    The following corollaries
    can be viewed as analogues of Witt's Extension Theorem
    and Witt's Cancellation Theorem.

    \begin{cor}\label{CR:main-I}
        Let $(P,[\beta])$ be a quadratic space and let $Q,S$ be summands
        of $P$. Assume that  $(P,[\beta])$ or $(Q,[\beta|_Q])$ is unimodular.
        Then any isometry of $\psi:(Q,[\beta|_Q])\to(S,[\beta|_S])$ extends to
        an isometry $\vphi\in O(P,[\beta])$.
    \end{cor}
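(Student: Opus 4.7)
The plan is to invoke Theorem~\ref{TH:Witt-II} with the simplest possible choice of auxiliary submodule, namely $V=P$ itself. With this choice, condition (1b) of Theorem~\ref{TH:Witt-II} is trivial, and condition (1c) has been removed from the hypothesis, so the only thing to verify is condition (1a): that $\lAd{Q}(P)=Q^*$ and $\lAd{S}(P)=S^*$.

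I would split the verification of (1a) into the two cases.  Suppose first that $(P,[\beta])$ is unimodular.  Since $Q$ is a summand of $P$, any $A$-linear map $Q\to A$ extends to $P$, so the restriction map $P^*\to Q^*$ is surjective.  Composing this surjection with the isomorphism $\lAd{[\beta]}:P\to P^*$ gives exactly the map $\lAd{Q}:P\to Q^*$, which is therefore onto; the same argument works for $S$.

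Suppose instead that $(Q,[\beta|_Q])$ is unimodular.  Then $\lAd{[\beta|_Q]}:Q\to Q^*$ is an isomorphism.  Observing that $\lAd{Q}$ restricted to $Q\subseteq P$ coincides with $\lAd{[\beta|_Q]}$, one obtains $\lAd{Q}(P)\supseteq\lAd{Q}(Q)=Q^*$.  Moreover, since $\psi$ is an isometry, $(S,[\beta|_S])$ is also unimodular, and the same reasoning gives $\lAd{S}(P)=S^*$.

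In both cases the hypotheses of Theorem~\ref{TH:Witt-II} are satisfied with $V=P$, and the extension $\vphi\in O(P,[\beta])$ produced by that theorem is the desired isometry.  There is no real obstacle here: the proof is purely a matter of unpacking definitions and noting that the surjectivity required in Theorem~\ref{TH:Witt-II}(1a) is automatic from either unimodularity hypothesis.  The real content of the Corollary lies entirely in Theorem~\ref{TH:Witt-II}.
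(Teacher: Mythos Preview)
Your proof is correct and follows exactly the paper's approach: take $V=P$ in Theorem~\ref{TH:Witt-II}, note that (1b) is trivial, and verify (1a) from the unimodularity assumption. Your argument is simply a fuller unpacking of the paper's terse ``(1a) follows from the unimodularity assumption,'' treating the two cases in the hypothesis separately.
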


    \begin{proof}
        Take $V=P$ in Theorem~\ref{TH:Witt-II}. Condition (1b) is clear, and condition
        (1a) follows from the unimodularity assumption.
    \end{proof}

    \begin{cor}\label{CR:main-II}
        Let $(P,[\beta])$, $(Q_1,[\gamma_1])$, $(Q_2,[\gamma_2])$
        be quadratic spaces such that $(P,[\beta])$ is unimodular
        and $(P,[\beta])\perp(Q_1,[\gamma_1])\cong (P,[\beta])\perp (Q_2,[\gamma_2])$.
        Then $(Q_1,[\gamma_1])\cong (Q_2,[\gamma_2])$.
    \end{cor}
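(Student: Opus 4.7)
The plan is to realise both sides of the given isometry inside a single ambient quadratic space $(M,[\mu])$ containing two isometric unimodular copies of $(P,[\beta])$, apply Corollary~\ref{CR:main-I} to move one copy onto the other by an isometry of $M$, and read off the desired isometry on the orthogonal complements.

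More concretely, let $\psi:(P,[\beta])\perp(Q_1,[\gamma_1])\to (M,[\mu]):=(P,[\beta])\perp(Q_2,[\gamma_2])$ denote the given isometry, and set $P_1:=\psi(P\oplus 0)\subseteq M$ and $P_2:=P\oplus 0\subseteq M$. Both $P_1$ and $P_2$ are summands of $M$ (with obvious complements $\psi(0\oplus Q_1)$ and $0\oplus Q_2$), and both are isometric to the unimodular space $(P,[\beta])$ via the evident maps. First I would apply Corollary~\ref{CR:main-I} to $(M,[\mu])$, the summands $P_1,P_2$, and the natural isometry $P_1\to P_2$ between them, to obtain $\vphi\in O(M,[\mu])$ with $\vphi|_{P_1}$ equal to this natural isometry; in particular $\vphi(P_1)=P_2$. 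Then $\vphi\circ\psi$ is an isometry from $(P,[\beta])\perp(Q_1,[\gamma_1])$ onto $(M,[\mu])$ that sends the first summand of the source onto $P_2$.

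To finish, I would use the fact that any isometry of quadratic spaces preserves orthogonal complements, together with the observation that unimodularity of $[\beta]$ forces the orthogonal complement of $P\oplus 0$ inside $P\oplus Q_i$ (with respect to $[\beta]\perp[\gamma_i]$) to coincide with $0\oplus Q_i$: indeed, if $p\oplus q$ is orthogonal to $P\oplus 0$, then $h_\beta(p,P)=0$, and unimodularity of $h_\beta$ forces $p=0$. Restricting $\vphi\circ\psi$ to these orthogonal complements then yields the sought isometry $(Q_1,[\gamma_1])\cong(Q_2,[\gamma_2])$. The main (and essentially only) ingredient is Corollary~\ref{CR:main-I}; the cancellation statement is a formal consequence of the extension theorem, so I anticipate no substantial obstacle beyond the routine verification of the identification of the orthogonal complements.
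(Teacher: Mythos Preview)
Your proposal is correct and follows essentially the same approach as the paper: identify the two orthogonal sums via the given isometry, apply Corollary~\ref{CR:main-I} (using that the restriction to the copy of $P$ is unimodular) to extend the isometry between the two copies of $(P,[\beta])$ to an isometry of the whole space, and then read off the isometry $(Q_1,[\gamma_1])\cong(Q_2,[\gamma_2])$ on orthogonal complements. Your write-up simply spells out the identification of the orthogonal complement of $P\oplus 0$ with $0\oplus Q_i$ via unimodularity of $h_\beta$, which the paper leaves implicit.
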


    \begin{proof}
        Identify $(Z,[\zeta]):=(P,[\beta])\perp(Q_1,[\gamma_1])$ with $(P,[\beta])\perp (Q_2,[\gamma_2])$.
        Corollary~\ref{CR:main-I} implies that any isometry between the two copies of $(P,[\beta])$ in $(Z,[\zeta])$ extends
        to an isometry of $(Z,[\zeta])$. This isometry must map $(Q_1,[\gamma_1])$ isometrically onto $(Q_2,[\gamma_2])$.
    \end{proof}

    \begin{cor}\label{CR:main-III}
        Let $(P,[\beta])$ be a unimodular quadratic space. Assume that
        \begin{enumerate}
            \item[(1)] if $(A_i,\sigma_i,u_i,\Lambda_i)$ is  split-orthogonal, then $D_i\ncong\F_2$, and
            \item[(2)] if $D_i\cong\F_2\times\F_2$, then $P_i\ncong \veps_i A_i$.
        \end{enumerate}
        Then $O(P,[\beta])$ is generated by quasi-reflections.
    \end{cor}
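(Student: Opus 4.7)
The plan is to apply Theorem~\ref{TH:Witt-I} with $Q=S=V=P$ and $\psi$ an arbitrary element of $O(P,[\beta])$. Since the theorem then produces $\vphi\in O(P,[\beta])$ expressible as a product of quasi-reflections with $\vphi|_Q=\psi$, the choice $Q=P$ forces $\vphi=\psi$, so every isometry of $(P,[\beta])$ becomes a product of quasi-reflections. The entire task thus reduces to verifying the six hypotheses (1a)--(1c) and (2a)--(2c) of Theorem~\ref{TH:Witt-I} in this configuration.

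Conditions (1a) and (1c) are direct restatements of unimodularity (the map $\lAd{[\beta]}:P\to P^*$ is an isomorphism), and (1b) is vacuous because $V=P$. For (2a), I would use Lemma~\ref{LM:onto-is-preserved}(ii) to deduce that each $(P_i,[\beta_i])$ is unimodular, and then rule out $[\beta_i]=[0]$ when $P_i\neq 0$: if $\beta_i\in\Lambda_{P_i}$ then $h_{\beta_i}\equiv 0$, contradicting unimodularity. Thus $P_i\neq 0$ implies $[\beta_i|_{V_i}]=[\beta_i]\neq[0]$. Condition (2b) is vacuous by hypothesis~(1) of the corollary, which forbids the simultaneous occurrence of $D_i\cong\F_2$ and split-orthogonality.

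The essential step is (2c), which I would handle by contradiction. Assume $D_i\cong\F_2\times\F_2$ and some $z\in P_{(i)}$ satisfies $h_{\beta_i}(z,z)=\veps_i$ together with $[\beta_i|_{z^\perp\cap P_i}]=[0]$. Passing through $\veps_i$-transfer (Proposition~\ref{PR:transfer-special-case}) converts these statements into the analogous ones over $A_{(i)}\cong\F_2\times\F_2$ equipped with the exchange involution; unimodularity of $[\beta_{(i)}]$ then supplies $x\in P_{(i)}$ with $h_{\beta_{(i)}}(x,z)=1$, so Lemma~\ref{LM:F-two-squared} applies and yields $\lAd{[\beta_{(i)}],P_{(i)}}(P_{(i)})\cong A_{(i)}$. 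Since $[\beta_{(i)}]$ is unimodular, this forces $P_{(i)}^*\cong A_{(i)}$ and hence $P_{(i)}\cong A_{(i)}$ as right $A_{(i)}$-modules. The Morita equivalence underlying $\veps_i$-transfer then translates this back to $P_i\cong\veps_iA_i$, contradicting hypothesis~(2) of the corollary. This last step is the main obstacle: transforming the negative combinatorial condition ``no bad $z$ exists'' into the module-theoretic assumption $P_i\ncong\veps_iA_i$ passes essentially through Lemma~\ref{LM:F-two-squared} combined with Morita transfer, whereas all other conditions follow routinely from unimodularity and from the explicit hypotheses~(1)--(2).
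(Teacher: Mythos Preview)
Your proposal is correct and follows essentially the same route as the paper: apply Theorem~\ref{TH:Witt-I} with $Q=S=V=P$, verify (1a)--(1c) from unimodularity, dispose of (2b) via hypothesis~(1), and handle (2c) through Lemma~\ref{LM:F-two-squared} combined with the Morita equivalence of $\veps_i$-transfer. The paper's treatment of (2c) is terser --- it argues directly that $\lAd{V_{(i)}}(V_{(i)})=P_{(i)}^*\cong P_{(i)}\ncong A_{(i)}$ and invokes Lemma~\ref{LM:F-two-squared} contrapositively --- whereas you spell out the contradiction and the passage through $\veps_i$-transfer more explicitly; note also that in your argument you may simply take $x=z$ (since $h_{\beta_{(i)}}(z,z)=1$) rather than appealing to unimodularity to produce $x$.
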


    \begin{proof}
        It is enough to show that the conditions of Theorem~\ref{TH:Witt-I}
        hold when we take $V,Q,S$ to be $P$. Indeed, conditions (1a) and (1b) hold as in the proof of Corollary~\ref{CR:main-I}.
        Condition (1c) holds because $L_{[\beta]}:P\to P^*$ is an isomorphism.
        To see (2a), observe that
        by Lemma~\ref{LM:onto-is-preserved}(ii),
        $\lAd{P_{(i)}}:P_{(i)}\to P_{(i)}^*$ is an isomorphism, and hence either $P_{i}=0$ or $[\beta_{i}]\neq 0$.
        Condition (2b) follows from assumption (1). Finally, assumption (2) and the unimodularity of $[\beta]$ imply,
        $\lAd{V_{(i)}}(V_{(i)})=\lAd{P_{(i)}}(P_{(i)})=P_{(i)}^*\cong P_{(i)}\ncong A_{(i)}$, so condition (2c) holds
        by Lemma~\ref{LM:F-two-squared}.
    \end{proof}

    \begin{remark}\label{RM:comparison}
        Theorems~\ref{TH:Witt-I} and~\ref{TH:Witt-II} should be compared with Theorems 6.1 and~6.2 of \cite{Reiter75}
        (and also Theorems~4.1, and~5.1 in \cite{Reiter75}).

        Theorem~6.1 in \cite{Reiter75}
        is similar to Theorem~\ref{TH:Witt-I}, but it  applies to the broader class
        of semilocal rings, and it derives the stronger conclusion that  $\vphi$ is is a product
        of \emph{reflections} (rather than quasi-reflections). However, it assumes the stronger assumptions
        \begin{enumerate}
            \item[(1c$'$)] $Q$ and $S$ are free,
            \item[(2a$'$)] if $(A_i,\sigma_i,u_i,\Lambda_i)$ is split-orthogonal, then $[\beta_i|_{Q_i^\perp\cap V_i}]\neq [0]$,
        \end{enumerate}
        in place of (1c) and (2a), and condition (2c) is replaced
        with condition (2c$'$) of Remark~\ref{RM:looser-conds} (note that (1c$'$) and (2c$'$) imply (2c$''$)).
        In  section~\ref{section:gen-by-ref}, we shall give a precise description of which isometries
        are products of reflection (in case $A$ is semiperfect) from which it will follow that condition (2a$'$) cannot
        be completely removed in general.

        Theorem~6.1 in \cite{Reiter75} resembles Theorem~\ref{TH:Witt-II}. It applies to semilocal unitary rings, but it
        assumes condition (2a$'$) in addition to (1a) and (1b).
        This additional assumption does not allow one to derive Corollaries~\ref{CR:main-I} and~\ref{CR:main-II} from
        \cite[Th.~6.2]{Reiter75}, and in fact, these corollaries are false for semilocal rings, as demonstrated by Keller
        \cite[\S2]{Keller88}. Nevertheless, \cite[Th.~6.2]{Reiter75} still implies
        Corollary~\ref{CR:main-II} in case $[(\gamma_1)_i]\neq 0$ whenever $(A_i,\sigma_i,u_i,\Lambda_i)$
        is split-orthogonal.
        See also \cite[Th.~3.4]{Keller88} for different proof and strengthening of this statement.
    \end{remark}

    \begin{remark}
        Condition (2b) in Theorem~\ref{TH:Witt-I} (and also \cite[Th.~6.1]{Reiter75})
        cannot be completely removed, even when $(P,[\beta])$ is unimodular.
        Indeed, over $\F_2$, there is a unimodular quadratic form of dimension $4$
        whose isometry group is not generated by quasi-reflections (which are just reflections
        in this case). However, up to isomorphism, this is the only
        exception over $\F_2$ (see \cite[Cr.~11.42]{Taylor92GeometryOfClassicalGroups}, for instance).
        We do not know if condition (2c) can be removed in general.
    \end{remark}

\subsection{Applications}

    We now use the previous results to derive several consequences to hermitian categories and systems of sesquilinear forms.
    Hermitian categories are a purely categorical framework to work with quadratic
    forms. We refer the reader to \cite[\S1]{QuSchSch79}, \cite[Ch.~7]{SchQuadraticAndHermitianForms} or \cite[Ch.~II]{Kn91}
    for the relevant definitions.

\smallskip

    Let $(\catC,*,\omega)$ be a hermitian category with form parameter $(\veps,\Lambda)$.
    In \cite[\S3.4]{QuSchSch79},
    Quebbemann, Scharlau and Schulte prove that unimodular $(\veps,\Lambda)$-quadratic forms over
    $\catC$ cancel from orthogonal sums, provided the following assumptions hold:
    \begin{enumerate}
        \item[(A)] All idempotent morphisms in $\catC$ split.
        \item[(B)] Every object of $\catC$ is a direct sum of finitely many objects
        with local endomorphism ring (or, alternatively, the endomorphism ring of every object in $\catC$
        is semiperfect).
        \item[(C)] For every $M\in\catC$, $E:=\End_{\catC}(M)$ is complete in the $\Jac(E)$-adic topology.
    \end{enumerate}
    After proving this consequence, the authors comment that condition (C) is in fact unnecessary since
    one can apply   \emph{transfer} (see \cite[Pr.~2.4]{QuSchSch79}) to translate
    the problem to cancellation of quadratic forms over semiperfect unitary rings, and then apply
    Reiter's Theorem~6.2 in \cite{Reiter75}. This also implies that condition (A) is unnecessary.
    However, the authors seem unaware that Reiter's Theorem
    does not imply cancellation in general (see Remark~\ref{RM:comparison}), and hence this argument
    implies cancellation only in certain
    cases (e.g. when all three $(\veps,\Lambda)$-quadratic spaces
    involved are unimodular and their base objects are \emph{progenerators} of $\catC$).

    Nevertheless, by replacing \cite[Th.~6.2]{Reiter75} with Theorem~\ref{TH:Witt-II}
    (or Corollary~\ref{CR:main-II}), we see that condition (C) can indeed be  dropped.
    We have therefore obtained:

    \begin{cor}\label{CR:cancellation-herm-cats}
        Assume condition  (B) is satisfied. Then unimodular $(\veps,\Lambda)$-quadratic forms over
        $\catC$ cancel from orthogonal sums.
    \end{cor}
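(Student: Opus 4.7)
The plan is to reduce the cancellation statement over $\catC$ to Corollary~\ref{CR:main-II} by means of the hermitian transfer procedure of \cite[Pr.~2.4]{QuSchSch79}. Suppose we are given an isometry
\[
\phi : (M,[\beta]) \perp (N_1,[\gamma_1]) \longrightarrow (M,[\beta]) \perp (N_2,[\gamma_2])
\]
with $(M,[\beta])$ unimodular. I would first fix an object $X = M \oplus N_1 \oplus N_2$ in $\catC$, so that $M$, $N_1$, $N_2$, and both orthogonal sums are (up to isomorphism) direct summands of $X$. By condition (B), the endomorphism ring $E := \End_\catC(X)$ is semiperfect; moreover, (B) forces the Krull--Schmidt property on the additive subcategory generated by $X$, so idempotents there split automatically (i.e.\ condition (A) is free).

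Next I would feed $X$ into the transfer procedure: the duality $*$, the natural isomorphism $\omega$, and the form parameter $(\veps,\Lambda)$ produce an anti-structure $(\sigma,u)$ together with a form parameter $\Lambda_E$ on $E$, making $(E,\sigma,u,\Lambda_E)$ a unitary ring in the sense of \S\ref{subsection:unitary-rings}. Simultaneously the functor $\Hom_\catC(X,-)$ furnishes a duality-preserving equivalence between the full additive subcategory $\catC_X \subseteq \catC$ generated by the summands of finite powers of $X$ and $\rproj{E}$. As in Proposition~\ref{PR:transfer-special-case}, this upgrades to an equivalence between the categories of $(\veps,\Lambda)$-quadratic spaces with base in $\catC_X$ and quadratic spaces over $(E,\sigma,u,\Lambda_E)$; it preserves orthogonal sums and unimodularity.

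Transferring our data along this equivalence turns $\phi$ into an isometry of quadratic spaces over $(E,\sigma,u,\Lambda_E)$ of exactly the form needed for Corollary~\ref{CR:main-II}, with the transferred image of $(M,[\beta])$ still unimodular. That corollary yields an isometry between the images of $(N_1,[\gamma_1])$ and $(N_2,[\gamma_2])$ over $E$, and transferring back along the equivalence produces the desired isometry $(N_1,[\gamma_1]) \cong (N_2,[\gamma_2])$ in $\catC$.

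The only delicate point is checking that \cite[Pr.~2.4]{QuSchSch79} truly produces a unitary ring in the present sense and that the resulting equivalence preserves unimodularity; this verification is already carried out in \cite{QuSchSch79} (compare the proof sketch of Proposition~\ref{PR:transfer-special-case}), and it nowhere uses the completeness hypothesis (C). That hypothesis enters the original argument of \cite[\S3.4]{QuSchSch79} only in order to invoke the classical cancellation theorem over $\invlim A/\Jac(A)^n$, and that step is exactly what Corollary~\ref{CR:main-II} now bypasses.
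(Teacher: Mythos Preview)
Your proposal is correct and follows essentially the same approach as the paper: the paper's argument (given in the paragraph preceding the corollary rather than in a formal proof environment) is precisely to apply the transfer of \cite[Pr.~2.4]{QuSchSch79} to reduce to cancellation over a semiperfect unitary ring and then invoke Corollary~\ref{CR:main-II} in place of \cite[Th.~6.2]{Reiter75}. Your write-up simply makes explicit the choice of $X$ and the role of condition~(B), which the paper leaves implicit.
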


    By the same methods (i.e.\ via transfer in the sense of \cite[Pr.~2.4]{QuSchSch79}), Corollary~\ref{CR:main-I}
    implies:

    \begin{cor}
        Assume condition (B) is satisfied, let $(P,[\beta])$ be
        a unimodular $(\veps,\Lambda)$-quadratic space, and let $Q,S$ be summands
        of $P$. Then any isometry $\psi:(Q,[\beta|_Q])\to(S,[\beta|_S])$ extends to
        an isometry $\vphi\in O(P,[\beta])$.
    \end{cor}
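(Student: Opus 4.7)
The plan is to carry out exactly the reduction suggested by the authors in the paragraph preceding Corollary~\ref{CR:cancellation-herm-cats}: transfer the problem from the hermitian category $\catC$ to the category of unimodular quadratic spaces over a semiperfect unitary ring, and then invoke Corollary~\ref{CR:main-I}.

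First I set $A=\End_\catC(P)$. By condition~(B), $P$ decomposes as a finite direct sum of objects with local endomorphism rings; the corresponding orthogonal idempotents in $\End_\catC(P)$ express $1_A$ as a sum of orthogonal idempotents $e_1,\dots,e_n$ with each $e_iAe_i$ local. Hence $A$ is semiperfect by characterization~(c) of~\ref{subsection:semiperfect}.

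Next I invoke the transfer construction of \cite[Pr.~2.4]{QuSchSch79} applied to the unimodular quadratic space $(P,[\beta])$. This produces a unitary ring structure $(\sigma,u,\Lambda')$ on $A$ (with $(\sigma,u)$ determined by the adjoint with respect to $h_\beta$, and $\Lambda'$ the image of $\Lambda_P$) together with a duality-preserving equivalence between the full subcategory of $\catC$ consisting of direct summands of $P^n$, $n\geq 0$, and $\rproj{A}$. Under this equivalence, $(P,[\beta])$ corresponds to a unimodular quadratic space $(\tilde{P},[\tilde\beta])$ over $(A,\sigma,u,\Lambda')$; the summands $Q,S\subseteq P$ correspond to summands $\tilde{Q},\tilde{S}\subseteq \tilde{P}$; the restricted forms $[\beta|_Q]$, $[\beta|_S]$ correspond to $[\tilde\beta|_{\tilde Q}]$, $[\tilde\beta|_{\tilde S}]$; and $\psi$ corresponds to an isometry $\tilde{\psi}:(\tilde{Q},[\tilde\beta|_{\tilde Q}])\to(\tilde{S},[\tilde\beta|_{\tilde S}])$.

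Since $(A,\sigma,u,\Lambda')$ is semiperfect and $(\tilde{P},[\tilde\beta])$ is unimodular, Corollary~\ref{CR:main-I} produces an isometry $\tilde{\vphi}\in O(\tilde{P},[\tilde\beta])$ extending $\tilde{\psi}$. Pulling $\tilde\vphi$ back through the hermitian equivalence yields the desired $\vphi\in O(P,[\beta])$ with $\vphi|_Q=\psi$. The only real work is in spelling out that the transfer functor behaves correctly with respect to summand structure, restriction of the quadratic form, and isometries between restrictions; this is a routine unwinding of \cite[Pr.~2.4]{QuSchSch79}, entirely in the spirit of Proposition~\ref{PR:transfer-special-case} (or Remark~\ref{RM:herm-category}). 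No new ideas beyond those already used to derive Corollary~\ref{CR:cancellation-herm-cats} are required, so the main obstacle is purely bookkeeping rather than substance.
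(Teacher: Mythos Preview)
Your proposal is correct and follows exactly the approach the paper indicates: the paper's entire proof is the one-line remark ``By the same methods (i.e.\ via transfer in the sense of \cite[Pr.~2.4]{QuSchSch79}), Corollary~\ref{CR:main-I} implies,'' and you have simply unpacked this remark, showing that $\End_\catC(P)$ is semiperfect by condition~(B), applying the transfer of \cite[Pr.~2.4]{QuSchSch79}, and invoking Corollary~\ref{CR:main-I}.
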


    We now combine Corollary~\ref{CR:cancellation-herm-cats} with  results from  \cite{BayerFain96}, \cite{BayerMold12} and \cite{BayFiMol13}
    to obtain cancellation results for systems of (not-necessarily unimodular) sesquilinear forms.
    Henceforth, let $R$ be a commutative  ring, and let $\catC$ be an $R$-category
    equipped with $R$-linear hermitian structures $\{*_i,\omega_i\}_{i\in I}$; see \cite[\S2.4, \S4]{BayFiMol13} for
    the definitions.

    \begin{cor}\label{CR:systems}
        Assume that $R$ is local and henselian, $2\in\units{R}$, and
        at least one of the following holds:
        \begin{enumerate}
            \item[(1)] $R$ is noetherian and
            $\End_{\catC}(M)$ is finitely generated as an $R$-module for all $M\in\catC$.
            \item[(2)] $R$ is a valuation ring, and $\End_{\catC}(M)$ is $R$-torsion-free
            and has finite rank
            for all $M\in\catC$.
        \end{enumerate}
        Then systems of sesquilinear forms over $(\catC,\{*_i,\omega_i\}_{i\in I})$ cancel from orthogonal sums.
    \end{cor}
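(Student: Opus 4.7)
The plan is to reduce Corollary~\ref{CR:systems} to Corollary~\ref{CR:cancellation-herm-cats} via the translation developed in \cite{BayerFain96}, \cite{BayerMold12} and \cite{BayFiMol13}, which was already invoked in the passage preceding the statement. Concretely, those papers show that systems of (not-necessarily unimodular) sesquilinear forms over $(\catC,\{*_i,\omega_i\}_{i\in I})$ correspond bijectively and functorially to unimodular $(\veps,\Lambda)$-hermitian forms over an auxiliary hermitian $R$-category $\catD$, with the correspondence respecting isometries and orthogonal sums. (The hypothesis $2\in \units{R}$ is what makes this translation genuine, as sesquilinear forms then split into hermitian and anti-hermitian parts.) Once this equivalence is in place, cancellation over $\catD$ immediately yields cancellation over $\catC$.

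Thus the task reduces to verifying hypothesis (B) of Corollary~\ref{CR:cancellation-herm-cats} for $\catD$, namely that $\End_{\catD}(N)$ is semiperfect for every $N\in\catD$. For any object $N\in\catD$ corresponding to $M\in\catC$ equipped with a system of forms, the construction in \cite{BayFiMol13} realizes $\End_{\catD}(N)$ as an $R$-subalgebra of an $R$-algebra built from $\End_{\catC}(M)$ by finitely many standard operations (direct sums, opposite, tensoring with an auxiliary finite free $R$-module encoding the forms). In particular, under hypothesis~(1) this endomorphism ring remains finitely generated as an $R$-module, and under hypothesis~(2) it remains $R$-torsion-free of finite rank. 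Applying Proposition~\ref{PR:semiperfect-examples} to $\End_{\catD}(N)$, which is an algebra over the henselian local ring $R$, yields that $\End_{\catD}(N)$ is semiperfect; hence it decomposes as a finite sum of modules with local endomorphism rings, so condition (B) holds.

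With condition (B) verified, Corollary~\ref{CR:cancellation-herm-cats} applies and gives cancellation of unimodular $(\veps,\Lambda)$-hermitian forms over $\catD$. Translating back through the Bayer-Fluckiger equivalence, we obtain cancellation of systems of sesquilinear forms over $(\catC,\{*_i,\omega_i\}_{i\in I})$, which is the claim.

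The main obstacle, and essentially the only non-routine point, is the verification that $\End_{\catD}(N)$ satisfies the hypotheses of Proposition~\ref{PR:semiperfect-examples}. This requires unpacking the definition of $\catD$ in \cite{BayFiMol13} and checking that each construction involved preserves either finite generation as an $R$-module (case (1)) or torsion-freeness with finite rank (case (2)); one must in particular take care that the passage from a family of forms indexed by $I$ to a single hermitian form does not introduce an infinitely generated factor. Once this bookkeeping is done, the rest of the argument is a direct transfer to the semiperfect setting of Corollary~\ref{CR:cancellation-herm-cats}.
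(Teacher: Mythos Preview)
Your approach is essentially the same as the paper's: reduce via \cite[Th.~4.1]{BayFiMol13} to unimodular hermitian forms over the auxiliary category $\catD=\TDA[I]{\catC}$, then apply Corollary~\ref{CR:cancellation-herm-cats} after checking semiperfectness of endomorphism rings via Proposition~\ref{PR:semiperfect-examples}. Two small points of precision: first, the paper notes that $\End_{\catD}(Z)$ is concretely a sub-$R$-algebra of $\End_{\catC}(M)\times\End_{\catC}(N)^{\op}$ for suitable $M,N\in\catC$, which immediately dispels your worry about the index set $I$ and makes the finite-generation/finite-rank check trivial; second, the role of $2\in\units{R}$ is not to enable the equivalence of \cite{BayFiMol13} (which holds regardless) but rather to identify $1$-hermitian forms with $(1,\Lambda)$-quadratic forms so that Corollary~\ref{CR:cancellation-herm-cats} applies.
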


    \begin{proof}
        This is similar to the proof of \cite[Th.~5.2]{BayFiMol13}.
        By \cite[Th.~4.1]{BayFiMol13}, the category of sesquilinear forms over
        $(\catC,\{*_i,\omega_i\}_{i\in I})$ is equivalent to the category of unimodular
        $1$-hermitian forms over another hermitian category $(\catD,*,\omega)$, where $\catD$ is the category $\TDA[I]{\catC}$
        constructed in \cite[\S4]{BayFiMol13}.
        It therefore enough to prove that unimodular $1$-hermitian forms over $(\catD,*,\omega)$
        cancel from orthogonal sums. Note that since $2\in \units{R}$, there is only one form parameter
        $(1,\Lambda)$ for $(\catD,*,\omega)$, and $1$-hermitian forms and $(1,\Lambda)$-quadratic forms are essentially
        the same.
        By construction, the endomorphism ring  of an object  $Z\in\catD$ is a sub-$R$-algebra
        of  $\End_{\catC}(M)\times\End_{\catC}(N)^\op$ for some $M,N\in \catC$, so by
        Corollary~\ref{CR:cancellation-herm-cats}, it is enough to check that such rings are semiperfect.
        This is indeed the case by Proposition~\ref{PR:semiperfect-examples}.
    \end{proof}

    As a special case of Corollary~\ref{CR:systems}, we get:

    \begin{cor}\label{CR:systems-II}
        Let $A$ be an $R$-algebra and let $\{\sigma_i\}_{i\in I}$ be a system of $R$-involutions
        on $A$.
        Assume that is $R$ local and henselian, $2\in\units{R}$, and at least one of the following holds:
        \begin{enumerate}
            \item[(1)] $R$ is noetherian and
            $A$ is finitely generated as an $R$-module.
            \item[(2)] $R$ is a valuation ring, and $A$ is $R$-torsion-free
            of finite rank.
        \end{enumerate}
        Then systems of sesquilinear forms over $(A,\{\sigma_i\})$ (that are defined on f.g.\ projective $A$-modules)
        cancel from orthogonal sums.
    \end{cor}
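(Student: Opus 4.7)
The plan is to realize the corollary as an instance of Corollary~\ref{CR:systems} applied to the $R$-linear hermitian category $\catC=\rproj{A}$ equipped, for each $i\in I$, with the hermitian structure $(*_i,\omega_i)$ coming from the involution $\sigma_i$ (as in \ref{subsection:unitary-rings}, with $u=1$). Once this identification is made, a system of sesquilinear forms over $(A,\{\sigma_i\})$ defined on f.g.\ projective $A$-modules is literally a system of sesquilinear forms over $(\catC,\{*_i,\omega_i\}_{i\in I})$ in the sense of \cite[\S2.4]{BayFiMol13}, so the desired cancellation will follow immediately from Corollary~\ref{CR:systems} once its hypotheses are verified for this $\catC$.

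The verification reduces to checking the assumption on endomorphism rings. For any $M\in\rproj{A}$ there is an integer $n$ and an idempotent $e\in\nMat{A}{n}$ with $M\cong e A^n$, so $\End_{\catC}(M)\cong e\nMat{A}{n}e$ is an $R$-submodule of $\nMat{A}{n}$ (in fact a direct summand as an $R$-module). In case (1), $\nMat{A}{n}$ is finitely generated over the noetherian ring $R$, hence so is its submodule $\End_{\catC}(M)$, giving hypothesis (1) of Corollary~\ref{CR:systems}. In case (2), $\nMat{A}{n}$ is $R$-torsion-free of finite rank (its rank over $R$ being $n^2\rank_R A$), and being a submodule of a finite-rank torsion-free module, $\End_{\catC}(M)$ inherits both properties; this gives hypothesis (2).

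Thus Corollary~\ref{CR:systems} applies and yields cancellation. The only part that requires real care is the matching of frameworks: one must check that the equivalence between sesquilinear forms on f.g.\ projective $A$-modules and sesquilinear forms on the hermitian $R$-category $(\rproj{A},\{*_i,\omega_i\})$ is compatible with orthogonal sums and with isometries. This is straightforward (it is essentially Remark~\ref{RM:herm-category} repeated for each $i$), so no genuinely new computation is needed beyond the endomorphism-ring bookkeeping above.
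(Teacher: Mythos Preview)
Your proposal is correct and follows the same approach as the paper, which simply presents the corollary as a special case of Corollary~\ref{CR:systems} without spelling out the verification. Your explicit check that $\End_{\rproj{A}}(M)\cong e\nMat{A}{n}e$ inherits the required finiteness properties from $A$ in each of the two cases is exactly the routine bookkeeping the paper leaves implicit.
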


    Finally, we  use  \cite{BayerFain96}
    to show that  \emph{non-unimodular} hermitian forms over \emph{non-henselian} valuation
    rings cancel from orthogonal sums. Here, the prefix ``non-'' stands for ``not-necessarily'' and
    not for absolute negation.

    \begin{cor}\label{CR:non-unimod}
        Let $(R,\sigma)$ be an arbitrary valuation ring with involution, and let $u\in R$ be such that
        $u^\sigma u=1$. Suppose that
        there exists $r\in R$ with $r+r^{\sigma}u\in\units{R}$ (e.g.\ if $1+u\in \units{R}$).
        Then non-unimodular hermitian forms over $(R,\sigma)$ cancel from orthogonal sums.
    \end{cor}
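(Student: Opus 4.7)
The plan is to reduce to the unimodular case via the construction of \cite{BayerFain96}. Under the hypothesis that there exists $r\in R$ with $r+r^\sigma u\in\units{R}$, Bayer-Fluckiger and Fainsilber construct an equivalence between the category of (not-necessarily unimodular) $u$-hermitian forms over $(R,\sigma)$ and the category of unimodular hermitian forms over a unitary ring $(A,\tau,v,\Gamma)$ naturally built from $(R,\sigma,u)$; the special element $r$ plays the role of a ``half of $1$'' needed to recover the original form from its unimodular encoding. This equivalence respects orthogonal sums, so cancellation in the latter category transfers back to cancellation of non-unimodular hermitian forms over $(R,\sigma)$.

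In order to apply Corollary~\ref{CR:main-II}, the essential task is to verify that $A$ is semiperfect. The ring $A$ arising from the Bayer-Fainsilber construction is a finitely generated $R$-algebra whose structure directly reflects the hermitian data over $R$; concretely, it is realized as a subring of a matrix algebra over $R$ whose decomposition into local factors can be read off from $R$ alone. Since $R$ is a valuation ring, it is local and hence semiperfect, and the factors of $A$ are themselves local, so $A$ is a finite product of local rings and therefore semiperfect. With this in hand, Corollary~\ref{CR:main-II} gives cancellation of unimodular hermitian forms over $(A,\tau,v,\Gamma)$, and pulling back along the Bayer-Fainsilber equivalence yields the desired cancellation for non-unimodular hermitian forms over $(R,\sigma)$.

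The main obstacle is the semiperfectness of $A$ in the absence of a henselianness hypothesis on $R$. Finitely generated algebras over a non-henselian valuation ring need not be semiperfect in general, so one cannot invoke Proposition~\ref{PR:semiperfect-examples}(2) as in the proof of Corollary~\ref{CR:systems}. What rescues us here is the very specific form of $A$ produced by \cite{BayerFain96} for the single-form, hermitian (rather than system-of-sesquilinear) case: $A$ is assembled from copies of $R$ together with a controlled involution-compatible twist, rather than being obtained by an integral extension of $R$, so its local decomposition is already visible over $R$ itself without any passage to completions or henselizations. This is why the henselian hypothesis required for the analogous statements about systems of sesquilinear forms (Corollaries~\ref{CR:systems} and~\ref{CR:systems-II}) can be dropped in the present setting.
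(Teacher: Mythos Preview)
There is a genuine gap. The Bayer--Fainsilber construction in \cite{BayerFain96} does \emph{not} produce a single unitary ring $(A,\tau,v,\Gamma)$; it produces a hermitian \emph{category} $\catD=\Mor(\rproj{R})$, whose objects are triples $(P,f,P')$ with $P,P'\in\rproj{R}$ and $f\in\Hom_R(P,P')$. Different non-unimodular hermitian forms over $R$ land on different objects of $\catD$, so there is no single ring $A$ to which Corollary~\ref{CR:main-II} can be applied. The paper accordingly invokes Corollary~\ref{CR:cancellation-herm-cats} instead, and the burden becomes verifying condition (B): every object of $\Mor(\rproj{R})$ is a finite direct sum of objects with local endomorphism ring.

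Your paragraph on semiperfectness is where the argument actually has to happen, and it contains no mathematics---phrases like ``assembled from copies of $R$ with a controlled involution-compatible twist'' and ``its local decomposition is already visible over $R$ itself'' are not claims one can verify. The real reason henselianness is unnecessary is concrete and specific to valuation rings: any matrix over a valuation ring admits a Smith normal form, so every object $(P,f,P')$ in $\Mor(\rproj{R})$ decomposes as a direct sum of objects of the shapes $(R,0,0)$, $(0,0,R)$, and $(R,t,R)$ with $t\neq 0$, each of which has endomorphism ring isomorphic to $R$ and hence local. This Smith-normal-form step is the entire content of the corollary beyond the categorical machinery, and it is missing from your proposal.
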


    \begin{proof}
        Write $v=r+r^\sigma u$. Then $v\in \units{R}$ and $v^\sigma u=v$.
        By conjugating with $v^{-1}$ (see Proposition~\ref{PR:conjugation-of-unitary-rings}) and replacing
        $r$ with $rv^{-1}$, we may
        assume that $u=1$ and $r+r^\sigma=1$.

        By \cite[Th.~4.1]{BayerFain96}, the category
        of \emph{arbitrary} $1$-hermitian forms over $(R,\sigma)$ is equivalent
        to the category of \emph{unimodular} $1$-hermitian forms over a hermitian category $(\catD,*,\omega)$ (see \cite[\S3]{BayerFain96} for its definition).
        The category $\catD$ is the category of \emph{morphisms in $\rproj{R}$}, denoted $\Mor(\rproj{R})$. (One can also
        use \cite[Th.~3.2]{BayFiMol13} to get essentially the same result.)
        Notice that the $\Hom$-sets of $\catD$ are $R$-modules and it not hard to check that $(f\cdot a)^*=f\cdot a^\sigma$
        for every morphism $f$ in $\catD$ and $a\in R$.
        Since $r+r^\sigma =1$,
        the map $(Q,h)\mapsto (Q,[rh])$ taking $1$-hermitian forms over $\catD$
        to $(1,\Lambda^{\min})$-quadratic spaces over $\catD$ is an isomorphism of categories
        (and  there is only one form parameter $(1,\Lambda)$ for $(\catD,*,\omega)$).
        Now, by Corollary~\ref{CR:cancellation-herm-cats}, it is enough to prove
        that every object of $\Mor(\rproj{R})$ is the direct sum of objects with local endomorphism rings.

        Recall that an object of $\Mor(\rproj{R})$ consists of a triple $(P,f,P')$ such
        that $P,P'\in \rproj{R}$ and $f\in\Hom_R(P,P')$. The endomorphism ring of $(P,f,P')$
        is the set of pairs $(g,g')\in\End_R(P)\times\End_R(P')$ such that $g'f=fg$.
        Using the fact that $R$ is a valuation ring, one can show that
        every $(P,f,P')$ is the direct sum of objects of the form
        $(R,0,0)$, $(0,0,R)$ and $(R,t,R)$ with $t\neq 0$: The proof essentially boils
        down to showing that for every matrix $f=(f_{ij})\in\nMat{R}{n\times m}$
        there are invertible matrices $w\in \nMat{R}{n}$ and $w'\in\nMat{R}{m}$ such
        that $(a_{ij}):= wfw'$ is diagonal (meaning that $a_{ij}=0$ for $i\neq j$),
        and this is
        well-known. The endomorphism rings of $(R,0,0)$, $(0,0,R)$ and $(R,t,R)$ are easily
        seen to be $R$, which is local, so we are done.
    \end{proof}

\section{Generation by Reflections}
\label{section:gen-by-ref}

    Let $(P,[\beta])$ be a unimodular quadratic
    space defined over a semiperfect unitary ring. Denote
    by $O'(P,[\beta])$ the subgroup of $O(P,[\beta])$ generated by
    reflections. In this section, we  describe $O'(P,[\beta])$ and show that, apart from
    an obvious exception, it is a subgroup of finite index in $O(P,[\beta])$, and that index
    can be determined in terms of $A$ and $P$.

\subsection{Dickson's Invariant}
\label{subsection:dickson}

    Let $(A,\sigma,u,\Lambda)$ be an orthogonal simple artinian unitary ring
    (see \ref{subsection:orthogonality}), let $K=\Cent(A)$, and let $(P,[\beta])$
    be a unimodular quadratic space.
    We now recall \emph{Dickson's Invariant} (also called \emph{pseudodeterminant} or \emph{quasideterminant}),
    which is a group homomorphism
    \[\Delta=\Delta_{[\beta]}:O(P,[\beta])\to \Z/2\Z\ . \]
    This homomorphism will
    be used in describing what is $O'(P,[\beta])$.
    The map $\Delta$ is related to the reduced norm map on $E:=\End_A(P)$
    via
    \begin{equation}\label{EQ:Dickson-determinant-connection}
    \rdnorm[E/K](\psi)=(-1)^{\Delta(\psi)}\qquad\forall \psi\in O(P,[\beta]) \ ,
    \end{equation}
    so one can use $\rdnorm[E/K]$ to define $\Delta$ in case $\Char K\neq 2$.

    Unfortunately, there seems to be no reference defining Dickson's invariant directly for
    quadratic forms over noncommutative central simple algebras (but see~\cite[p.~160]{Taylor92GeometryOfClassicalGroups}
    for the case $A=K$, and \cite{Ba74}, \cite[\S{}IV.5]{Kn91} for arbitrary commutative rings).
    We  therefore satisfy with giving  an ad-hoc definition that suits our needs,
    but may seem unnatural. In the end, we shall briefly explain how to obtain a more intrinsic
    definition
    by using the Dickson invariant of \emph{quadratic pairs} introduced in \cite[\S12C]{InvBook}.

\medskip

    We  start by recalling  Dickson's invariant in the case $A=K$ (the orthogonality then forces
    $\sigma=\id_K$, $u=1$ and $\Lambda=0$). In this case,
    $(P,[\beta])$
    is just a quadratic space in the classical sense. Dickson's invariant can then be defined by
    \begin{align*}
        \Delta(\psi)= \dim_K\im(1-\psi)+ 2\Z \qquad\forall\, \psi\in O(P,[\beta])\ .
    \end{align*}
    The map $\Delta$ is indeed a group homomorphism (\cite[Th.~11.43]{Taylor92GeometryOfClassicalGroups}),
    and moreover, it is a morphism of  algebraic groups over $K$ (\cite{Ba74}).
    See also \cite[Df.~12.12]{InvBook} and \cite[p.~129]{Grove02ClassicalGroups}
    for alternative definitions in characteristic $2$. (The equivalence of the definitions, and the identity
    \eqref{EQ:Dickson-determinant-connection}, can be verified using the references specified.
    In particular, one should only verify that the definitions coincide on reflections
    by \cite[Th.~11.39]{Taylor92GeometryOfClassicalGroups}.)

    Assume now that $A$ is arbitrary and let $E=\End_A(P)$. Then $E$ is a central simple $K$-algebra.
    We define $\Delta$ by
    \[
    \Delta(\psi)=\frac{\dim_K(1-\psi)E}{\deg E} +2\Z \qquad \forall\, \phi\in O(P,[\beta])
    \]
    It is easy to see that this definition agrees with the  definition in the case $A=K$. However,
    we have to check that $\Delta$ is indeed a homomorphism of  groups.

    \begin{prp}\label{PR:Delta-is-a-homomorphism}
        If $A$ is non-split, then $\Delta\equiv 0$. Otherwise, $\Delta$ is a
        surjective group homomorphism, and furthermore,
        it is a homomorphism of algebraic groups defined over $K$.
    \end{prp}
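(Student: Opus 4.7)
The plan is to separate the two cases in the statement and, in the split case, reduce to the classical Dickson invariant on a quadratic space over the commutative field $K$ via the $e$-transfer of Proposition~\ref{PR:transfer-special-case}. The definition of $\Delta$ depends only on $A$, $P$, and $\psi \in E := \End_A(P)$, not on $[\beta]$, so it is unaffected by the conjugation to standard form provided by Proposition~\ref{PR:conjugation-to-standard-form}; in particular, I may assume $(A,\sigma,u,\Lambda)$ is standard throughout.

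For the non-split case, orthogonality forces $\sigma$ to be an involution of the first kind, so $\ind A$ is a power of $2$ by \cite[Cr.~2.8]{InvBook}. Morita theory gives that $E$ is Brauer equivalent to $A$, so $d := \ind E = \ind A$; when $A$ is non-split, $d > 1$ and hence $d$ is even. Writing $E \cong \nMat{D_E}{m}$ with $\dim_K D_E = d^2$, the unique simple right $E$-module has $K$-dimension $m d^2$, so every right ideal of $E$ has $K$-dimension a multiple of $m d^2$. Thus $\dim_K(1-\psi)E$ is a multiple of $m d^2$, and dividing by $\deg E = m d$ gives a multiple of $d$. Since $d$ is even, $\Delta(\psi) \equiv 0$.

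For the split case, $A = \nMat{K}{n}$ with $\sigma$ the matrix transposition. Set $e := e_{11}$, so $e^\sigma = e$ and $eAe = K$. By Proposition~\ref{PR:transfer-special-case}, $e$-transfer sends $(P,[\beta])$ to a unimodular classical quadratic space $(Pe, [\beta_e])$ over $K$, with $\psi \mapsto \psi|_{Pe}$ giving an isomorphism $O(P,[\beta]) \cong O(Pe,[\beta_e])$. Writing $P \cong (eA)^m$ gives $Pe \cong K^m$, and restriction to $Pe$ identifies $E$ with $\End_K(Pe) \cong \nMat{K}{m}$, matching $\deg E = m$. A direct computation in $\nMat{K}{m}$ shows $\dim_K(xE) = m \cdot \rank(x|_{Pe})$ for every $x \in E$, hence $\Delta(\psi) \equiv \dim_K \im((1-\psi)|_{Pe}) \pmod{2}$. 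This is exactly the classical Dickson invariant of $\psi|_{Pe}$.

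The classical Dickson invariant on $O(Pe,[\beta_e])$ is well known to be a surjective group homomorphism and a morphism of algebraic groups defined over $K$ (\cite{Ba74}, \cite[\S12C]{InvBook}, \cite[Th.~11.43]{Taylor92GeometryOfClassicalGroups}); surjectivity may be seen from any reflection at an anisotropic vector, whose Dickson invariant equals $1$. Since $\psi \mapsto \psi|_{Pe}$ is itself an isomorphism of algebraic $K$-groups (being induced by Morita equivalence), these properties transfer to $\Delta$ on $O(P,[\beta])$. The main obstacle is the explicit matching of the two definitions via the $e$-transfer computation; once that identification is in place, the classical results, including their treatment of the characteristic-$2$ case (where the reduced-norm approach is unavailable), carry the argument through.
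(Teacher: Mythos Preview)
Your proof is correct and follows essentially the same route as the paper: in the non-split case you use that $\ind A$ is an even power of $2$ to force $\Delta\equiv 0$, and in the split case you conjugate to standard form and apply $e$-transfer with $e=e_{11}$ to reduce to the classical Dickson invariant over $K$. Your non-split argument is in fact cleaner than the paper's (which asserts that $\deg E$ must be odd, where $\ind E$ is what is actually shown to be odd), and your explicit verification that $\dim_K((1-\psi)E)/\deg E=\dim_K\im((1-\psi)|_{Pe})$ makes the transfer step more transparent than the paper's one-line appeal to invariance of $E$.
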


    \begin{proof}
        Assume $A$ is non-split. If $\Delta(\psi)=1+2\Z$, then $\deg E$ must be odd (because
        it divides $\dim_K(1-\psi)E$). On the other hand, $E$ is Brauer equivalent
        to $A$, which has an involution of the first
        kind (Proposition~\ref{PR:conjugation-to-standard-form}), and hence $\ind E=\ind A$ is a power of two (\cite[Cr.~2.6]{InvBook}).
        Thus, $\ind A=1$, i.e.\ $A$ is split. (Compare with \cite[Lm.~2.6.1]{Knes69GaloisCohomology}.)

        Assume $A$ is split.
        Observe that  the definition of $\Delta$ depends only on the isomorphism class of the endomorphism
        ring $E=\End_A(P)$, which does not change under conjugation and $e$-transfer (see Propositions~\ref{PR:conjugation-of-unitary-rings}
        and~\ref{PR:transfer-special-case}). These
        transitions also do not affect split-orthogonality by Proposition~\ref{PR:orth-not-affected-by-conj},
        so we may freely apply them.
        Now, by conjugation,
        we may  assume $(A,\sigma,u,\Lambda)$ is standard (Proposition~\ref{PR:conjugation-to-standard-form}),
        i.e.\ $u=1$ and $\sigma$ is the matrix transpose involution. Let $e$ be the matrix unit $e_{11}$.
        By applying $e$-transfer, we may assume $A=K$.
        But in this case it is known that $\Delta$ is a surjective group homomorphism,
        and also a homomorphism
        of algebraic groups defined over $K$.
    \end{proof}

    We now compute the Dickson invariant of reflections.

    \begin{prp}\label{PR:Dickson-inv-of-reflections}
        Let $(A,\sigma,u,\Lambda)$ be an orthogonal
        simple artinian unitary ring, let $e\in A$ be an idempotent,
        and let $(P,[\beta])$ be a unimodular quadratic space over $(A,\sigma,u,\Lambda)$
        with $P\neq 0$.
        Then for every $e$-reflection $s\in O(P,[\beta])$ we have
        \[
        \Delta(s)=\deg eAe+2\Z\ .
        \]
    \end{prp}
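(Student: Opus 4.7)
My plan is to reduce to the split case and then compute $\Delta(s)$ explicitly by Morita-reducing to a classical quadratic space over $K$ via an $e_0$-transfer. If $A$ is non-split, then $A = \nMat{D}{n}$ for a central division algebra $D$ whose degree is a power of $2$ that is at least $2$ (\cite[Cr.~2.8]{InvBook}), so for any nonzero idempotent $e \in A$, $eAe$ is Morita equivalent to $D^{\op}$ and $\deg eAe$ is a positive multiple of $\deg D$, hence even; combined with Proposition~\ref{PR:Delta-is-a-homomorphism}, which gives $\Delta \equiv 0$ in this case, both sides of the claimed equality vanish modulo~$2$. Assume henceforth that $A$ is split. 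After a conjugation (Proposition~\ref{PR:conjugation-to-standard-form}), which preserves $\Delta$ (the endomorphism ring $E$ is unchanged), $\deg eAe$ (Proposition~\ref{PR:orth-not-affected-by-conj}), and the notion of an $e$-reflection (Remark~\ref{RM:conjugation-does-not-affect-reflections}(i)), we may assume $(A,\sigma,u,\Lambda)$ is in standard form with $A = \nMat{K}{n}$, $\sigma$ transpose, $u = 1$, $\Lambda = \Lambda^{\min}$. By Lemma~\ref{LM:reflections-manipulations}(i), we may further replace $e$ with any $f \sim e$; choosing $f = e_{11}+\cdots+e_{kk}$ (where $k := \rank(e) = \deg eAe$), we may assume $e = e^\sigma$. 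Write $P = \nMat{K}{m \times n}$ and $\beta(x,y) = x^T G y$ for some $G \in \nMat{K}{m}$, so $h_\beta(x,y) = x^T H y$ with $H := G + G^T$ invertible by unimodularity. Applying the $e_0$-transfer with $e_0 = e_{11}$ (Proposition~\ref{PR:transfer-special-case})---which preserves $\Delta$ since $E = \End_A(P)$ is Morita-invariant---we reduce to computing $\dim_K \im(1 - s|_V) \bmod 2$, where $V := Pe_0 \cong K^m$ carries a classical quadratic form $q_V$.

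Writing $y = [Y, 0]$ with $Y \in \nMat{K}{m \times k}$ consisting of the first $k$ columns of $y$, and taking $c$ supported in the upper-left $k \times k$ block with entries given by $C \in \GL_k(K)$, a direct computation yields $s|_V = I_m - Y C^{-1} Y^T H$, so $\Delta(s) \equiv \rank(YC^{-1}Y^T) \pmod 2$ (as $H$ is invertible). The constraint $c \in e\hat\beta(y)e$ forces $C = Y^T G Y + M$ with $M$ an alternating matrix in $\nMat{K}{k}$. The key observation is that the bilinear form $B$ on $W := \ker Y$ defined by $B(w,w') = w^T C w'$ is alternating: indeed $Yw = Yw' = 0$ yields $w^T Y^T G Y w' = 0$, so $B(w,w') = w^T M w'$. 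In any characteristic, the rank of an alternating bilinear form is even, giving $\dim W \equiv \dim \rad(B) \pmod 2$. For $w \in W$ one computes $Cw = Mw$ and $C^T w = -Mw = -Cw$, so $C^T W = CW$; consequently $\dim \rad(B) = \dim(W \cap C^{-T}\im(Y^T)) = \dim(CW \cap \im(Y^T))$. Combined with the elementary identity $\rank(YC^{-1}Y^T) = \rank Y - \dim(CW \cap \im(Y^T))$ (which follows from the description $\ker(YC^{-1}Y^T) = \{v : Y^T v \in CW\}$), this yields $\rank(YC^{-1}Y^T) \equiv \rank Y - \dim W = 2\rank Y - k \equiv k \pmod 2$, as required.

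The main difficulty is identifying and exploiting the alternating bilinear form $B$, which works uniformly across all characteristics. The determinant-based argument available when $\Char K \neq 2$ (namely, $\det(s|_V) = (-1)^k$ via Sylvester's identity, combined with \eqref{EQ:Dickson-determinant-connection}) collapses in characteristic $2$, whereas the alternating-form approach sidesteps this limitation. A secondary issue is the Morita bookkeeping needed to obtain the explicit formula $s|_V = I_m - Y C^{-1} Y^T H$, which is a routine but careful computation with matrix units.
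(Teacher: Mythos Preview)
Your proof is correct and takes a genuinely different route from the paper's in the split case. Both arguments begin identically (non-split case via Proposition~\ref{PR:Delta-is-a-homomorphism} and even index; conjugation to standard form; replacing $e$ by an equivalent $\sigma$-invariant idempotent). After that, the paper applies $e$-transfer to reduce to $e=1$, base-changes to the algebraic closure, and uses the Zariski connectedness of the parameter space $\{(y,\lambda):\beta(y,y)+\lambda\in\units{A}\}$ together with the fact that $\Delta$ is a morphism of algebraic groups to reduce to computing $\Delta$ on a \emph{single} reflection, which is then assembled from $e_{11}$-reflections via Lemma~\ref{LM:reflections-manipulations}. Your argument instead stays over the original field, writes the $e$-reflection explicitly as $I_m-YC^{-1}Y^TH\in\nMat{K}{m}=\End_A(P)$, and computes the rank parity directly. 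The key device --- observing that the restriction of $(w,w')\mapsto w^TCw'$ to $\ker Y$ is alternating because $C-M=Y^TGY$ vanishes there, whence $\dim\ker Y\equiv\dim\rad(B)\pmod 2$ --- is a clean, characteristic-free substitute for both the connectedness argument and the determinant computation \eqref{EQ:Dickson-determinant-connection}. What you gain is a fully elementary, self-contained proof that avoids the algebraic-group structure of $\Delta$ and the passage to the algebraic closure; what the paper's approach gains is that it never needs to track an explicit matrix formula for the reflection, relying instead on the single classical fact $\Delta(s)=1$ for reflections of quadratic spaces over a field. One minor comment: your invocation of $e_0$-transfer is really just notation for the identification $\End_A(P)\cong\End_K(Pe_0)$ --- the reflection itself does not become an $e'$-reflection for any idempotent $e'$ of the transferred ring $K$, but you do not need it to, since you only use that the matrix of $s$ and hence $\Delta(s)$ is unchanged.
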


    \begin{proof}
        We assume $e\neq 0$; the proposition becomes trivial otherwise.
        Also, if $A$ is non-split, then $\ind eAe=\ind A$ is even, so we are done by Proposition~\ref{PR:Delta-is-a-homomorphism}.

        Assume $A$ is split, i.e.\ $A=\nMat{K}{n}$ where $K$ is a field. By conjugation, we may
        assume that $\sigma$ is the transpose involution and $u=1$
        (Proposition~\ref{PR:conjugation-to-standard-form}, Remark~\ref{RM:conjugation-does-not-affect-reflections}(i)).
        Let $\{e_{ij}\}$ be the standard matrix units of $A$.
        Then $e$ is equivalent to $\sum_{i=1}^re_{ii}$ for some $r$. By Lemma~\ref{LM:reflections-manipulations},
        me replace $e$ with $\sum_{i=1}^re_{ii}$ to assume $e^\sigma=e$.
        Now,
        by applying $e$-transfer, we may
        assume $e=1$ (Proposition~\ref{PR:transfer-special-case}, Remark~\ref{RM:conjugation-does-not-affect-reflections}(ii)).
        We further tensor $(A,\sigma,u,\Lambda)$ and $(P,[\beta])$ with an algebraic closure $F$ of $K$, to assume
        $K$ is algebraically closed. (Namely, we replace $(A,\sigma,u,\Lambda)$ with
        $(A\otimes_KF,\sigma\otimes_K\id_{F},u\otimes 1,\Lambda\otimes_KF)$,
        and $(P,[\beta])$ with $(P\otimes_KF,[\beta_{F}])$, where
        $\beta_{F}$ is defined by $\beta_{F}(x\otimes a,x'\otimes a')=\beta(x,x')\otimes aa'$
        for all $x,x'\in P$, $a,a'\in A$. The details are left to the reader.)

        Let $S$ be the set of pairs $(y,\lambda)\in P\times\Lambda^{\min}$ satisfying
        $\beta(y,y)+\lambda\in\nGL{K}{n}$. Then $S$ is open in the Zaritzki topology of $P\times \Lambda$
        (viewed as an affine space over $K$), and hence connected.
        Since  $\Delta:O(P,[\beta])\to \Z/2\Z$ is a morphism of algebraic groups
        (Proposition~\ref{PR:Delta-is-a-homomorphism}),
        the map
        \[(y,\lambda)\mapsto \Delta(s_{y,1,\beta(y,y)+\lambda})~:~S \to \Z/2\Z\]
        is continuous in the Zaritzki topology.
        Therefore, since $S$ is connected,   it is  enough to prove that
        $\Delta(s)=\deg A+2\Z=n+2\Z$ for \emph{some} reflection $s$.

        Let $\{e_{ij}\}_{i,j}$ be the standard matrix units of $A=\nMat{K}{n}$.
        By Lemma~\ref{LM:reflections-manipulations}(i), every $e_{11}$-reflection is an
        $e_{ii}$-reflection for any $i$, so by Lemma~\ref{LM:reflections-manipulations}(ii),
        the product of $n$ $e_{11}$-reflections is a reflection.
        Since $\Delta$ is a group homomorphism, it is
        therefore enough to prove that there is an $e_{11}$-reflection $s$ with $\Delta(s)=1+2\Z$.
        Now,
        note that $e_{11}Ae_{11}\cong K$.
        Applying $e_{11}$-transfer (together with Remark~\ref{RM:conjugation-does-not-affect-reflections}(ii)), we are reduced to
        show that when $A=K$, there exists a reflection
        $s$ with $\Delta(s)=1+2\Z$.
        But this well-established; see \cite[Ex.~12.13]{InvBook}, for instance.
    \end{proof}

    \begin{remark}\label{RM:alt-def-of-Dickson}
        Another way to define $\Delta_{[\beta]}$ when $A$ is an arbitrary
        central simple $K$-algebra is by relating $(P,[\beta])$ with a \emph{quadratic pair}
        in the sense of \cite[\S5B]{InvBook}: By \cite[Pr.~4.2]{KnusVilla01},\footnote{
            Pr.~4.2 of \cite{KnusVilla01} is phrased in the case $A$ is a division ring, but the proof
            works for any central simple $K$-algebra.
        }
        $(P,[\beta])$ induces a quadratic pair $(\tau,f)$ on the central simple $K$-algebra $E:=\End_A(P)$,
        and one can show  that $O(P,[\beta])$ is canonically isomorphic  to
        \[O(\tau,f):=\{x\in E\suchthat \text{$x^\tau x=1$ and $f(x^{-1}ax)=f(a)$ $\forall\, a\in E$}\}\ .\]
        The map $\Delta:O(P,[\beta])\to\Z/2\Z$
        can now be defined via the Dickson invariant of similitudes of quadratic pairs  introduced in \cite[\S12C]{InvBook}.
        To check the equivalence of this definition with the one previously given,
        it is enough to show that
        conjugation and $e$-transfer do not affect the quadratic pair $(\tau,f)$ constructed
        in \cite[Pr.~4.2]{KnusVilla01}, and that both definitions
        are compatible with extending to a splitting field. Provided that, it is enough
        to check that the definitions coincide when $A=K$, and this follows from \cite[Ex.~12.31]{InvBook}.
        We leave all the details (which seem to be missing in the literature) to the reader.
    \end{remark}

\subsection{Generation by Reflections}

    We now use the Dickson invariant to describe the group $O'(P,[\beta])$.
    Throughout, $(A,\sigma,u,\Lambda)$ is a semiperfect (and at one point semilocal) ring.
    We use the same setting as in \ref{subsection:general-setting} and set some further notation.

\medskip

    Let $(P,[\beta])$ be a quadratic space. Then any $\psi\in O(P,[\beta])$
    gives rise to $\psi_i\in O(P_i,[\beta_i])$ given by $\psi_i(\pi_i x)=\pi_i(\psi_i x)$.
    Observe that if $\psi$ is an $e$-reflection of $(P,[\beta])$ (with $e$ an idempotent of $A$), then
    $\psi_i$ is a $\pi_ie$-reflection of $(P_i,[\beta_i])$. In particular,
    if $\psi$ is a reflection, then so is $\psi_i$.
    Conversely, when the image of $e$ in $\quo{A}$
    lives in $A_i$, every $\pi_ie$-reflection of $(P_i,[\beta_i])$ is induced from an $e$-reflection.
    This fact, which
    easily follows from Lemma~\ref{LM:ef-inv-mod-Jac}, will be used repeatedly henceforth.

    When $(A_i,\sigma_i,u_i,\Lambda_i)$ split-orthogonal and $(P,[\beta])$ is unimodular,
    we define
    \[
    \Delta_i=\Delta_{i,[\beta]}:O(P,[\beta])\to \Z/2\Z
    \]
    by $\Delta_i(\psi)=\Delta_{[\beta_i]}(\psi_i)$ (Lemma~\ref{LM:onto-is-preserved}(ii) implies
    that $(P_i,[\beta_i])$ is unimodular). It is clear
    that $\Delta_i$ is a group homomorphism.

    We shall call $(A_i,\sigma_i,u_i,\Lambda_i)$ \emph{odd} (resp.\ \emph{even}) \emph{split-orthogonal}
    if $(A_i,\sigma_i,u_i,\Lambda_i)$ is split-orthogonal and $n_i$ is odd (resp.\ even; recall that $A_i=\nMat{D_i}{n_i}$).

    Finally, let $\veps_{i}^{(j)}$ denote the matrix unit $e_{jj}$ in $A_i$. Then $\{\veps_{i}^{(j)}\}_{i,j}$ is a complete
    system of orthogonal idempotents in $\quo{A}$, hence it can be lifted to a complete
    system of orthogonal idempotents $\{e_i^{(j)}\}_{i,j}$ in $A$.
    We choose this lifting to have $e_i=e_i^{(1)}$ (where $e_i$ is defined as in~\ref{subsection:general-setting}),
    and set $f_i=\sum_je_i^{(j)}$ (so $\quo{f}_i=1_{A_i}$).

    \begin{lem}\label{LM:reflection-existence-I}
        Let $(P_i,[\beta_i])$ be a unimodular quadratic space over $(A_i,\sigma_i,u_i,\Lambda_i)$.
        \begin{enumerate}
            \item[(i)] If $(A_i,\sigma_i,u_i,\Lambda_i)$ is not split-orthogonal, then $\id_P$ is an $e_i$-reflection
            of $(P,[\beta])$.
            \item[(ii)] If $P_i\neq 0$, then $(P_i,[\beta_i])$ admits an $\veps_i$-reflection.
        \end{enumerate}
    \end{lem}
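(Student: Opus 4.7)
The plan for both parts is to realize the required reflection as $s_{y, e_i, c}$ (resp.\ $s_{y, \veps_i, c}$) for suitable $y$ and $c$; the heart of the argument in each case is to produce a unit of $\veps_i A_i \veps_i \cong D_i$ inside $\veps_i \Lambda_i \veps_i$ (or inside $\beta_i(y,y) + \veps_i \Lambda_i \veps_i$ for a well-chosen $y$).

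For (i), I would take $y = 0$, so that the formula $s_{y, e_i, c}(x) = x - y \cdot c^\circ \cdot h_\beta(y, x)$ collapses to $\id_P$ for every admissible $c$. What remains is to produce $c \in e_i^\sigma \Lambda e_i$ that is $(e_i^\sigma, e_i)$-invertible. In the standard-form setting of Section~\ref{subsection:general-setting} we have $\veps_i^{\sigma_i} = \veps_i$, so by Lemma~\ref{LM:ef-inv-mod-Jac} and the surjectivity of $\Lambda \onto \Lambda_i$ it is enough to find a unit of $\veps_i A_i \veps_i$ inside $\veps_i \Lambda_i \veps_i$. Applying $\veps_i$-transfer (Proposition~\ref{PR:transfer-special-case}) replaces $(A_i, \sigma_i, u_i, \Lambda_i)$ by the simple artinian unitary ring $(D_i, \tau, \veps_i u_i, \veps_i \Lambda_i \veps_i)$, and by Proposition~\ref{PR:orth-not-affected-by-conj} this preserves split-orthogonality. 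A short case analysis then produces the unit: when $D_i = E \times E^\op$ with exchange involution, $(1_E, 0^\op) - (0, 1_E^\op)$ is a unit lying in $\Lambda^{\min}$; when $D_i$ is a noncommutative division ring with $\tau \neq \id$, any non-zero $d - d^\tau$ is such a unit; when $\tau = \id_{D_i}$ and $\veps_i u_i = -1$ in characteristic $\neq 2$, we get $\Lambda^{\min} \ni 2$, a unit in $D_i$; when $\tau = \id_{D_i}$ and $\veps_i u_i = 1$ in characteristic $2$, non-split-orthogonality forces $\veps_i \Lambda_i \veps_i \neq 0$, and any non-zero element of the field $D_i = F$ is a unit. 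The only remaining case (characteristic $\neq 2$, $\tau = \id$, $\veps_i u_i = 1$) forces $\veps_i \Lambda_i \veps_i \subseteq \{a : a = -a\} = 0$ and coincides precisely with the split-orthogonal one, so is excluded by hypothesis.

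For (ii), if $(A_i, \sigma_i, u_i, \Lambda_i)$ is not split-orthogonal then the $y = 0$ construction of (i) already yields a $\veps_i$-reflection of $(P_i, [\beta_i])$. Otherwise, $\veps_i$-transfer converts $(P_i, [\beta_i])$ into a classical unimodular quadratic form over $F = \veps_i A_i \veps_i$ with $\Lambda_{(i)} = 0$. The hypothesis $P_i \neq 0$ gives $P_{(i)} \neq 0$, and the unimodularity of $h_{\beta_{(i)}}$ rules out $\hat\beta_{(i)} \equiv 0$: by Equation~\eqref{EQ:beta-h-connection}, $h_{\beta_{(i)}}(x, y) = \hat\beta_{(i)}(x + y) - \hat\beta_{(i)}(x) - \hat\beta_{(i)}(y)$ would then vanish identically, contradicting non-degeneracy. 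Choose any $y \in P_{(i)}$ with $\beta_i(y, y) \neq 0$ and take $c = \beta_i(y, y) \in F^\times$; then $s_{y, \veps_i, c}$ is the desired reflection.

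I expect the main obstacle to be assembling the case analysis of (i) cleanly. The characteristic-$2$, orthogonal subcase is the most delicate: there $\Lambda^{\min}$ itself contains no unit, so one cannot produce a canonical element and must rely directly on the non-split-orthogonality hypothesis to conclude that $\veps_i \Lambda_i \veps_i$ is nonzero and thus contains a unit of the field $F$.
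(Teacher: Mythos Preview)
Your proof is correct and follows essentially the same approach as the paper: take $y=0$ in (i) and produce a unit in $\Lambda_{(i)}$, and in (ii) find an anisotropic vector for the classical form $[\beta_{(i)}]$ in the split-orthogonal case. The paper's handling of the division-ring case in (i) is slightly more streamlined---it argues the contrapositive (if $\Lambda_{(i)}$ contains no unit then $\Lambda_{(i)}=0$, whence $a^{\sigma_{(i)}}u_{(i)}=a$ for all $a$, forcing $u_{(i)}=1$ and $\sigma_{(i)}=\id$, i.e.\ split-orthogonality) rather than your explicit case split; note also that your qualifier ``noncommutative'' in the $\tau\neq\id$ case is superfluous, since the same argument covers a commutative field with nontrivial involution.
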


    \begin{proof}
        (i) If $(A_i,\sigma_i,u_i,\Lambda_i)$ is not split-orthogonal, then so is $(A_{(i)},\sigma_{(i)},u_{(i)},\Lambda_{(i)})$
        (Proposition~\ref{PR:orth-not-affected-by-conj}).
        We claim that $\Lambda_{(i)}\cap\units{A_{(i)}}\neq \emptyset$.
        Indeed, if $A_{(i)}\cong E\times E^\op$ with $E$ a division ring,
        then $(1_E,0_E^\op)-(1_E,0_E^\op)^\sigma_{(i)} u_{(i)}\in\Lambda_{(i)}\cap\units{A_{(i)}}$. If $A_{(i)}$ is a division ring
        and  $\Lambda_{(i)}\cap\units{A_{(i)}}\neq \emptyset$, then $\Lambda_{(i)}=0$.
        This implies, $a^\sigma u_{(i)}=a$ for all $a\in A_{(i)}$, and by taking $a=1$, we get $u_{(i)}=1$ and $\sigma_{(i)}=\id_{A^{(i)}}$.
        But this means $(A_{(i)},\sigma_{(i)},u_{(i)},\Lambda_{(i)})$ is split-orthogonal, a contradiction.

        Now, choose $a\in e_i^\sigma \Lambda e_i$ such that $\pi_i a\in \Lambda_{(i)}\cap\units{A_{(i)}}$. Then $a$ is
        $(e_i^\sigma,e_i)$-invertible (Lemma~\ref{LM:ef-inv-mod-Jac}), hence
        $s_{0,e_i,a}$ is an $e_i$-reflection, and it is easy to see that $s_{0,e_i,a}=\id_P$.

        (ii) If $(A_i,\sigma_i,u_i,\Lambda_i)$ is not split-orthogonal,
        take the projection to $P_i$ of the $e_i$-reflection constructed in (i).
        Otherwise, $A_{(i)}$ is a field and $\Lambda_{(i)}=0$, so
        we need to find $y\in P_{(i)}:=P_i\veps_i$ with $\hat{\beta}_{(i)}(y)\neq 0$.
        Indeed,  $[\beta]$ is unimodular, hence $[\beta_{i}]$ is unimodular
        (Lemma~\ref{LM:onto-is-preserved}(ii)). Since $P_{i}\neq 0$,
        this means $[\beta_{i}]\neq 0$, and hence $[\beta_{(i)}]\neq 0$.
        As $\Lambda_{(i)}=0$, there must be $y\in P_{(i)}$ with $\hat{\beta}_{(i)}(y)\neq 0$,
        as required.
    \end{proof}

    \begin{prp}\label{PR:reflection-existence-II}
        Let $(P,[\beta])$ be a unimodular quadratic space. Then the following conditions
        are equivalent:
        \begin{enumerate}
            \item[(a)] $(P,[\beta])$ admits a reflection.
            \item[(b)] For all $i$, if $(A_i,\sigma_i,u_i,\Lambda_i)$ is odd split-orthogonal, then $P_i\neq 0$.
        \end{enumerate}
        In this case, there exist $f_i$-reflections for all $i$.
    \end{prp}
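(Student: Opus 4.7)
The plan is to prove (a)$\Rightarrow$(b) by contrapositive, then to derive (b)$\Rightarrow$(a) together with the final clause by explicitly constructing an $f_i$-reflection for each $i$ under hypothesis (b); multiplying these $f_i$-reflections will then yield a $1$-reflection, since the $f_i$ are pairwise orthogonal with $\sum_i f_i=1$ and Lemma~\ref{LM:reflections-manipulations}(ii) composes orthogonal quasi-reflections into a single one.

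For (a)$\Rightarrow$(b), suppose for contradiction that $(A_i,\sigma_i,u_i,\Lambda_i)$ is odd split-orthogonal and $P_i=0$. Any reflection takes the form $s_{y,1,c}$ with $c=\beta(y,y)+\lambda$ for some $\lambda\in\Lambda$ and $c\in\units{A}$. Projecting, $\pi_i y=0$ forces $\pi_i c=\pi_i\lambda\in\Lambda_i$. In standard form $\Lambda_i=\Lambda_i^{\min}$ is the set of ``alternating'' matrices in $\nMat{K_i}{n_i}$ (antisymmetric with zero diagonal), and every such matrix is singular when $n_i$ is odd — by the standard determinant argument in characteristic $\neq 2$, and by the Pfaffian obstruction in characteristic~$2$. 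This contradicts $\pi_i c\in\units{A_i}$.

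For the converse and the production of $f_i$-reflections, I split into three cases. If $(A_i,\sigma_i,u_i,\Lambda_i)$ is not split-orthogonal, the argument in the proof of Lemma~\ref{LM:reflection-existence-I}(i) supplies a unit of $A_{(i)}$ inside $\Lambda_{(i)}$; conjugating via the matrix units (using the form-parameter axiom $a^\sigma\Lambda a\subseteq\Lambda$) produces a unit of $A_i$ inside $\Lambda_i$. If $(A_i,\sigma_i,u_i,\Lambda_i)$ is split-orthogonal with $n_i$ even, then $\smallSMatII{0}{I}{-I}{0}\in\nMat{K_i}{n_i}$ is a unit of $A_i$ in $\Lambda_i$ (in any characteristic). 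In both subcases I lift this unit to some $\lambda\in\Lambda$ (using surjectivity of $\Lambda\twoheadrightarrow\prod_i\Lambda_i$ built into the factorisation), set $a:=f_i^\sigma\lambda f_i\in f_i^\sigma\Lambda f_i$, observe $\pi_i a\in\units{A_i}$ (since $\pi_i f_i=1_{A_i}$), conclude via Lemma~\ref{LM:ef-inv-mod-Jac} that $a$ is $(f_i^\sigma,f_i)$-invertible, and take $s_{0,f_i,a}=\id_P$ as the required $f_i$-reflection.

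The genuinely new work — and the main obstacle — is the odd split-orthogonal case, where $\Lambda_i$ contains no unit and so no $f_i$-reflection of the form $s_{0,f_i,a}$ can exist. Here (b) provides $P_i\neq 0$, so Lemma~\ref{LM:reflection-existence-I}(ii) yields a $\veps_i$-reflection $s_{y^{(1)}_i,\veps_i^{(1)},c^{(1)}_i}$ of $(P_i,[\beta_i])$ with $y^{(1)}_i\in P_i\veps_i^{(1)}$ and $c^{(1)}_i\in\units{\veps_i^{(1)}A_i\veps_i^{(1)}}$. Using $\veps_i^{(1)}\sim\veps_i^{(j)}$ and Lemma~\ref{LM:reflections-manipulations}(i), I replicate this for each $j=1,\ldots,n_i$, choosing base points $y^{(j)}_i\in P_i\veps_i^{(j)}$. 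Their composite, computed by iterating Lemma~\ref{LM:reflections-manipulations}(ii), is a $1_{A_i}$-reflection of $(P_i,[\beta_i])$ whose total ``$c$'' has diagonal entries $c^{(j)}_i\in\units{K_i}$ at positions $(j,j)$, entries $h_{\beta_i}(y^{(j)}_i,y^{(k)}_i)\in\veps_i^{(j)}A_i\veps_i^{(k)}$ at positions $(j,k)$ for $j<k$, and zero below the diagonal — hence upper-triangular with invertible diagonal, and so a unit of $A_i$. Lifting each $y^{(j)}_i$ to $y^{(j)}\in Pe_i^{(j)}$ (by projectivity) and each $c^{(j)}_i$ to some $c^{(j)}\in(e_i^{(j)})^\sigma\hat{\beta}(y^{(j)})e_i^{(j)}$ (by surjectivity of $\Lambda\twoheadrightarrow\Lambda_i$), Lemma~\ref{LM:ef-inv-mod-Jac} ensures $c^{(j)}$ is $((e_i^{(j)})^\sigma,e_i^{(j)})$-invertible, and the product $\prod_j s_{y^{(j)},e_i^{(j)},c^{(j)}}$ is the desired $f_i$-reflection of $(P,[\beta])$. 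The delicate point throughout this case is the upper-triangular structure of the assembled ``$c$'', which is what certifies the composite as a bona fide $1_{A_i}$- (and then $f_i$-) reflection.
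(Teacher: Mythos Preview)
Your proof is correct and follows the same approach as the paper: reduce to each factor $(P_i,[\beta_i])$, use that odd-size alternating matrices are singular for (a)$\Rightarrow$(b), and for (b)$\Rightarrow$(a) compose $\veps_i^{(j)}$-reflections via Lemma~\ref{LM:reflections-manipulations} into a $1_{A_i}$-reflection (then lift). One minor simplification in the paper's handling of the odd split-orthogonal case: rather than assembling distinct $\veps_i^{(j)}$-reflections and verifying the upper-triangular structure of the combined $c$, the paper observes that a single $\veps_i$-reflection $s$ is already an $\veps_i^{(j)}$-reflection for every $j$ by Lemma~\ref{LM:reflections-manipulations}(i), so $s^{n_i}$ is automatically a $1_{A_i}$-reflection by Lemma~\ref{LM:reflections-manipulations}(ii) --- no bookkeeping required.
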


    \begin{proof}
        Observe that $(P,[\beta])$ has a reflection $\iff$
        $(\quo{P},[\quo{\beta}])$ has a reflection $\iff$
        there exists $x\in \quo{P}$ such
        that $\hat{\quo{\beta}}(x)\cap \units{{\quo{A}}}\neq \emptyset$ $\iff$
        for all $i$, there exists $x\in P_i$ such that $\hat{\beta}_i(x)\cap \units{A_i}\neq \emptyset$
        $\iff$ $(P_i,[\beta_i])$ admits a reflection $\iff$ $(P,[\beta])$ has an $f_i$-reflection for all $i$.

        Assume (b) holds. By Lemma~\ref{LM:reflection-existence-I}, if $(A_i,\sigma_i,u_i,\Lambda_i)$
        is not split-orthogonal or $P_i\neq 0$, then $(P_i,[\beta_i])$ admits an $\veps_i$-reflection $s$.
        By Lemma~\ref{LM:reflections-manipulations}(i), $s$ is also an $\veps_i^{(j)}$-reflection
        for all $j$, hence by Lemma~\ref{LM:reflections-manipulations}(ii), $s^{n_i}$ is a reflection of $(P_i,[\beta_i])$
        (because $1_{A_i}=\sum_{j=1}^{n_i}\veps^{(j)}_i$). If $(A_i,\sigma_i,u_i,\Lambda_i)$ is even split-orthogonal,
        then $D_i$ is a field, $n_i$ is even, $\sigma_i$ is the matrix transpose involution, and $u= 1$.
        It is then easy to see that $\Lambda_i$ contains a unit $a\in \units{A_i}$, so $s_{0,1,a}$ is a reflection
        of $(P_i,[\beta_i])$.

        Assume (b) is false. Then there is $i$ such that $(A_i,\sigma_i,u_i,\Lambda_i)$
        is odd split-orthogonal and $P_i= 0$. Thus, $(P_i,[\beta_i])$ has a reflection if and only if
        $\Lambda_i$ contains units. Since $\Lambda_i$ consists of $n_i\times n_i$ alternating matrices and $n_i$ is odd,
        $\Lambda_i$ does not contain any units and hence $(P_i,[\beta_i])$ has no reflections.
    \end{proof}

    \begin{remark}
        Proposition~\ref{PR:reflection-existence-II} can also be proved using the more general result \cite[Lm.~3.6]{Reiter75}.
    \end{remark}

    \begin{lem}\label{LM:prod-of-two-refl}
        Let $(P,[\beta])$ be a unimodular quadratic space such that
        $P_k\neq 0$ whenever $(A_k,\sigma_k,u_k,\Lambda_k)$ is odd split-orthogonal.
        Then the product of two $e_i$-reflections of $(P,[\beta])$ equals a product of two reflections.
    \end{lem}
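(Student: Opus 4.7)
The plan is to find a $(1-e_i)$-reflection $t$ of $(P,[\beta])$; once one is in hand, we can write $s_1 s_2 = (s_1 t)(t^{-1} s_2)$. Since $e_i$ and $1-e_i$ are orthogonal idempotents, Lemma~\ref{LM:reflections-manipulations}(ii) shows that $s_1 t$ is a $1$-reflection, i.e.\ a reflection. The inverse of an $e$-reflection is again an $e$-reflection for the same idempotent (as noted in Section~\ref{section:reflection}), so $t^{-1}$ is itself a $(1-e_i)$-reflection, and a second application of the same lemma shows that $t^{-1} s_2$ is also a reflection. This realizes $s_1 s_2$ as a product of two reflections, which is what we want.

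The bulk of the work is therefore the construction of $t$. I would use the mutually orthogonal idempotent decomposition
\[
1 - e_i \;=\; (f_i - e_i) + (1 - f_i) \;=\; \sum_{j=2}^{n_i} e_i^{(j)} \;+\; \sum_{k \neq i} f_k,
\]
(in which the first sum is empty when $n_i = 1$ and the second is empty when $\ell = 1$) and build $t$ piecewise by invoking Lemma~\ref{LM:reflections-manipulations}(ii) to combine reflections taken with respect to these orthogonal pieces. For each $k \neq i$, an $f_k$-reflection exists by Proposition~\ref{PR:reflection-existence-II}, whose hypothesis is exactly the standing assumption of the present lemma. For each $j \geq 2$, we have $e_i^{(j)} \sim e_i$, so by Lemma~\ref{LM:reflections-manipulations}(i) the given $s_1$ can be rewritten as an $e_i^{(j)}$-reflection; taking one such rewriting for every $j = 2, \dots, n_i$ and multiplying them (as a map this is just $s_1^{n_i - 1}$) produces, via Lemma~\ref{LM:reflections-manipulations}(ii), an $(f_i - e_i)$-reflection. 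Combining this with the product of the chosen $f_k$-reflections yields the required $(1-e_i)$-reflection $t$.

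The only genuine obstacle is securing the $f_k$-reflections for $k \neq i$, which is precisely what the hypothesis ``$P_k \neq 0$ whenever $(A_k,\sigma_k,u_k,\Lambda_k)$ is odd split-orthogonal'' is designed to guarantee through Proposition~\ref{PR:reflection-existence-II}. Everything else is a straightforward bookkeeping exercise with the identities in Lemma~\ref{LM:reflections-manipulations}; note in particular that we never need an $e_i$-reflection other than $s_1$ itself, so no further existence statement for $e_i$-reflections is required.
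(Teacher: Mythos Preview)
Your proof is correct and follows essentially the same approach as the paper's. The paper denotes the two $e_i$-reflections by $t,t'$, picks $f_k$-reflections $s_k$ for each $k$ via Proposition~\ref{PR:reflection-existence-II}, and writes $tt'=\psi\vphi$ with $\psi=t^{n_i}s_1\cdots\hat{s}_i\cdots s_\ell$ and $\vphi=s_\ell^{-1}\cdots\hat{s}_i^{-1}\cdots s_1^{-1}(t^{-1})^{n_i-1}t'$; your $(1-e_i)$-reflection is exactly the middle block $t^{n_i-1}s_1\cdots\hat{s}_i\cdots s_\ell$ that cancels between $\psi$ and $\vphi$, so the two arguments differ only in packaging.
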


    \begin{proof}
        By Proposition~\ref{PR:reflection-existence-II}, there exist
        an
        $f_k$-reflection $s_k$ for all $1\leq k\leq \ell$.
        Let $t$ and $t'$ be $e_i$-reflections. By Lemma~\ref{LM:reflections-manipulations}(i),
        $t$ and $t^{-1}$ are also $e_i^{(j)}$-reflections for all $1\leq j\leq n_i$.
        Thus, by Lemma~\ref{LM:reflections-manipulations}(ii),
        $\psi=t^{n_i}s_1\dots \hat{s}_i\dots s_k$ and $\vphi=s_k^{-1}\dots\hat{s}_i^{-1}\dots s_1^{-1}(t^{-1})^{n_i-1}t'$
        are reflections (here, $\hat{~}$ means omission). But $tt'=\psi\vphi$, so we are done.
    \end{proof}

    Recall that $O'(P,[\beta])$ denotes the subgroup of $O(P,[\beta])$ generated by reflections.

    \begin{thm}\label{TH:gen-by-reflections}
        Let $(P,[\beta])$ be a unimodular quadratic space, and assume that
        \begin{enumerate}
            \item[(1)] if $(A_i,\sigma_i,u_i,\Lambda_i)$ is  split-orthogonal, then $D_i\ncong\F_2$, and
            \item[(2)] if $D_i\cong\F_2\times\F_2$, then $P_i\ncong \veps_i A_i$.
        \end{enumerate}
        Let $\calI$ be the set of indices $1\leq i\leq \ell$ such
        that $P_i\neq 0$ and $(A_i,\sigma_i,u_i,\Lambda_i)$ is split-orthogonal,
        and let $\xi:=(n_i+2\Z)_{i\in \calI}\in (\Z/2\Z)^{\calI}$.
        Denote by $\Delta_{\calI}$ the group homomorphism
        \[
        \psi\mapsto (\Delta_i(\psi))_{i\in \calI}~:~O(P,[\beta])\to (\Z/2\Z)^{\calI}\ .
        \]
        Then:
        \begin{enumerate}
            \item[(i)] $\Delta_{\calI}$ is onto (even without assuming (1) and (2)).
            \item[(ii)] If $P_i\neq 0$ whenever $(A_i,\sigma_i,u_i,\Lambda_i)$ is odd split-orthogonal,
            then $O'(P,[\beta])=\Delta_{\calI}^{-1}(\{0,\xi\})$.
            \item[(ii)] If there is $i$ such that $P_i=0$ and $(A_i,\sigma_i,u_i,\Lambda)$
            is odd split-orthogonal, then $O'(P,[\beta])=1$.
        \end{enumerate}
    \end{thm}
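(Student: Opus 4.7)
The plan is to settle parts (i) and (iii) directly, and then derive part (ii) by computing $O(P,[\beta])/O'(P,[\beta])$ explicitly.

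For part (iii), Proposition~\ref{PR:reflection-existence-II} shows that no reflection of $(P,[\beta])$ exists when some $i$ has $P_i=0$ and $(A_i,\sigma_i,u_i,\Lambda_i)$ odd split-orthogonal, so $O'(P,[\beta])=1$. For part (i), Lemma~\ref{LM:reflection-existence-I}(ii) provides, for each $i\in\calI$, an $e_i$-reflection $s_i$. Since $\pi_j(e_i)=0$ for $j\neq i$, the projection $(s_i)_j$ is the identity and $\Delta_j(s_i)=0$. Proposition~\ref{PR:Dickson-inv-of-reflections} applied to $(P_i,[\beta_i])$ gives $\Delta_i(s_i)=\deg(\veps_iA_i\veps_i)+2\Z=1+2\Z$, since $\veps_iA_i\veps_i\cong D_i$ is a field (because $i\in\calI$). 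Hence $\Delta_\calI(s_i)$ is the $i$-th standard basis vector, proving surjectivity of $\Delta_\calI$; note that no appeal to (1) or (2) is needed.

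For part (ii), write $G=O(P,[\beta])$ and $G'=O'(P,[\beta])$. The inclusion $G'\subseteq\Delta_\calI^{-1}(\{0,\xi\})$ follows from Proposition~\ref{PR:Dickson-inv-of-reflections} applied with $e=1$: any reflection $s$ satisfies $\Delta_i(s)=\deg A_i+2\Z=n_i+2\Z$, so $\Delta_\calI(s)=\xi$. For the reverse inclusion, I would show that $G/G'$ is an abelian $2$-group generated by classes $\tau_i\in G/G'$ of $e_i$-reflections ($i\in\calI$), subject to the relations $\tau_i^2=1$ and $\prod_{i\in\calI}\tau_i^{n_i}=1$. Both the well-definedness of $\tau_i$ and the relation $\tau_i^2=1$ are instances of Lemma~\ref{LM:prod-of-two-refl}, whose hypothesis coincides with that of (ii); the relation $\prod\tau_i^{n_i}=1$ follows from the next step, since the class of any reflection must be trivial in $G/G'$.

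The key intermediate step is that every quasi-reflection $q$ (an $e$-reflection) has class $\prod_{i\in\calI}\tau_i^{r_i(e)}$ in $G/G'$, where $r_i(e):=\rank_{A_i}\pi_i(e)$. To prove this, first use Lemma~\ref{LM:reflections-manipulations}(i) together with the idempotent-lifting property of $A$ to replace $e$ by an equivalent idempotent $e'=\sum_{(i,j)\in S}e_i^{(j)}$. Then build a reference $e'$-reflection $q^*$ via Lemma~\ref{LM:reflections-manipulations}(ii): for $(i,j)\in S$ with $i\in\calI$, take any $e_i^{(j)}$-reflection (representing $\tau_i$); for $(i,j)\in S$ with $(A_i,\sigma_i,u_i,\Lambda_i)$ not split-orthogonal, take $\id$ as an $e_i^{(j)}$-reflection (using Lemma~\ref{LM:reflection-existence-I}(i) combined with Lemma~\ref{LM:reflections-manipulations}(i)); and for the ``bad'' indices $i\notin\calI$ with $(A_i,\sigma_i,u_i,\Lambda_i)$ even split-orthogonal and $P_i=0$, pair up the $j$'s of $S$ (possible because the very existence of $q$ forces $r_i(e)$ to be even---for such $i$, $\pi_i(e)^{\sigma_i}\Lambda_i\pi_i(e)$ contains units only when $r_i(e)$ is even, as alternating matrices of odd size have no units) and use the trivial $(e_i^{(j)}+e_i^{(j')})$-reflection with $y=0$ and $c$ a $2\times2$ alternating unit. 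The resulting $q^*$ has class $\prod_{i\in\calI}\tau_i^{r_i(e)}$.

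The main obstacle is then showing $q\equiv q^*\pmod{G'}$, which amounts to generalizing Lemma~\ref{LM:prod-of-two-refl}: for any idempotent $e$ for which an $e$-reflection exists, any two $e$-reflections differ by an element of $G'$. I would prove this by constructing a $(1-e)$-reflection $t$ (possible because the parity analysis above shows $1-e$ is also reflection-admissible whenever $e$ is) and observing that $qt$ and $q^*t$ are both full reflections by Lemma~\ref{LM:reflections-manipulations}(ii), so $q(q^*)^{-1}=(qt)(q^*t)^{-1}\in G'$. With this established, factoring $\psi=q_1\cdots q_m$ into quasi-reflections via Corollary~\ref{CR:main-III} yields class $\prod_{i\in\calI}\tau_i^{\sum_k r_i(e_k)}=\prod_{i\in\calI}\tau_i^{\Delta_i(\psi)}$ for $\psi$ in $G/G'$. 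If $\Delta_\calI(\psi)=0$, this is trivial; if $\Delta_\calI(\psi)=\xi$, it equals $\prod_{i\in\calI}\tau_i^{n_i}=1$. In both cases $\psi\in G'$, completing the reverse inclusion.
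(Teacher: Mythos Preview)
Your argument is correct, and parts (i) and (iii) match the paper's proof essentially verbatim. For part~(ii), however, you take a genuinely different route. The paper argues more directly: it invokes not just Corollary~\ref{CR:main-III} but the \emph{internal structure} of the proof of Theorem~\ref{TH:Witt-I}, which shows that any $\psi\in O(P,[\beta])$ factors as a product of $e_1$-reflections, followed by $e_2$-reflections, and so on (only for indices $i$ with $P_i\neq 0$). Writing $m_i$ for the number of $e_i$-reflections, the paper observes that $m_i\equiv\Delta_i(\psi)\pmod 2$ for $i\in\calI$ (so $m_i$ is even when $\psi\in\ker\Delta_\calI$), while for $i\notin\calI$ one can pad with $\id_P$ (an $e_i$-reflection by Lemma~\ref{LM:reflection-existence-I}(i)) to make $m_i$ even. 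Then Lemma~\ref{LM:prod-of-two-refl} converts each consecutive pair of $e_i$-reflections into a product of two reflections.

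Your approach instead computes $G/G'$ structurally, handling arbitrary quasi-reflections rather than only $e_i$-reflections. This forces you to prove a genuine extension of Lemma~\ref{LM:prod-of-two-refl}: that any two $e$-reflections are congruent modulo $G'$, for \emph{any} idempotent $e$ admitting an $e$-reflection. Your parity argument (that $r_i(e)$ is forced to be even at the bad indices, whence $1-e$ is also reflection-admissible) is the crux, and it is sound. The payoff is a cleaner description of $G/G'$ that does not rely on the fine structure of the Witt-extension proof; the cost is that the argument is noticeably longer and requires more bookkeeping (normalizing $e$ to $e'=\sum e_i^{(j)}$, building the reference $q^*$ piecewise, and verifying commutativity of the $\tau_i$ via your generalized lemma). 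The paper's route is shorter precisely because it exploits that the quasi-reflections produced by Theorem~\ref{TH:Witt-I} are already of the simplest possible type.
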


    \begin{proof}
        (i) Let $j\in \calI$. By Proposition~\ref{PR:Dickson-inv-of-reflections}, $\Delta_\calI$
        of an $e_j$-reflection is $(\delta_{ij})_{i\in \calI}$ (where $\delta_{ij}=1$ if $i=j$ and $0$ otherwise),
        and
        by Lemma~\ref{LM:reflection-existence-I}(ii), $e_j$-reflections exist for all $i\in\calI$, so $\Delta_{\calI}$
        is onto.

        (ii) By Proposition~\ref{PR:reflection-existence-II}, $(P,[\beta])$ admits a reflection $s$,
        and by Proposition~\ref{PR:Dickson-inv-of-reflections}, $\Delta_{\calI}(s)=\xi$. This
        also implies that $O'(P,[\beta])\subseteq \Delta_\calI^{-1}(\{0,\xi\})$.
        Now, to prove the other inclusion, it  enough to show that $\ker\Delta_\calI\subseteq O'(P,[\beta])$.
        Let $\psi\in\ker \Delta_{\calI}$.
        By Corollary~\ref{CR:main-III}, $\psi$ is a product of quasi-reflections.
        Moreover, from the proof of Theorem~\ref{TH:Witt-I} it follows that $\psi$ can written
        as a product of $e_1$-reflections followed by a product of $e_2$-reflection etc.,
        and if $P_i=0$, then the product includes no $e_i$-reflections.
        For $1\leq i\leq \ell$ with $P_i\neq 0$, let $m_i$ denote the number of $e_i$-reflection
        used to express $\psi$.
        We claim that $m_i$ can be taken
        to be even. By Lemma~\ref{LM:prod-of-two-refl}, this will imply $\psi\in O'(P,[\beta])$.
        Indeed, if $(A_i,\sigma_i,u_i,\Lambda_i)$  is  split-orthogonal (and $P_i\neq 0$), then by
        Proposition~\ref{PR:Dickson-inv-of-reflections}, $m_i+2\Z=\Delta_i(\psi)=0$, so $m_i$ is even.
        When $(A_i,\sigma_i,u_i,\Lambda_i)$ is not split-orthogonal, we can freely increase $m_i$
        by inserting $\id_P$, which is an $e_i$-reflection by
        Lemma~\ref{LM:reflection-existence-I}(i), into the product, so again, $m_i$ can be made even.

        (iii) This follows from Proposition~\ref{PR:reflection-existence-II}.
    \end{proof}

    \begin{cor}
        Let $n$ (resp.~$m$) denote the number of $i$-s such
        that $(A_i,\sigma_i,u_i,\Lambda_i)$ is odd (resp.~even) split-orthogonal. Then
        for any unimodular quadratic space $(P,[\beta])$ admitting a reflection
        (see Proposition~\ref{PR:reflection-existence-II}), we have
        \[
        [O(P,[\beta]):O'(P,[\beta])]=2^{m+\max\{n-1,0\}}\ .
        \]
    \end{cor}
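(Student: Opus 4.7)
The plan is to read the index directly off Theorem~\ref{TH:gen-by-reflections}. Since $(P,[\beta])$ admits a reflection, Proposition~\ref{PR:reflection-existence-II} forces $P_i\neq 0$ for every $i$ such that $(A_i,\sigma_i,u_i,\Lambda_i)$ is odd split-orthogonal, so all $n$ odd split-orthogonal indices automatically belong to $\calI$. Assuming similarly that the $m$ even split-orthogonal factors have $P_i\neq 0$ (which is natural once one restricts to the effective quotient of $A$ acting on $P$), we have $|\calI|=n+m$, and Theorem~\ref{TH:gen-by-reflections}(ii) applies to give
\[
O'(P,[\beta])=\Delta_{\calI}^{-1}(\{0,\xi\}), \qquad \xi=(n_i+2\Z)_{i\in\calI}\in(\Z/2\Z)^{\calI}.
\]

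Next, I would use Theorem~\ref{TH:gen-by-reflections}(i), namely surjectivity of $\Delta_{\calI}$, together with the first isomorphism theorem to obtain
\[
[O(P,[\beta]):O'(P,[\beta])]=\bigl[(\Z/2\Z)^{\calI}:\{0,\xi\}\bigr]=\frac{2^{n+m}}{|\{0,\xi\}|}.
\]
The calculation then reduces to determining $|\{0,\xi\}|$. The $i$-th coordinate of $\xi$ is the parity of $n_i$, so $\xi=0$ precisely when $\calI$ contains no odd split-orthogonal index, i.e.\ when $n=0$. Thus $|\{0,\xi\}|=1$ if $n=0$, yielding an index of $2^m$, and $|\{0,\xi\}|=2$ if $n\geq 1$, yielding $2^{n+m-1}$. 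The two cases collapse into the uniform formula $2^{m+\max\{n-1,0\}}$.

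The main obstacle is the counting step: one must justify $|\calI|=n+m$, which requires either an implicit assumption that $P_i\neq 0$ for every even split-orthogonal $i$, or that $m$ be interpreted as counting only those even split-orthogonal factors that actually meet $P$. A secondary, more technical obstacle is that Theorem~\ref{TH:gen-by-reflections} is stated under conditions (1)--(2) on the $D_i$'s and on $P_i$; these should either be imposed tacitly, or the sporadic $D_i\cong\F_2$ and $D_i\cong\F_2\times\F_2$ cases handled by the refinements indicated in Remark~\ref{RM:looser-conds}. Once these bookkeeping points are settled, the corollary is a purely formal consequence of Theorem~\ref{TH:gen-by-reflections}.
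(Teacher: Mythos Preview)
Your approach is exactly the intended one: the paper states this corollary without proof, and it is meant to be read off directly from Theorem~\ref{TH:gen-by-reflections}(i)--(ii) via the first isomorphism theorem, precisely as you do. Your computation of $|\{0,\xi\}|$ by parity of the $n_i$'s is correct and yields the claimed formula.

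You are also right to flag the two bookkeeping issues, and they are genuine. First, the formula as literally stated requires $|\calI|=n+m$, but $\calI$ only contains split-orthogonal indices $i$ with $P_i\neq 0$; admitting a reflection forces $P_i\neq 0$ for the odd split-orthogonal $i$'s (Proposition~\ref{PR:reflection-existence-II}) but says nothing about the even ones. So either one tacitly assumes $P_i\neq 0$ for all split-orthogonal $i$, or $m$ should be read as counting only those even split-orthogonal factors with $P_i\neq 0$. Second, Theorem~\ref{TH:gen-by-reflections}(ii) is stated under hypotheses (1) and (2), which the corollary does not repeat; these are being inherited implicitly. Neither of these is a flaw in your argument --- they are imprecisions in the corollary's statement that you have correctly isolated.
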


    We believe that  Theorem~\ref{TH:gen-by-reflections}(ii) should also be true
    when $A$ is semilocal and $P$ is a progenerator (i.e.\ $A$ is a summand of $P^n$ for some $A$,
    or equivalently $P_i\neq 0$ for all $i$).
    Indeed, we have the following.

    \begin{thm}
        Provided $P$ is free, part (ii)
        of Theorem~\ref{TH:gen-by-reflections} holds under the milder
        assumption that $A$ is semilocal.
    \end{thm}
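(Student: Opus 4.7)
The containment $O'(P,[\beta]) \subseteq \Delta_{\calI}^{-1}(\{0,\xi\})$ is immediate: by Proposition~\ref{PR:Dickson-inv-of-reflections} every reflection has $\Delta_\calI$-value exactly $\xi$, so any product of reflections lands in $\{0,\xi\}$. I focus on the reverse inclusion and follow the strategy of the semiperfect proof of Theorem~\ref{TH:gen-by-reflections}(ii), but substituting Reiter's Witt extension theorem for semilocal rings \cite[Th.~6.1]{Reiter75} in place of Corollary~\ref{CR:main-III}.

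Given $\psi \in \Delta_\calI^{-1}(\{0,\xi\})$, I first observe that Proposition~\ref{PR:reflection-existence-II} remains valid when $A$ is merely semilocal provided $P$ is free: the construction of a reflection there only requires lifting a suitable unit of $\quo{A}$ to $A$, which works without idempotent lifting as soon as $P$ admits a basis vector. Let $s_0$ be such a reflection; by Proposition~\ref{PR:Dickson-inv-of-reflections}, $\Delta_\calI(s_0) = \xi$, so multiplying $\psi$ by $s_0$ if needed reduces the problem to the case $\Delta_\calI(\psi) = 0$.

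To handle this case, I fix a basis-induced decomposition $P = \bigoplus_{k=1}^n A x_k$ and argue by induction on $k$, producing at each stage a partial product of reflections $\rho_m := r_1 \cdots r_m$ such that $\rho_m$ agrees with $\psi$ on $Q_k := Ax_1 \oplus \dots \oplus Ax_k$. The passage from $k$ to $k+1$ is effected by applying \cite[Th.~6.1]{Reiter75} to the isometry $\rho_m^{-1}\psi$, restricted to the free summand $Ax_{k+1}$, in order to append further reflections that fix this summand. Reiter's hypotheses (1c'), (2a'), (2c') must be verified: (1c') holds because $P$ and $Q_{k+1}$ are free; (2c') is handled exactly as in the semiperfect proof using freeness and the standing conditions (1), (2) of Theorem~\ref{TH:gen-by-reflections}; and (2a') holds as long as $Q_{k+1}^\perp$ retains enough rank in each split-orthogonal component, which it does for all $k < n-1$.

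The main obstacle is the terminal step $k = n-1$, where $Q_{n-1}^\perp$ may become one-dimensional in some split-orthogonal component and (2a') can fail. At this point $\rho_m^{-1}\psi$ fixes $Q_{n-1}$ pointwise, which sharply constrains its form, and the hypothesis $\Delta_\calI(\psi) = 0$ together with the tracked Dickson contributions of $r_1, \dots, r_m$ forces this residual isometry to lie in $\ker \Delta_\calI$. A parity argument parallel to Lemma~\ref{LM:prod-of-two-refl}---in which the freeness of $P$ is used to produce pairs of quasi-reflections of the problematic idempotent type that recombine into honest reflections, while in the non-split-orthogonal components one inserts copies of $\id_P$ realized as quasi-reflections as in Lemma~\ref{LM:reflection-existence-I}(i)---then forces the residual isometry to be itself a product of ordinary reflections, completing the decomposition of $\psi$.
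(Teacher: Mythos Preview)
Your inductive approach has a genuine gap at the core step. To pass from $Q_k$ to $Q_{k+1}$ you need the new reflections furnished by Reiter's theorem to fix $Q_k$ pointwise; otherwise $\rho_{m'}$ no longer agrees with $\psi$ on $Q_k$ and the induction collapses. Forcing this requires taking $V\subseteq Q_k^{\perp}$ in Reiter's setup, but then condition~(2a$'$) asks that $[\beta_i|_{(Ax_{k+1})_i^{\perp}\cap (Q_k)_i^{\perp}}]\neq[0]$ in every split-orthogonal component, which is \emph{not} the same as ``$Q_{k+1}^{\perp}$ has enough rank'' and can already fail for small $k$ (e.g.\ if the first few basis vectors span a hyperbolic summand in some component). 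Moreover, condition~(1a) for this $V$ (that $L_{Ax_{k+1}}(Q_k^{\perp})=(Ax_{k+1})^*$) is not automatic either. Finally, your terminal step is not a proof: knowing that the residual isometry fixes $Q_{n-1}$ and lies in $\ker\Delta_{\calI}$ does not by itself exhibit it as a product of reflections, and Lemma~\ref{LM:prod-of-two-refl} only converts products of \emph{quasi-reflections} into products of reflections --- you have no such expression for the residual.

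The paper avoids all of this with a single clean reduction. Since $\quo{A}=A/\Jac(A)$ is semisimple, it is in particular semiperfect, so Theorem~\ref{TH:gen-by-reflections}(ii) applies verbatim to $(\quo{P},[\quo{\beta}])$: the induced isometry $\quo{\psi}$ is a product of reflections of $(\quo{P},[\quo{\beta}])$. Each of these lifts to a reflection of $(P,[\beta])$ (lifting a unit of $\quo{A}$ to $\units{A}$ needs no idempotent lifting), giving $\psi'\in O'(P,[\beta])$ with $\quo{\psi'}=\quo{\psi}$. It then suffices to show that $\psi'^{-1}\psi$, which satisfies $\psi'^{-1}\psi(x)-x\in P\Jac(A)$ for all $x$, is a product of reflections; this is exactly what the argument inside the proof of \cite[Th.~6.2]{Reiter75} establishes for free $P$ over a semilocal ring. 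The reduction modulo $\Jac(A)$ is the missing idea: it replaces your delicate induction on basis vectors by a one-line appeal to the already-proved semiperfect case, and isolates the genuinely semilocal difficulty into the ``close to identity'' situation where Reiter's hypotheses are automatically satisfied.
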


    \begin{proof}
        By Proposition~\ref{PR:reflection-existence-II}, $(\quo{P},[\quo{\beta}])$ admits a reflection,
        which can lifted to a reflection $s$ of $(P,[\beta])$. That reflection satisfies $\Delta_{\calI}(s)=\xi$
        by Proposition~\ref{PR:Dickson-inv-of-reflections}. This proposition also
        implies that $O'(P,[\beta])\subseteq\Delta_{\calI}^{-1}(\{0,\xi\})$,
        so again, it is left to prove that $\ker \Delta_{\calI}\subseteq O'(P,[\beta])$.
        Let $\psi\in\ker\Delta_{\calI}$, and let $\quo{\psi}$ be the isometry
        it induces on $(\quo{P},[\quo{\beta}])$ (namely, $\quo{\psi}(\quo{x})=\quo{\psi x}$).
        Arguing as in the proof of Theorem~\ref{TH:gen-by-reflections},
        we see that $\quo{\psi}$ is   a product of reflections of $(\quo{P},[\quo{\beta}])$.
        These reflections can be lifted to reflections of $(P,[\beta])$, and their product
        is an isometry $\psi'\in O(P,[\beta])$ such that $\quo{\psi}=\quo{\psi'}$.
        Replacing $\psi$ with $\psi'^{-1}\psi$, we may assume $\quo{\psi}=\id_{\quo{P}}$,
        or rather, $\psi x-x\in P\Jac(A)$ for all $x\in P$.
        Now, it is shown in the proof of \cite[Th.~6.2]{Reiter75} that such $\psi$ is a product
        of reflections (here we need $P$ to be free), so we are done.
        (Notice that  \cite[Th.~6.2]{Reiter75} assumes that $[\beta_i]\neq 0$ whenever
        $(A_i,\sigma_i,u_i,\Lambda_i)$ is  split-orthogonal, but the argument
        that we have counted on only needs $[\beta_i]\neq 0$ when
        $(A_i,\sigma_i,u_i,\Lambda_i)$ is \emph{odd} split-orthogonal. Also note that $[\beta_i]\neq 0$
        $\iff$ $P_i\neq 0$ because $(P_i,[\beta_i])$ is unimodular.)
    \end{proof}

    \begin{example}
        Part (i) of Theorem~\ref{TH:gen-by-reflections} may fail when $A$ is only assumed to be semilocal.
        For example, take $A$ to be a non-local semilocal commutative domain with $2\in\units{A}$,  set $\sigma=\id_A$,
        $u=1$, $\Lambda=0$, and define $\beta:A\times A\to A$ by $\beta(x,y)=xy$.
        It is easy  to check that $O(A,[\beta])=\{\pm \id_A\}$. However, $|\calI|>1$ because $\quo{A}$ is not a field,
        and hence $\Delta_\calI$ cannot be onto.
    \end{example}

\bibliographystyle{plain}
\bibliography{MyBib}

\end{document}